\documentclass[11pt,reqno,letterpaper]{amsart}
\usepackage[all]{xy}
\usepackage{hyperref}
\usepackage{amsfonts,mathrsfs,bbm,latexsym,rawfonts,amsmath,amssymb,amsthm,epsfig}
\usepackage{setspace, color}
\usepackage{cases}
\usepackage{todonotes}

\newtheorem{thm}{Theorem}[section]
\newtheorem{cor}[thm]{Corollary}
\newtheorem{rem}[thm]{Remark}
\newtheorem{lem}[thm]{Lemma}

\numberwithin{equation}{section}

\newcommand{\al}{\alpha}

\newcommand{\Ld}{\Lambda}
\newcommand{\de}{\delta}
\newcommand{\De}{\Delta}
\newcommand{\ep}{\varepsilon}

\newcommand{\si}{\sigma}
\newcommand{\om}{\omega}
\newcommand{\Om}{\Omega}
\newcommand{\ga}{\gamma}
\newcommand{\Ga}{\Gamma}
\newcommand{\ka}{\kappa}
\renewcommand{\th}{\theta}


\newcommand{\F}{\mathcal{F}}
\newcommand{\G}{\mathcal{G}}
\renewcommand{\P}{\mathcal{P}}

\renewcommand{\S}{\mathscr{S}}
\newcommand{\g}{\mathfrak{g}}

\newcommand{\U}{\mathbb{S}}

\DeclareMathOperator{\tr}{tr}


\newcommand{\Real}{\mathbb{R}}

\newcommand{\norm}[1]{\Vert#1\Vert}

\def\<{\left\langle} \def\>{\right\rangle}
\def\({\left(} \def\){\right)}
\newcommand{\n}{\nabla}
\newcommand{\p}{\partial}

\newcommand{\A}{\mathscr{A}}
\renewcommand{\S}{\mathscr{S}}



\begin{document}
\title[]{Isolated singularities of 3-dimensional Yang-Mills-Higgs fields}
\author[B. Chen]{Bo Chen}
\address{School of Mathematics, South China University of Technology, Guangzhou, 510640, People's Republic of China}
\email{cbmath@scut.edu.cn}

\author{Chong Song}
\address{School of Mathematical Sciences, Xiamen University, Xiamen, 361005, People's Republic of China}
\email{songchong@xmu.edu.cn}

\begin{abstract}
In this paper, we derive decay estimates near isolated singularities of 3-dimensional (3d) Yang-Mills-Higgs fields defined on a fiber bundle, where the fiber space is a compact Riemannian manifold and the structure group is a connected compact Lie group. As an application, we obtain removable singularity theorems for 3d Yang-Mills-Higgs fields under different types of energy conditions, which generalizes classical removable singularity theorems for 3d Yang-Mills fields~\cite{S84,SS84} and 3d harmonic maps~\cite{L85}.
\end{abstract}
\maketitle
\section{Introduction}
\subsection{The Yang-Mills-Higgs functional and backgrounds}
Let $G$ be a compact connected Lie group and $\g$ be its Lie algebra. Let $(N,h)$ be a Riemannian manifold, which supports a left action of $G$ preserving the metric $h$. Let $(\P,\pi)$ be a $G$-principal bundle over a Riemannian manifold $(M, g)$ and $(\F=\P\times_{G}N, \pi_{\F})$ be the associated Higgs bundle. Let $V: N \to \Real$ be a $G$-invariant function, which gives rise to a so-called Higgs potential, i.e. a gauge invariant function defined on the bundle $\F$.

For a smooth connection $A$ on $\P$ and a smooth section $\Phi$ of $\F$,  the Yang-Mills-Higgs (YMH) functional is defined by
\begin{equation}\label{YMHF0}
	\mathscr{E}(A,\Phi)=\int_{M}|F_A|^2d\mu_g+\int_{M}|\n_A \Phi|^2d\mu_g+\int_{M}V(\Phi)d\mu_g,
\end{equation}
where $F_A$ and $\n_A$ are the curvature and  the covariant exterior derivative induced by the connection $A$ respectively, and $d\mu_g$ is the volume element of $M$.
A Yang-Mills-Higgs field $(A,\Phi)$ is a critical point of the YMH functional, which solves the following elliptic system
\begin{equation}\label{eq-ymh0}
\begin{cases}
D^*_A F_A=-\Phi^* \n_A\Phi, \\[1ex]
\n_A^* \n_A\Phi=-\n V(\Phi),
\end{cases}
\end{equation}
where $D^*_A$ is the dual operator of the extrinsic derivative $D_A$ and the term $\Phi^*\n_A\Phi$ lies in the dual space of $\Om^1(\P\times_{ad}\g)$, namely, for all $B\in \Om^1(\P\times_{ad}\g)$, we have
\[ \<\Phi^*\n_A\Phi, B\>=\<\n_A\Phi, B\Phi\>.\]
Here $B\Phi$ is induced by the infinitesimal action of $\g$-coefficients of $B$ on $\Phi$.

The YMH functional, originating from electromagnetic phenomena research, plays a fundamental role in the classical gauge theory and in particle physics. YMH fields illuminate diverse gauge field models, including the Ginzburg-Landau model in superconductivity theory and the Chern-Simons model in quantum physics, depending on bundles and Higgs potentials. See for example \cite{Taub,M97,EH,EH1}.

Mathematically, YMH fields provide a unified framework, generalizing both classical Yang-Mills (YM) fields and harmonic maps. Specifically, YMH fields reduce to pure YM fields when the fiber space collapses to a point and to harmonic maps from the base manifold $M$ to the fiber space $N$ when the structure group $G$ is trivial. In the context of symplectic fibrations on a surface, minimizing YMH fields are known as symplectic vortices and their moduli space serves as a tool for defining invariants on symplectic manifolds with Hamiltonian actions, as discussed in  \cite{CGS00, Mundet00, Mundet-Tian, S16}. The well-known Hitchin's model~\cite{Hitchin87} arises when $\F=\Om^1(ad\P)$.

The investigation of removable singularities of YM fields and YMH fields began when Uhlenbeck established the fundamental compactness results of YM fields in \cite{U82,U85}. In \cite{U82'}, Uhlenbeck showed that a 4d YM connection with a point singularity is gauge equivalent to a smooth connection if its YM energy is finite. Later R{\aa}de \cite{Ra93} gave two different proofs of the removability of isolated singularities for 4-dimensional YM fields. Parker \cite{Parker1} extended this result to YMH fields with scalar and spinor fields.

In dimension 3, YM fields with finite energy in the entire space are trivial~\cite{Taub}. For dimensions greater than 4, there exist finite-energy YM fields with non-removable singularities~\cite{U82}. This suggests that finite YM energy is an inadequate criterion for removing point singularities in dimensions other than 4. A natural candidate in dimension $n$ is a finite $L^\frac{n}{2}$-norm of curvature, which is a conformally invariant functional. This was verified in dimension 3 by Sibner~\cite{S84} and in dimension greater than 4 by Sibner and Sibner~\cite{SS84}. They also extended this result to YMH fields~\cite{OSib}.

Point singularities of 2-dimensional YMH fields turned out to exhibit new features. In fact, codimension two singular sets of $n$-dimensional YMH fields are in general not removable due to possible non-trivial limiting holonomy supported at the singular points, see \cite{Smith,SS92,Ra94,Ra94'}. The analysis of higher dimensional singular sets of YMH fields becomes more complicated and often requires additional assumptions. For example, Tao and Tian~\cite{TT04} proved the removability of codimension 4 singularities of small energy stationary admissible YM fields. Smith and Uhlenbeck \cite{SU19} provided another proof of this result for YMH fields by using Kato inequalities for YM fields and elliptic regularity theory in Morrey spaces.

Note that all the above results on removable singularities of YMH fields require the coupling Higgs bundle being a vector bundle. When the fiber space is a Riemannian manifold, the equation of the Higgs field becomes non-linear and significantly affects the asymptotic behavior of YMH fields near singularities. Indeed, the study of singularities of harmonic maps~\cite{SU81, L85} clearly shows that new difficulties arise when the fiber space is curved. In this case, few results exists on the removability of singularities, even for isolated singularities. In our privious work \cite{CS21}, we established a sharp decay estimate for YMH fields with point singularities on a surface, leading to a theorem on removable isolated singularities for 2d YMH fields if the limiting holonomy is trivial. When the dimension exceeds 2, there is no limiting holonomy at isolated singularities, but the asymptotic behavior of the YMH fields and, in particular, the interaction between the connection and the Higgs field near the singular points remains unclear. We aim to establish decay estimates and removable singularity theorems for isolated singularities of 3d YMH fields in this paper and of higher dimensional YMH fields in another sequel.

\subsection{YMH fields with isolated singularities}
Now we  focus on 3d YMH fields with isolated singularities, where the fiber space is a compact Riemannian manifold. For local regularity theory, the curvature of the base manifold is not particularly important and we can set the base manifold to be flat for simplicity. Let $B_1\subset \Real^3$ denote the unit Euclidean ball and $B^*_1=B_1\setminus\{0\}$ denote the punctured ball. Let $\P$ be a trivial principal $G$-bundle over $B_1^*$ and $\F=\P\times_G N$ be the associated bundle with fiber modeled on a compact Riemannian manifold $N$. We denote the space of smooth sections by $\S=\Gamma(\F)$ , and the affine space of smooth connections on $\P$ by $\A$. Under a fixed trivialization of the bundles, we can identify a section $\Phi\in \S$ with a map $u:B_1^*\to N$ and a connection $A\in\A$ with a $\g$-valued 1-form. And the Higgs potential  is simply a gauge invariant smooth function $V: N\to \Real$.

Then the YMH functional (\ref{YMHF0}) becomes
\begin{equation}\label{eq-ymh1}
\mathscr{E}(A, u)=\int_{B_1^*}(|\n_A u|^2+|F_A|^2+V(u))dx,
\end{equation}
and a YMH field $(A,u)$ with isolated singularity at the origin is a solution of the following equation
\begin{equation}\label{eq:EL1}
	\begin{cases}
		D_A^*F_A=-u^*\n_Au,\\
		\n^*_A\n_A u=-\n V(u),
	\end{cases}
\end{equation}
defined in the punctured ball $B_1^*$.

Our main result gives an almost optimal decay estimate of 3d YMH fields near isolated singular points.

\begin{thm}\label{decay-YMH}
There exists an $\ep_0>0$ such that if $(A,u)$ is a YMH field  on $B^*_{R_0}\subset \Real^3$ with $R_0\leq 1$, which satisfies one of the following energy bounds
\begin{itemize}
\item[$(a)$] energy bound I:
\begin{equation}\label{Mor-c0}
	\sup_{B_{\rho(y)\subset B_{R_0}}}\frac{1}{\rho}\int_{B_{\rho}(y)}(|\n_A u|^2+|F_{A}|)dx\leq \ep^2_0,
\end{equation}	
\item[$(b)$] energy bound II:
\begin{equation}\label{Mor-c1}
\frac{1}{R_0}\int_{B_{R_0}}|\n_A u|^2dx+R_0^{\ka}\(\int_{B_{R_0}}(|F_A|^2+|\n V(u)|^2)dx\)^{\frac{1}{2}}\leq \ep^2_0,
\end{equation}
for some $\ka \in (0,\frac{1}{2})$;
\end{itemize}
then there is a $0<r_0\leq R_0/2$ such that $(A,u)$ satisfies
\begin{equation}\label{main-theorem-curvature}
  r^2\sqrt{|F_A(x)|^2+1}\leq C(\ep^2_0+r^2_0)\(\frac{r}{r_0}\)^{1+2\al},
\end{equation}
and
\begin{equation}\label{main-theorem-section}
  r\sqrt{|\n_A u(x)|^2 +1}\leq C\(\ep_0+r_0\)\(\frac{r}{r_0}\)^{\frac{8\al+1}{5}}
\end{equation}
for any $r=|x|\leq r_0$, where $\al=\sqrt{\frac{1}{4}-C(\ep^2_0+r^{2/3}_0)}$. Here the constant $C$ in \eqref{main-theorem-curvature} and \eqref{main-theorem-section} depends only on the geometry of $N$ in the case of energy bound I, and depends on the geometry of $N$ and $\ka$ for energy bound II.

\end{thm}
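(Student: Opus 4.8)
The plan is to control a few scale-invariant energy quantities and to move freely between energy and pointwise bounds through a small-energy regularity theorem. First I would establish such an $\ep$-regularity estimate: under either hypothesis $(a)$ or $(b)$, rescaling each dyadic annulus $B_{2r}\setminus B_{r/2}$ to unit size, fixing an Uhlenbeck-type Coulomb gauge so that $A$ is controlled in $W^{1,p}$, and then bootstrapping the elliptic system \eqref{eq:EL1}, one obtains
\[
 r^2\sup_{B_{3r/2}\setminus B_{2r/3}}|F_A| + r\sup_{B_{3r/2}\setminus B_{2r/3}}|\n_A u| \lesssim \Big(\frac{1}{r}\int_{B_{2r}\setminus B_{r/2}}(|\n_A u|^2+|F_A|^2)\,dx\Big)^{1/2} + (\text{lower order in } V).
\]
The smallness of the energy is exactly what rules out the harmonic-map bubbling obstruction caused by the curved target $N$, so this step is where the compactness of $N$ and the threshold $\ep_0$ enter.

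Next I would record the monotonicity/Pohozaev identities for the YMH system, obtained by contracting both equations of \eqref{eq:EL1} with the radial field $x\cdot\n$. Writing $E_\Phi(r)=\frac1r\int_{B_r}|\n_A u|^2$ and $E_F(r)=\frac1r\int_{B_r}|F_A|^2$, stationarity produces almost-monotonicity formulas in which $\frac{d}{dr}E_\Phi$ and $\frac{d}{dr}E_F$ dominate the radial-derivative energy on the spheres $\p B_r$, modulo error terms coming from the potential $V(u)$ and from the coupling $u^*\n_A u$. The point of this step is that in dimension $n=3$ these quantities are (almost) scale invariant, which is precisely why the $L^{3/2}$/Morrey-type smallness in \eqref{Mor-c0} and \eqref{Mor-c1} is the correct hypothesis rather than mere finiteness of $\mathscr E$.

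The heart of the argument is to upgrade monotonicity to genuine decay. After the substitution $r=e^{-t}$, which maps $B^*_{R_0}$ onto a half-cylinder $[T,\infty)\times S^2$, I expect the angular energy $g(t)$ of the pair on the cross-sectional sphere to satisfy a second-order differential inequality whose indicial equation has the root $\al=\sqrt{\frac14-C(\ep_0^2+r_0^{2/3})}$: the constant $\frac14$ is generated by the first-order damping term carrying the dimensional weight $n-2=1$, while the perturbation $C(\ep_0^2+r_0^{2/3})$ collects the coupling and the spectral defect of the linearized operator on $S^2$. Integrating this inequality gives power decay of the scaled curvature energy with exponent $1+2\al$, hence \eqref{main-theorem-curvature} after reinserting the $\ep$-regularity bound. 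Feeding this curvature decay back into the Higgs equation and iterating the two coupled inequalities yields the weaker Higgs exponent $\frac{8\al+1}{5}$ in \eqref{main-theorem-section}, the loss reflecting that the curvature enters the $u$-equation only through the quadratic coupling.

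The main obstacle, and the feature that distinguishes this from the vector-bundle results of \cite{S84,SS84,OSib}, is the genuine nonlinearity of the Higgs equation for a curved target together with its coupling to $A$: the two decay rates cannot be separated, so the cross term $u^*\n_A u$ and the potential error $\n V(u)$ must be controlled uniformly in the Pohozaev identity, and one must verify that the perturbation of the indicial root stays strictly below $\frac14$ so that $\al$ remains real and positive. Reconciling the two hypotheses is part of this difficulty: in case $(b)$ the curvature term is not scale invariant, and the factor $R_0^{\ka}$ in \eqref{Mor-c1} is introduced precisely to absorb it through the same differential-inequality machinery, which is also why the constant $C$ then depends on $\ka$.
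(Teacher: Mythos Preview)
Your outline has the right large-scale shape---$\ep$-regularity, cylindrical coordinates, a second-order differential inequality, and a comparison argument---but it misses the two technical ingredients that make the paper's proof work, and without them the scheme as written does not close.

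First, the differential inequality is not obtained from energy/Pohozaev identities for an ``angular energy'' as you suggest, but pointwise from Bochner formulas combined with \emph{improved Kato inequalities} for the YMH system (Theorem~\ref{Kato-ineq-4}):
\[
|\n_A F_A|^2\ge \tfrac{3}{2}|\n|F_A||^2-|u^*\n_A u|^2,\qquad
|\n_A\n_A u|^2\ge (\tfrac{3}{2}-\de)|\n|\n_A u||^2-C_\de(1+|F_A|^2).
\]
The constant $\tfrac32$ (rather than the trivial $1$) is exactly what allows one to work with $f=(|F_A|^2+1)^{1/4}$ and $g=(|\n_A u|^2+1)^{(1+2\de)/4}$, so that after the substitution $\bar f=e^{-t/2}f$ the indicial coefficient is $\tfrac14$. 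With only the standard Kato inequality one is forced to use $|F_A|$ itself, and then $\bar f$ grows like $e^{3t/2}$ under the $\ep$-regularity bound, which kills the comparison argument. So the $\tfrac14$ you identify is not purely dimensional: it also encodes the refined Kato constant, and this is the step you are missing.

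Second, even with the improved Kato inequality, the inequality $\p_t^2\bar f+\De_{\mathbb S^2}\bar f-\al^2\bar f\ge 0$ has $\al<\tfrac12$ strictly, while the $\ep$-regularity bound only gives $\bar f\lesssim e^{t/2}$. The comparison principle therefore yields nothing unless one first shows a small initial decay $\bar f\lesssim e^{(1/2-\ka_0)t}$ for some $\ka_0>0$ with $\tfrac12-\ka_0<\al$. The paper supplies this in two different ways: under energy bound~I by constructing Uhlenbeck's broken Coulomb gauge on dyadic annuli and proving an integral decay of $\int r^\al|F_A|^2$ (Lemmas~\ref{Bro-gauge}--\ref{decay-F1}); under energy bound~II by a monotonicity formula (Lemma~\ref{mono-formu}) that upgrades the $\ep$-regularity to $r^{3/2+\ka}|F_A|\le C\ep_0^2$ (Theorem~\ref{ep-3D}), so that the $L^2$ curvature hypothesis itself provides $\ka_0=\tfrac12(\tfrac12-\ka)$. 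Your proposal does not contain either mechanism; the Pohozaev identities you invoke are used only for the second of these, and only to control the Higgs energy, not to produce curvature decay directly.
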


Here the energy bound I \eqref{Mor-c0} is a natural confomally invariant Morrey norm, while the energy bound II \eqref{Mor-c1} only requires an energy bound on a disk of fixed size $R_0$, which is more convenient for applications. Indeed, the second part of energy bound II, i.e. the bound on the curvature and the Higgs potential can be easily achieved provided the YMH energy of the YMH field is bounded and $R_0$ is sufficiently small.

As a by product, we also obtain the following decay estimates for pure Yang-Mills fields and pure harmonic maps in the vicinity of isolated singularities, as both are specific instances of YMH fields. There estimates are sharp and parallel to the well-known results for 4d YM fields~\cite{U82'} and 2d harmonic maps~\cite{SU81}.

\begin{thm}\label{decay-pureYM-and-PureHM}
\begin{itemize}
\item[$(1)$] There exists an $\ep_1>0$ such that if $A$ is a smooth Yang-Mills connection on $B^*_{R_0}\subset \Real^3$, which satisfies the energy bound
\[\sup_{B_{\rho(y)\subset B_{R_0}}}\(\frac{1}{\rho}\int_{B_{\rho}(y)}|F_A|dx\)\leq \ep^2_1,\]
then
\[r^2|F_A(x)|\leq C\ep^2_1 \(\frac{r}{R_0}\)^{2}\]
for any $r=|x|\leq R_0/2$;

\item[$(2)$] There exists an $\ep_2>0$ such that if $u$ is a smooth harmonic map $u$ on $B^*_{R_0}\subset \Real^3$, which satisfies the energy bound
\[\frac{1}{R_0}\int_{B_{R_0}}|\n u|^2dx\leq \ep^2_2,\]
then
\[r|\n u(x)|\leq C\ep_2\frac{r}{R_0}\]
for any $0<r\leq R_0/2$.
\end{itemize}
\end{thm}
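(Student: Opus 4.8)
The plan is to treat each statement as a degenerate instance of Theorem~\ref{decay-YMH} and then to sharpen the exponent. For the Yang--Mills assertion $(1)$ I let the fiber $N$ collapse to a point, so that $u$ and all its couplings disappear and the field equation reduces to $D_A^*F_A=0$; for the harmonic map assertion $(2)$ I take the structure group $G$ trivial, so that $A\equiv 0$, $F_A\equiv 0$, and $u$ is an honest harmonic map into $N$. In both situations Theorem~\ref{decay-YMH} already applies and delivers decay with exponents $1+2\al$ and $\frac{8\al+1}{5}$. The difficulty is that for a field of fixed (small but nonzero) energy one has $\al=\sqrt{\tfrac14-C(\ep_0^2+r_0^{2/3})}<\tfrac12$ strictly, so these exponents fall just short of the sharp values $2$ and $1$. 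The whole point of the decoupled setting is that the perturbation $C(\ep_0^2+r_0^{2/3})$, which originates from the mutual feedback between $A$ and $u$, is no longer forced upon us, and the core estimate may be re-run with the effective indicial constant restored to its ideal value $\al=\tfrac12$.

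For part $(1)$ I would start from the $\ep$-regularity theorem attached to the conformally invariant Morrey norm $\frac1\rho\int_{B_\rho}|F_A|$, in the spirit of Sibner~\cite{S84}, which converts the smallness hypothesis into the scale-invariant a priori bound $\rho^2|F_A|\le C\ep_1$ on every ball $B_\rho(x)\subset B_{R_0}^*$ with $\rho\sim|x|$. Next, the Weitzenb\"ock formula for a Yang--Mills connection together with Kato's inequality yields $\De|F_A|\ge -c|F_A|^2$ away from the zeros of $F_A$, so that $|F_A|$ is a subsolution of $\De w+Vw=0$ with potential $V=c|F_A|\le cC\ep_1/r^2$. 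A radial barrier $r^{\be}$ for this Hardy-type operator gives $|F_A|\le Cr^{\be}$ with $\be=\frac{-1+\sqrt{1-4cC\ep_1}}{2}$, which is exactly the near-sharp rate (one checks $2+\be=1+2\al$, so $\be\to0$ only as $\ep_1\to0$). To reach the sharp $\be=0$, i.e. the uniform bound $|F_A|\le C\ep_1^2/R_0^2$ claimed in $(1)$, I would use a monotonicity formula for $\rho\int_{B_\rho(0)}|F_A|^2$ to show that under small energy the blow-up limit of $A$ at the origin is a homogeneous Yang--Mills connection on the cone over $\mathbb{S}^2$ whose energy must vanish; the resulting decay of the local Morrey norm, fed back into $\ep$-regularity, pins the leading homogeneity and removes the spurious negative power.

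For part $(2)$ the analogous engine is the Schoen--Uhlenbeck $\ep$-regularity~\cite{SU81} together with Price's monotonicity formula for $\th(\rho):=\frac1\rho\int_{B_\rho(0)}|\n u|^2$. Smallness of $\th$ forces the tangent map at the origin to be constant, since any nonconstant homogeneous harmonic map from $\Real^3$ into $N$ carries a definite amount of energy; linearizing about this constant map replaces the Higgs equation by the flat Laplacian, whose first nonzero eigenvalue on the link $\mathbb{S}^2$ is $\ell(\ell+1)|_{\ell=1}=2$. This eigenvalue is precisely the source of the sharp exponent: it yields the energy decay $\th(\rho)\le C\ep_2^2(\rho/R_0)^2$, which combined with $\ep$-regularity gives $|\n u|\le C\ep_2/R_0$ and hence the stated bound $r|\n u(x)|\le C\ep_2\,r/R_0$.

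The main obstacle in both parts is the upgrade from the near-sharp exponent to the sharp one, equivalently from $\al<\tfrac12$ to $\al=\tfrac12$. This is where the decoupling must be used quantitatively: one has to verify that, once the cross terms are gone, the remaining nonlinearities---the quadratic self-interaction $|F_A|^2$ for Yang--Mills and the target second fundamental form for harmonic maps---are genuinely subcritical and do not shift the indicial root that governs the decay. Concretely, this reduces to establishing triviality of the tangent cone, respectively of the tangent map, under the smallness hypothesis, and then proving a gap (or three-annulus) estimate showing that the local energy decays at the rate dictated by the first eigenvalue rather than by the perturbed one. Once this is in place, the sharp bounds follow from the scaling of $\rho^2|F_A|$ and $\rho|\n u|$, and the regularization constant $+1$ present in Theorem~\ref{decay-YMH} can be dropped.
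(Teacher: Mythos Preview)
Your outline is plausible but takes a different, and heavier, route than the paper. The paper does \emph{not} invoke tangent cones, tangent maps, or Simon--type uniqueness to push $\al$ up to $\tfrac12$. Instead it exploits the fact that in the decoupled setting the Kato inequality becomes \emph{exact}: for pure Yang--Mills $F_A$ is a harmonic $2$-form so $|\n_AF_A|^2\ge\tfrac32|\n|F_A||^2$ with no error, and for harmonic maps $du$ is a harmonic $1$-form so $|\n du|^2\ge\tfrac32|\n|\n u||^2$. This lets one work directly with $|F_A|^{1/2}$ and $|\n u|^{1/2}$ (no $+1$ regularization). In cylindrical coordinates, $g=e^{-t/2}|F_A|^{1/2}$ satisfies $\p_t^2g+\De_{\mathbb S^2}g\ge(\tfrac14-Ce^{-2t}|F_A|)g$. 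A first comparison with $\al^2=\tfrac14-C\ep_1^2$ yields the near-sharp rate; feeding this back makes the potential $Ce^{-2t}|F_A|$ exponentially decaying in $t$, so the inequality becomes $\p_t^2g+\De_{\mathbb S^2}g-\tfrac14 g\ge -(\text{decaying forcing})$, and a second comparison with the \emph{exact} constant $\tfrac14$ and an explicit particular solution gives the sharp exponent. The same two-step bootstrap handles harmonic maps. This is entirely elementary ODE comparison --- no blow-up analysis is needed.

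Two specific issues with your sketch. First, your barrier computation for $|F_A|$ using only the ordinary Kato inequality gives indicial roots $\be_\pm=\tfrac{-1\pm\sqrt{1-4c\ep_1}}{2}$, but the a~priori bound $|F_A|\le C\ep_1^2/r^2$ is \emph{worse} than both $r^{\be_\pm}$ near the origin, so the annular comparison does not close as $T\to\infty$; some initial decay is required. The paper supplies this via Uhlenbeck's broken Coulomb gauge (Lemma~\ref{Bro-gauge} and Lemma~\ref{decay-F1}), which gives $r^2|F_A|\le C\ep_1^2(r/R_0)^{2\kappa_0}$ for a fixed $\kappa_0\ge\tfrac{1}{16}$ --- an ingredient you do not mention. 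Second, your claimed identity $2+\be=1+2\al$ is not correct; the improved Kato exponent $\tfrac32$ is what produces the rate $1+2\al$, and it is essential for the bootstrap. Your tangent-cone route for part~(1) and tangent-map route for part~(2) could in principle be made to work, but they require monotonicity, a gap theorem, and a quantitative decay mechanism (\L ojasiewicz or three-annulus), which is substantially more machinery than the paper's direct argument.
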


As an application of Theorem \ref{decay-YMH}, we get the following theorem of removable isolated singularities of 3d YMH fields.

\begin{thm}\label{remov-iso-singu}
There exists an $\ep_0>0$ such that if $(A,u)$ is a smooth YMH field on $B^*_{1}\subset \Real^3$, which satisfies energy bound I \eqref{Mor-c0} or energy bound II \eqref{Mor-c1} with some $R_0\leq 1$, then $(A,u)$ extends to a smooth YMH field on $B_{1}$.
\end{thm}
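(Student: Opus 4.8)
The plan is to use the decay estimates of Theorem~\ref{decay-YMH} to remove the puncture by a combination of gauge fixing and elliptic bootstrapping, treating the connection and the Higgs field in turn. I work on a small punctured ball $B_{r_0}^*$ on which Theorem~\ref{decay-YMH} applies, and I choose $\ep_0$ (hence $r_0$) small enough that $\al=\sqrt{\tfrac14-C(\ep_0^2+r_0^{2/3})}$ is close to $\tfrac12$. The pointwise bounds \eqref{main-theorem-curvature} and \eqref{main-theorem-section} then give $|F_A(x)|\le C|x|^{2\al-1}$ and $|\n_A u(x)|\le C|x|^{(8\al-4)/5}$, so both $F_A$ and $\n_A u$ lie in $L^p(B_{r_0})$ for some $p>3$: the radial integrals converge once $p<\tfrac{3}{1-2\al}$, and this bound tends to $\infty$ as $\al\to\tfrac12$. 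This is the crucial gain --- the conformally critical datum is upgraded to a supercritical one ($L^p$ with $p>3=n$).

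Next I would fix a good gauge for the connection. Since $\P$ is trivial over $B_1^*$ by hypothesis there is no topological obstruction from $\pi_1(G)$ on the link $S^2$, so the problem is purely analytic. Applying Uhlenbeck's gauge-fixing theorem on each dyadic annulus $B_{2r}\setminus B_{r/2}$ and patching the resulting Coulomb gauges --- using the curvature decay to control the transition functions and to force their convergence as $r\to0$ --- I obtain a gauge over $B_{r_0}^*$ in which the transformed connection $\tilde A$ satisfies $d^*\tilde A=0$ and the scale-invariant elliptic estimate $\|\tilde A\|_{W^{1,p}}\lesssim\|F_A\|_{L^p}$ on each annulus. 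A scaling argument shows $|\tilde A(x)|\lesssim|x|^{2\al}\to0$, and summing the annular estimates gives $\tilde A\in W^{1,p}(B_{r_0}^*)$. Since a single point is removable for bounded $W^{1,p}$ functions, $\tilde A$ extends to a $W^{1,p}$ connection on $B_{r_0}$. In the same gauge $|\n u|\le|\n_A u|+C|\tilde A|\in L^p$, so $u\in W^{1,p}(B_{r_0}^*)$ extends across the origin as a $W^{1,p}$, and hence (because $p>3$) Hölder continuous, map into $N$.

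Finally I would bootstrap. With $\tilde A,u\in W^{1,p}\hookrightarrow C^{0,\gamma}$, $\gamma=1-3/p$, the system \eqref{eq:EL1} in Coulomb gauge becomes a pair of elliptic equations for $\tilde A$ and $u$ whose right-hand sides --- $u^*\n_A u$, $\n V(u)$, and the quadratic connection terms --- are controlled in successively better spaces using the smoothness of $V$ and the compactness of $N$. Standard $L^p$ and Schauder estimates then promote $(\tilde A,u)$ to $C^\infty$ on $B_{r_0}$, and gluing with the given smooth solution on $B_1^*$ yields a smooth YMH field on $B_1$ (gauge-equivalent to the original). I expect the gauge step to be the main obstacle: producing a single Coulomb gauge valid on the entire punctured ball with uniform $W^{1,p}$ control up to the origin requires patching Uhlenbeck gauges across infinitely many dyadic annuli together with a careful use of the curvature decay to ensure that the accumulated transition functions converge, and the nonlinear coupling $u^*\n_A u$ must be carried through as a lower-order source at every stage.
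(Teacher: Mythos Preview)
Your proposal is correct and follows the same overall architecture as the paper: use the decay estimates of Theorem~\ref{decay-YMH} to place $F_A$ and $\n_A u$ in a supercritical $L^p$ space, fix a Coulomb gauge in which the connection is $W^{1,p}$ across the origin, and then bootstrap the elliptic system~\eqref{eq:EL1}. The difference lies almost entirely in the gauge step.

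You propose to build the Coulomb gauge by hand, patching Uhlenbeck gauges across infinitely many dyadic annuli and using the curvature decay to control the accumulated transition functions. The paper bypasses this construction by invoking a ready-made result of Smith and Uhlenbeck (Lemma~\ref{coulomb-gauge} here): once $F_A\in L^p(B_{r_0})$ with small norm for some $p>n=3$, and the singular set has finite $(n-3)$-Hausdorff measure (a single point qualifies), there is a gauge in which $\bar A\in W^{1,p}_{\mathrm{loc}}(B_{r_0})$ with $d^*\bar A=0$ and $\|\bar A\|_{W^{1,p}}\lesssim\|F_A\|_{L^p}$. This absorbs exactly the step you flag as the main obstacle. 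The paper also chooses $p>6$ rather than just $p>3$, so that after a single $L^p$ elliptic step the pair lands in $W^{2,p/2}\hookrightarrow C^{1,1-6/p}$, shortening the bootstrap. Your route would work but is longer; the paper's is cleaner because the hard gauge-theoretic input has already been packaged as a lemma.
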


In particular, the removable singularity theorem holds for 3d YMH fields with bounded conformally-invariant energy.
\begin{cor}\label{remov-iso-singu1}
Let $(A,u)$ be a smooth YMH field on $B^*_1\subset \Real^3$ with bounded conformally invariant energy
\begin{equation}\label{conf-en}
\int_{B_{1}}|\n_A u|^3dx+\int_{B_{1}}|F_A|^{\frac{3}{2}}dx<\infty.
\end{equation}
Then $(A,u)$ extends to a smooth YMH field on $B_{1}$.
\end{cor}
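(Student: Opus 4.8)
The plan is to reduce the corollary to Theorem~\ref{remov-iso-singu} by showing that the finite conformally invariant energy \eqref{conf-en} forces the Morrey-type energy bound I \eqref{Mor-c0} to hold on a sufficiently small ball around the origin. Since Theorem~\ref{remov-iso-singu} already delivers smooth extendability once \eqref{Mor-c0} holds with $\ep_0^2$ below the threshold, the only work is to verify that the scale-invariant densities in \eqref{Mor-c0} can be made arbitrarily small at small scales.

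First I would convert the $L^3$ and $L^{3/2}$ norms into the Morrey densities of \eqref{Mor-c0} using H\"older's inequality. For any ball $B_\rho(y)\subset B_{R_0}$, writing $|\n_A u|^2=|\n_A u|^2\cdot 1$ and applying H\"older with conjugate exponents $3/2$ and $3$ gives
\[
\int_{B_\rho(y)}|\n_A u|^2\,dx\le\(\int_{B_\rho(y)}|\n_A u|^3\,dx\)^{2/3}|B_\rho(y)|^{1/3}\le C\rho\(\int_{B_{R_0}}|\n_A u|^3\,dx\)^{2/3},
\]
where I used $|B_\rho(y)|=\tfrac{4}{3}\pi\rho^3$ and $B_\rho(y)\subset B_{R_0}$. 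The identical argument applied to $|F_A|$ against its $L^{3/2}$ bound yields
\[
\int_{B_\rho(y)}|F_A|\,dx\le C\rho\(\int_{B_{R_0}}|F_A|^{3/2}\,dx\)^{2/3}.
\]
Dividing by $\rho$ and adding, these two estimates bound the left-hand side of \eqref{Mor-c0} uniformly over all sub-balls $B_\rho(y)\subset B_{R_0}$ by a constant multiple of $\(\int_{B_{R_0}}|\n_A u|^3\)^{2/3}+\(\int_{B_{R_0}}|F_A|^{3/2}\)^{2/3}$.

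Next I would invoke absolute continuity of the Lebesgue integral. The hypothesis \eqref{conf-en} guarantees $|\n_A u|^3,|F_A|^{3/2}\in L^1(B_1)$, so the tail integrals $\int_{B_{R_0}}|\n_A u|^3\,dx$ and $\int_{B_{R_0}}|F_A|^{3/2}\,dx$ tend to $0$ as $R_0\to 0$. Hence, given the threshold $\ep_0$ furnished by Theorem~\ref{remov-iso-singu}, I can choose $R_0\le 1$ small enough that the Morrey density in \eqref{Mor-c0} is at most $\ep_0^2$. Theorem~\ref{remov-iso-singu} then applies on $B^*_{R_0}$ and shows that $(A,u)$ extends to a smooth YMH field across the origin.

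There is no substantive obstacle here: the corollary is a soft consequence of the main removable singularity theorem, and the sole content is the elementary fact that finite conformally invariant ($L^3$/$L^{3/2}$) energy in dimension three makes the scale-invariant Morrey density vanish at small scales. The only point requiring mild care is that the H\"older estimate must hold \emph{uniformly} over all sub-balls $B_\rho(y)\subset B_{R_0}$; this is automatic, since each local tail is dominated by the single integral over $B_{R_0}$, so no covering or iteration argument is needed.
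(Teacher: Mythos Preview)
Your proposal is correct and matches the paper's own approach: the paper states only that the corollary ``follows directly from H\"older's inequality and Theorem~\ref{remov-iso-singu}'' and omits the details, which are exactly the ones you supply. The one cosmetic slip is the phrase ``Theorem~\ref{remov-iso-singu} then applies on $B^*_{R_0}$''; as stated, Theorem~\ref{remov-iso-singu} concerns a YMH field on $B_1^*$ satisfying the energy bound for some $R_0\le 1$, so you should say it applies to $(A,u)$ on $B_1^*$ with the chosen $R_0$.
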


\begin{rem}
Theorem \ref{remov-iso-singu} and Corollary \ref{remov-iso-singu1}  generalizes the removable singularity theorems for 3d Yang-Mills fields in \cite{S84,SS84} and for 3d harmonic maps in \cite{L85} with finite conformally invariant $L^p$ energy. In fact, here we provide a new proof for these classical results.
\end{rem}

Here we outline the proof of the main Theorem \ref{decay-YMH}, as the rest results follows easily from Theorem \ref{decay-YMH}. The main idea of proof follows that of Smith-Uhlenbeck~\cite{SU19} which originated from R\aa{}de~\cite{Ra93}.

The key ingredient is a new improved Kato inequality for YMH fields $(A,u)$, which actually holds in all dimensions. In particular, in dimension 3 it has the following form:
\begin{align*}
|\n F_A|^2\geq& \frac{3}{2}|\n |F_A||^2-|u^*\n_A u|^2,\\	
|\n_A \n_A u|^2\geq &(\frac{3}{2}-\de)|\n |\n_Au||^2-C_\de(1+|F_A|^2)
\end{align*}
for $\de\in (0, \frac{1}{4})$, where $C_\de$ is a constant depending only on $\de$. The above Kato inequalities improve those derived by Smith and Uhlenbeck \cite{SU19} and are almost optimal in the sense that $\de$ can be arbitrarily small.

Equipped with the above Kato inequalities, we can derive a differential inequality for the YMH field by applying the Bochner formula. Actually, we find that the functions $(f, g):=\((|F_A|^2+1)^{\frac{1}{4}}, (|\n_A u|^2+1)^{\frac{1+2\de}{4}}\)$ satisfies
\begin{align}
\De f+C(|\n_A u|^2+f^2)f\geq 0,\label{eq0.1}\\
\De g+C(|F_A|^2+|\n_A u|^2+1)g\geq 0\label{eq0.2}.
\end{align}

Now suppose the YMH field satisfies the Morrey-type energy bound I, i.e. (\ref{Mor-c0}). It follows from standard $\ep$-regularity theorem that
\begin{equation}\label{eq-ep-reg}
  r^2(|\n_A u|^2+|F_A|)\le C\ep^2.
\end{equation}
To obtain the desired decay estimates for YMH fields from the coupled differential inequalities, we start by estimating the curvature $F_A$. By rewriting inequality \eqref{eq0.1} in cylindrical coordinates $(t=-\log|x|, \th)$, we get that the function $\bar{f}(t)=e^{-\frac12t}f(t)$ satisfies
\begin{equation}\label{ineq-f-bar}
  \p^2_t\bar{f}+\De_{\mathbb{S}^2}\bar{f}-\al^2\bar{f}\geq 0,
\end{equation}
where $\al=\sqrt{\frac{1}{4}-C(\ep^2_0+r^{2/3}_0)}$. But this is not sufficient to get the decay estimate of $|F_A|$ since $\al$ is strictly less than $1/2$. To overcome this difficulty, we need to invoke the broken Coulumb gauge constructed by Uhlenbeck~\cite{U82'} to show that $F_A$ actually enjoys an estimate stronger than (\ref{eq-ep-reg}). Namely, we have
\[e^{-2t}|F_A|(t,\th)\leq C\ep^2_0e^{-2\ka_0 t}\]
for a positive constant $\ka_0>0$, see Lemma~\ref{Bro-gauge}. This improved estimate together with the inequality (\ref{ineq-f-bar}) yields the desired estimate~(\ref{main-theorem-curvature}) for the curvature $F_A$. To prove the estimate of $\n_A u$, we combine the curvature estimate~(\ref{main-theorem-curvature}) and inequality \eqref{eq0.2} to show that the scaled function $\bar{g}(t)=e^{-\frac{1}{2}t}g(t)$ satisfies
\[\p^2_t \bar{g}+\De_{\mathbb{S}^2}\bar{g}-\al^2\bar{g}\geq -Ce^{-4\al (t-t_0)}\bar{g},\]
where $t_0=-\log r_0$. Then the decay estimate~(\ref{main-theorem-section}) follows from the comparison principal for ODEs. This finishes the proof of Theorem~\ref{decay-YMH} in the case of energy bound I.

For YMH fields satisfying energy bound II (\ref{Mor-c1}), the standard $\ep$-regularity theorem does not apply. Fortunately, we are still able to prove an improved $\ep$-regularity theorem (Theorem~\ref{ep-3D}) under energy bound II, which is in turn based on a new monotonicity inequality for YMH fields (see Lemma~\ref{mono-formu} below). The rest arguments are similar to the case of energy bound I.

\medskip

The rest of our paper is organized as follows. In Section \ref{s: pre}, we recall some preliminary results for YMH fields. In Section \ref{s:Ka-inq}, we derive the improved Kato inequalities for 1-forms and 2-forms on vector bundles, which yields an improved Kato inequality for 3D YMH fields. Next we prove our main Theorem \ref{decay-YMH} for YMH fields satisfying energy bound I in Section~\ref{s:ebound-I}  and for YMH fields satisfying energy bound II in Section~\ref{s:ebound-II}. The proof of Theorem \ref{decay-pureYM-and-PureHM} is given in Section~\ref{s:decay-pure}. Finally, we give the proof of Theorem \ref{remov-iso-singu} and Corollary \ref{remov-iso-singu1} in Section \ref{s: remov-singu}.

\medskip
\section{Preliminaries}\label{s: pre}
\subsection{Equations for YMH fields}
Let $G$ be a compact connected Lie group equipped with a bi-invariant metric and $\g$ be its Lie algebra. Let $(N,h)$ be a compact Riemannian manifold, which supports an left action of $G$ preserving the metric $h$. For any fixed $R>0$, let $B_R\subset \Real^n$ be the Euclidean ball with radius $R$ and $B^*_R=B_R\setminus\{0\}$ be the punctured ball. Let $\P$ be a trivial principal $G$-bundle over $B_R$ and $\F=\P\times_G N$ be the associated bundle with fiber $N$. Under a fixed trivialization, a section $\Phi$ of $\F$ can be identified with a map $u:B_R\to N$, while a connection $A\in\A$ is just a $\g$-valued 1-form.

For simplicity, let $\S:=C^\infty(B_R, N)$ denote the space of smooth sections of $\mathcal{F}$ and $\A=\Om^1(B_R,\g)$ denote the space of smooth connections. A connection $A\in \A$ induces an extrinsic derivative $D_A=d+A$ and the curvature of $A$ is defined by
\[F_A=D^2_A=dA+\frac{1}{2}[A,A].\]
The connection $A$ also induces a covariant derivative $\n_A=\n+A$ such that for any $u\in \S$ we have
\[\n_A u=\n u+A\cdot u,\]
where $\cdot$ denote the infinitesimal action of $\g$ on $N$.

Under a gauge transformation $s:B_R\to G$, a pair $(A,u)\in \A\times \S$ satisfies
$$s^{*}A=s^{-1}ds+s^{-1}As,\, s^{*}F_{A}=F_{s^{*}A}=s^{-1}F_{A}s,$$
and
$$s^*D_A u=D_{s^*A}s^*u=s^{-1}D_{A}u,\, V(s^*u)=V(u)$$
since the Higgs potential $V(u)$ is a gauge invariant. The Yang-Mills-Higgs functional \eqref{eq-ymh1} is invariant under the gauge transformation $s$, that is
$$\mathscr{E}(A,u)=\mathscr{E}(s^{*}A,s^{*}u).$$

A YMH field $(A,u)$ is a critical point of YMH functional $\mathscr{E}$, which satisfies the following equation
\begin{equation}\label{eq-YMH1}
	\begin{cases}
		D^{*}_{A}F_{A}=-u^{*}\n_{A}u,\\
		\n^{*}_{A}\n_{A}u=-\n V(u).
	\end{cases}
\end{equation}
We write equation \eqref{eq-YMH1} in a more explicit form. Recall that the infinitesimal action of $\g$ on $N$ is defined as follows. For $\forall c\in \g$, let $\varphi_{s}=\exp(s c): N\mapsto N$ be the $1$-parameter
group of isomorphism generated by $c$. Then
$$c\cdot y:=\frac{d}{ds}\Big|_{s=0}\varphi_s(y)=X_{c}(y),$$
where $X_c$ is a Killing vector field on $N$.
Similarly, $c$ acts on a vector field $Y\in \Ga(TN)$ by
$$c\cdot Y:= \frac{\nabla}{ds}\Big|_{s=0}(\varphi_{s})_{*}(Y)=\nabla_{Y} X_{c}.$$
where $\n$ is the Levi-Civita connection on $N$. Since the $G$-action preserves the metric $h$ on $N$,  $X_{c}$ is a Killing field, then $\nabla X_{c}$ is skew-symmetric, i.e.
$$h(\nabla _{Y}X_{c},Z)=-h(\nabla _{Z}X_{c},Y),\quad Y,\,Z \in \Ga(TN).$$
Then a direct calculation shows that equation \eqref{eq-YMH1} is equivalent to
\begin{equation}\label{eq-YMH2}
\begin{cases}
d^{*}dA+[A,dA] +[A,[A,A]]=-u^{*}(\n_{A}u),\\
\tau(u)-d^{*}A\cdot u+2A\cdot du+A\cdot(A\cdot u)=-\n V(u),
\end{cases}
\end{equation}
where $\tau(u)=\n^*\n u$ denote the tension field of map $u$.

For the purpose of PDE analysis, we also need an extrinsic form of \eqref{eq-YMH2}. First we recall that by the equivariant embedding theorem by Moore and Schlafly~\cite{MS}, there exists an isometric embedding
	$i:N\rightarrow\mathbb{R}^{K}$  and a representation $\rho:G\longrightarrow SO(K)$, such that $i(g.y)=\rho(g)i(y)$, for any $y\in N$ and $g\in G$. Under this representation, the Lie algebra $\mathfrak{g}$ corresponds to a sub-algebra of $\mathfrak{so}(K)$, i.e. the space of skew-symmetric
$K\times K$ matrices. Thus for any $c\in \mathfrak{g}$ and $y\in N \hookrightarrow \mathbb{R}^{K}$, the infinitesimal action of $c$ on $y$ is simply
$$
c\cdot y=X_{c}(y)=\rho(c) y.
$$
It follows that the action of $a$ on a vector field $Y\in \Ga(TN)$ is
$$
c\cdot Y=\nabla_{Y}X_{c}=(\rho(c)\cdot Y)^{\top}=\rho(c) Y-\Ga(y)(X_{c}, Y),
$$
where $\top$ denotes the projection from $\mathbb{R}^K$ to the tangent space of $N$ and $\Ga$ denotes the second fundamental for of $N$ in $\Real^K$.

Using these notations, we can rewrite equation \eqref{eq-YMH2} as
\begin{equation}\label{eq-YMH3}
	\begin{cases}
		d^{*}dA+[A,dA] +[A,[A,A]]=-u^{*}(\n_{A}u),\\
		\Delta u-d^{*}Au+2Adu+A(Au)=\Ga(u)(\n_Au,\n_Au)+\n V(u).
	\end{cases}
\end{equation}

\medskip
\subsection{Bochner formula for YMH fields}
Next we derive the Bochner formula for a YMH field $(A,u)$. Let $\{x^i\}$ be natural coordinates on $B_R$. For any $p\in B_R$, let $\{y^\al\}$ be normal coordinates at $u(p)\in N$. Then we have
\[\n_A u=\n u+A\cdot u=u^\al_idx^i\otimes\frac{\p}{\p y^\al},\]
where we denote the components
$$u^\al_i=\frac{\p y^\al(u)}{\p x^i}+(A_i)^\al_\beta y^\beta(u). $$

Let $R^N$ denote the curvature tensor of $N$. A simple calculation shows
\begin{equation}\label{bch1}
\begin{aligned}
(\De_A \n_A u)^\al_j=&\n_{A_i} \n_{A_i}u^\al_j=\n_{A_i}\n_{A_j} u^\al_i+\n_{A_i}(F_{ij}\cdot u)^\al\\
=&\n_{A_j}\n_{A_i}u^\al_i+R^{N}_{\ga zw\al}u^\ga_iu^z_ju^w_i
+(F_{ij}\cdot \n_{A_i}u)^\al\\
&+(\n_{A_i}F_{ij}\cdot u)^\al+(F_{ij}\cdot \n_{A_i}u)^\al\\
=&-(\n_A \n^*_A\n_A u)^\al_j+R^{N}_{\ga zw\al}u^\ga_iu^z_ju^w_i\\
&+2(F_{ij}\cdot \n_{A_i}u)^\al-(D^*_A F\cdot u)^\al_j.
\end{aligned}
\end{equation}
Since $(A,u)$ is a YMH field which satisfies \eqref{eq-YMH1}, we can derive from \eqref{bch1} that
\begin{equation}\label{bch2}
\begin{aligned}
\frac{1}{2}\De|\n_A u|^2=& |\n_A \n_A u|^2+|u^*(\n_Au)|^2\\
&+2F_A(\n_A u, \n_A u)+\n^2V(u)(\n_Au,\n_Au)\\
&+R^N\# \n_Au\# \n_Au\# \n_Au \#\n_Au\\
\geq &|\n_A \n_A u|^2+|u^*(\n_Au)|^2\\
&-C(1+|F_A|+|\n_A u|^2)|\n_A u|^2,		
\end{aligned}
\end{equation}
where the constant $C$ depends only on $\norm{\n^2 V}_{L^\infty}$ and $\norm{R^N}_{L^\infty}$.

For the curvature $F_A$, a direct computation gives
\begin{equation}\label{bch3}
	(-\n^*_A\n_A F_A)_{ij}=-(D_AD^*_AF_A)_{ij}+[(F_A)_{ik}, (F_A)_{kj}],
\end{equation}
where we have used the Bianchi identity $D_A F_A=0$. Hence by \eqref{eq-YMH1} and \eqref{bch3} we get
\begin{equation}\label{ineq2}
	\frac{1}{2}\De|F_A|^2\geq |\n_A F_A|^2+|F_A\cdot u|^2-C|F_A||\n_Au|^2-C|F_A|^3,
\end{equation}
where $C$ is a universal constant. In summary, we obtain

\begin{lem}\label{bochner}
There exists positive constants $a, b, c$ such that if $(A,u)$ is a YMH field on $B_R$, then the following holds
\begin{itemize}
\item [$(1)$] $|\n_A u|^2$ satisfies the differential inequality
\[\De |\n_A u|^2\geq -a(1+|F_A|)|\n_Au|^2-b|\n_A u|^4;\]
\item[$(2)$] $|F_A|$ satisfies the differential inequality
\[\De |F_A|\geq -a|F_A|^2-b|\n_A u|^2;\]
\item[$(3)$] Let $f=|\n_A u|^2+|F_A|$, we have
\[\De f\geq -c(1+f)f.\]
\end{itemize}
\end{lem}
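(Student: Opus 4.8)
The plan is to read all three inequalities directly off the two Bochner identities \eqref{bch2} and \eqref{ineq2} established immediately above, discarding the manifestly nonnegative terms and, in the case of the curvature, converting the inequality for $|F_A|^2$ into one for $|F_A|$ by means of a Kato inequality. None of this requires new geometry; the content is already packaged in \eqref{bch2} and \eqref{ineq2}.

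For part $(1)$ I would start from \eqref{bch2}. The terms $|\n_A\n_A u|^2$ and $|u^*(\n_A u)|^2$ on its right-hand side are nonnegative, so dropping them leaves
\[\tfrac12\De|\n_A u|^2\geq -C(1+|F_A|+|\n_A u|^2)|\n_A u|^2.\]
Multiplying by $2$ and splitting $(1+|F_A|+|\n_A u|^2)|\n_A u|^2=(1+|F_A|)|\n_A u|^2+|\n_A u|^4$ yields the claim with $a=b=2C$.

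For part $(2)$ the point is to descend from $|F_A|^2$ to $|F_A|$. On the open set where $|F_A|>0$ the function $|F_A|$ is smooth and the identity $\De|F_A|^2=2|F_A|\De|F_A|+2|\n|F_A||^2$ holds; combining it with \eqref{ineq2}, dropping $|F_A\cdot u|^2\geq0$, and invoking the elementary Kato inequality $|\n_A F_A|\geq|\n|F_A||$ makes the two gradient terms cancel, leaving
\[|F_A|\De|F_A|\geq -C|F_A||\n_A u|^2-C|F_A|^3.\]
Dividing by $|F_A|$ gives the stated inequality with $a=b=C$ on $\{|F_A|>0\}$. The only delicate point, and the main obstacle, is the behavior where $|F_A|$ vanishes, since there $|F_A|$ is merely Lipschitz. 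I would handle this in the standard way by replacing $|F_A|$ with the smooth regularization $(|F_A|^2+\ep^2)^{1/2}$, running the same computation, and letting $\ep\to0$, so that the conclusion holds weakly (equivalently in the distributional or viscosity sense) across the whole ball. This step is routine but is where all the genuine care lies.

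Finally part $(3)$ is a purely algebraic consequence of $(1)$ and $(2)$. Writing $f=|\n_A u|^2+|F_A|$ and adding the two differential inequalities gives a lower bound for $\De f$ that is a polynomial in $p:=|\n_A u|^2$ and $q:=|F_A|$ with nonnegative monomials. Comparing term by term against $-c(1+f)f=-c(p+q+p^2+2pq+q^2)$ shows that every monomial appearing on the right-hand side of the sum is dominated by the corresponding monomial of $c(1+f)f$ once $c\geq a+b$, which gives the conclusion.
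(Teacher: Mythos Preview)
Your proposal is correct and is exactly the argument the paper has in mind: the lemma is stated right after \eqref{bch2} and \eqref{ineq2} with the words ``In summary, we obtain,'' and your three steps simply spell out that summary. Your treatment of part $(2)$ is in fact more careful than the paper's, which does not mention the zero set of $|F_A|$ or the regularization $(|F_A|^2+\ep^2)^{1/2}$ at all; and your term-by-term check in part $(3)$ with $c\geq a+b$ is accurate.
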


\subsection{$\ep$-regularity theorems}

The following $\ep$-regularity theorem for YMH fields with energy bound I is standard. We state the theorem for general dimensions $n\ge 2$ and include a proof here for completeness.

\begin{lem}\label{ep-reg}
There exists an $\ep_0>0$ such that for any YMH field $(A,u)$ on $B_{R}$, satisfying energy bound I:
\begin{equation}\label{Mor-c}
\sup_{B_{\rho(y)\subset B_{R}}}\(\frac{1}{\rho^{n-2}}\int_{B_{\rho}(y)}|\n_A u|^2+|F_{A}|dx\)\leq \ep^2_0,
\end{equation}	
then we have
\[\sup_{B_{R/2}}R^2(|\n_A u|^2+|F_A|)\leq C_N\ep^2_0,\]
where $C_N$ is a constant depending only on the curvature of $N$.
\end{lem}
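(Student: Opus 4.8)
The plan is to run a standard small-energy concentration/blow-up argument built on the Bochner-type differential inequality of Lemma~\ref{bochner}(3). I would first reduce to the scale-invariant quantity and exploit the subcritical scaling. Setting $f=|\n_A u|^2+|F_A|$, Lemma~\ref{bochner} gives $\De f\geq -c(1+f)f$, so $f$ is a subsolution of a Schr\"odinger-type operator whose potential is controlled by $f$ itself. The hypothesis \eqref{Mor-c} bounds the scale-invariant Morrey norm $\rho^{2-n}\int_{B_\rho}f\,dx$ by $\ep_0^2$ on every subball, and the goal is a pointwise bound $R^2 f\le C_N\ep_0^2$ on $B_{R/2}$.

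\smallskip

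\emph{Step 1: reduce to a universal small ball via a concentration function.} For $\si\in[0,R/2]$ define the scaled supremum $F(\si)=\sup_{B_{R-\si}}(R-\si-|\cdot|)^2\,? $ — more precisely, following the classical device (as in Schoen's argument for harmonic maps), I would consider the function $h(x)=(\tfrac{R}{2}-|x|)^2 f(x)$ on $B_{R/2}$, let $x_0$ be an interior maximum of $h$ with value $h(x_0)=:m$, and set $r_0=\tfrac12(\tfrac R2-|x_0|)$. On the ball $B_{r_0}(x_0)$ the quantity $f$ is comparable to $f(x_0)$ up to a factor $4$, because $(\tfrac R2-|x|)\ge (\tfrac R2-|x_0|)-r_0 = r_0$ there so $h\le m$ forces $f(x)\le 4 f(x_0)$. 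The aim is to show $m$ is bounded by $C_N\ep_0^2$; this immediately yields the stated estimate since $(\tfrac R2-|x|)^2\ge (R/4)^2$ on $B_{R/4}$ and a covering/scaling argument upgrades $B_{R/4}$ to $B_{R/2}$.

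\smallskip

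\emph{Step 2: rescale and apply elliptic subsolution estimates.} Let $e_0=f(x_0)$ and rescale to the ball of radius $\ld:=r_0\sqrt{e_0}$ by $x\mapsto x_0+\ld^{-1}\sqrt{e_0}\,(x-x_0)$, i.e.\ I pull back $(A,u)$ under the dilation that normalizes $f$ at the center to $1$. Under $x\mapsto x_0+\mu x$ with $\mu=r_0$ the rescaled field $\tilde f(x)=\mu^2 f(x_0+\mu x)$ satisfies $\De\tilde f\geq -c(\mu^2+\tilde f)\tilde f$ on $B_1$, with $\sup_{B_{1/2}}\tilde f\le 4\tilde f(0)$ and $\tilde f(0)=r_0^2 e_0$. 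The plan is to prove: if $\tilde f(0)$ is \emph{not} small then the Morrey bound is violated. Concretely, since $\tilde f\le 4\tilde f(0)$ on $B_{1/2}$, the potential $c(\mu^2+\tilde f)$ is bounded on $B_{1/2}$ by a constant times $(1+\tilde f(0))$, and a standard subsolution mean-value inequality (De Giorgi–Moser, or a Green's-function/integration-by-parts estimate using $\De\tilde f\ge -K\tilde f$) gives
\[
\tilde f(0)\;\le\; C\!\int_{B_{1/2}}\tilde f\,dx,
\]
valid once $K=C(1+\tilde f(0))$ stays below a fixed threshold. But $\int_{B_{1/2}}\tilde f\,dx = \mu^{2-n}\int_{B_{\mu/2}(x_0)} f\,dx\le C\ep_0^2$ by the Morrey hypothesis \eqref{Mor-c}. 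Hence $\tilde f(0)\le C\ep_0^2$, i.e.\ $r_0^2 e_0\le C\ep_0^2$, and tracing back $m=4 r_0^2 e_0\le C_N\ep_0^2$.

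\smallskip

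\emph{Main obstacle.} The delicate point is the threshold issue in Step~2: the mean-value inequality for the subsolution of $\De\tilde f\ge -K\tilde f$ requires $K$ (hence $\tilde f(0)$) to be a priori bounded, yet $\tilde f(0)$ is precisely what we want to estimate. I would resolve this with a standard continuity/bootstrap: choose $\ep_0$ small and argue that the set of radii where $m$ stays below the threshold is both open and closed, or equivalently run the maximum-point argument so that smallness of the Morrey norm forces $\tilde f(0)$ below the threshold, closing the loop. A secondary technical subtlety is that $f$ is only Lipschitz (because of the $|F_A|$ and $|\n_A u|^2$ terms and the distributional nature of $\De|F_A|$), so the elliptic estimate should be applied in the weak/Kato sense to $|F_A|$ and to $|\n_A u|^2$ separately using parts (1) and (2) of Lemma~\ref{bochner}, rather than to the nonsmooth sum directly; the curvature term requires care since $\De|F_A|\ge -a|F_A|^2-b|\n_A u|^2$ couples the two quantities. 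Once the mean-value inequality is justified weakly, the scaling bookkeeping is routine and dimension-independent, which is why the statement holds for all $n\ge 2$.
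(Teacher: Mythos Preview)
Your overall strategy coincides with the paper's: Schoen-type point selection, the Bochner differential inequality from Lemma~\ref{bochner}(3), and a Nash--Moser mean-value estimate. The variant you use (weight $(R/2-|x|)^2$ on $B_{R/2}$ rather than first rescaling to $B_1$) is cosmetic.

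The gap is in your resolution of the threshold obstacle in Step~2. A ``continuity/bootstrap'' argument does not close the loop here: there is no continuous family of solutions to run continuity along, and no iteration that strictly improves $\tilde f(0)$. The correct fix---and the one the paper uses---is a \emph{second rescaling} that normalizes the supremum. Concretely: assume for contradiction that $m=4\tilde f(0)\ge 4$, i.e.\ $\tilde f(0)\ge 1$, and rescale once more by $\ld=\tilde f(0)^{-1/2}\le 1$, setting $w(x)=\tilde f(0)^{-1}\tilde f(\ld x)$ on $B_1$. Then $w(0)=1$, $\sup_{B_1}w\le 4$, and $\De w\ge -c(\mu^2\ld^2+w)w\ge -5c\,w$, so the potential is now bounded by a \emph{universal} constant and Nash--Moser gives $1=w(0)\le C\int_{B_1}w\,dx$ with a universal $C$. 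A change of variables together with the Morrey hypothesis~\eqref{Mor-c} yields $\int_{B_1}w\,dx\le C\ep_0^2$, contradicting smallness of $\ep_0$. Hence $m\le 4$, and one further application of Nash--Moser (now with bounded potential) gives the quantitative bound $m\le C\ep_0^2$. This second rescaling is the missing idea; once you insert it, your argument is the paper's.

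Your secondary concern about $f=|\n_A u|^2+|F_A|$ being only Lipschitz is a non-issue: Lemma~\ref{bochner} holds distributionally, and the Nash--Moser subsolution estimate (e.g.\ \cite[Theorem~8.17]{GT}) applies to nonnegative weak subsolutions, so there is no need to treat $|F_A|$ and $|\n_A u|^2$ separately.
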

\begin{proof}
By Lemma \ref{bochner}, the function $f=|\n_A u|^2+|F_A|$	 satisfies
\[\De f\geq -c(1+f)f.\]
on $B_{R}$. It follows that the rescaled function $G(x)=R^2f(Rx)$ satisfies
\begin{equation}\label{eq:G}
\De G\geq -c(R^2+G)G
\end{equation}
on $B_1$.
	
Set $e(s)=(1-s)\sup_{B_s}G$, where $0\leq s\leq1$. Then there exists  $e_0$, $s_0<1$ and $x_0\in B_{s_0}$ such that
\[e_0=e(s_0)=\sup_{0\leq s\leq 1}e(s)=(1-s_0)G(x_0).\]
Let $\rho_0=\frac{1-s_0}{2}$. It is easy to see that
\[\rho_0\sup_{B_{\rho_0}(x_0)}G\leq e_0.\]
	
Now assume $e_0>1$. Set $\rho_1=\frac{\rho_0}{e_0}\leq \rho_0<1$ and consider the function $v(x)=\rho^2_1G(\rho_1x+x_0)$. It follows from \eqref{eq:G} that
\[\De v\geq -c(R^2\rho^2_1+v)v\]
on $B_1$ with
\[\sup_{B_1}v(x)\leq 1.\]
Then the standard Nash-Morse estimate (see Theorem 8.17 in \cite{GT} or Theorem 4.1 in \cite{HL})  yields
\[1=v(0)\leq C\(\int_{B_1}v(x)dx\)=\frac{C}{(R\rho_1)^{n-2}}\int_{B_{R\rho_1}(x_0)}f(x)dx\leq C\ep^2_0,\]
which is a contradiction, since $\ep_0$ can be chosen to be small enough.
	
Consequently, we must have $e_0\leq 1$. It follows
\[\sup_{B_s}G\leq \frac{1}{1-s}.\]
Taking $s=\frac{3}{4}$, we have that $\sup_{B_{3/4}}G\leq 4$ and by~\eqref{eq:G}
\[\De G\geq -cG,\]
in $B_{3/4}$. Then the Nash-Morse estimate gives
\[\sup_{B_{1/2}}G\leq C\int_{B_{3/4}}G(x)dx=C\frac{1}{(3/4R)^{n-2}}\int_{B_{3/4R}}f(x)dx\leq C\ep^2_0.\]
Rescaling back, we get the desired estimate for $f$ and the proof is finished.
\end{proof}

A direct corollary of Theorem \ref{ep-reg} is as follows.
\begin{cor}
There exists an $\ep_0>0$ such that for any YMH field $(A,u)$ on $B_{R}$, satisfying
\begin{equation}\label{inq1}
\int_{B_R}|\n_A u|^n+|F_{A}|^{n/2}dx\leq \ep^n_0,
\end{equation}	
then we have
\[\sup_{B_{R/2}}R^2(|\n_A u|^2+|F_A|)\leq C_N\ep^2_0\]
where $C_N$ is a constant depending only on the curvature of $N$.
\end{cor}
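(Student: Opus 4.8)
The plan is to deduce the corollary directly from the $\ep$-regularity Lemma \ref{ep-reg} by showing that the conformally invariant integral hypothesis \eqref{inq1} forces the Morrey-type energy bound I \eqref{Mor-c}. The only analytic input beyond Lemma \ref{ep-reg} is Hölder's inequality, exploited through the fact that the exponents $n$ and $n/2$ are precisely the scale-invariant ones for $\n_A u$ and $F_A$ in dimension $n$; once the Morrey bound is in hand, Lemma \ref{ep-reg} supplies the pointwise estimate with no further work.

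Concretely, I would fix an arbitrary ball $B_\rho(y)\subset B_R$. Applying Hölder's inequality with the conjugate exponents $n/2$ and $n/(n-2)$, and using $|B_\rho(y)|^{(n-2)/n}=C_n\rho^{n-2}$, gives
\[\int_{B_\rho(y)}|\n_A u|^2\,dx\leq C_n\,\rho^{n-2}\(\int_{B_\rho(y)}|\n_A u|^n\,dx\)^{\frac{2}{n}},\]
and the identical computation with $|F_A|$ in place of $|\n_A u|^2$ and exponent $n/2$ bounds $\int_{B_\rho(y)}|F_A|\,dx$ by $C_n\rho^{n-2}(\int_{B_\rho(y)}|F_A|^{n/2}\,dx)^{2/n}$. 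Since the integrands are nonnegative and $B_\rho(y)\subset B_R$, enlarging the domain of integration to $B_R$ and invoking \eqref{inq1} yields
\[\frac{1}{\rho^{n-2}}\int_{B_\rho(y)}(|\n_A u|^2+|F_A|)\,dx\leq C_n\(\int_{B_R}|\n_A u|^n+|F_A|^{\frac{n}{2}}\,dx\)^{\frac{2}{n}}\leq C_n\,\ep_0^2.\]
Taking the supremum over all such balls shows that $(A,u)$ satisfies energy bound I \eqref{Mor-c} with $C_n\ep_0^2$ in place of $\ep_0^2$.

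It then remains to choose $\ep_0$. Writing $\delta_0$ for the threshold furnished by Lemma \ref{ep-reg}, I would select $\ep_0$ so small that $C_n\ep_0^2\leq\delta_0^2$; the displayed bound then places the Morrey norm below the threshold, so Lemma \ref{ep-reg} applies and gives $\sup_{B_{R/2}}R^2(|\n_A u|^2+|F_A|)\leq C_N(C_n\ep_0^2)$, which is the claimed estimate after absorbing $C_n$ into $C_N$. The one point requiring care — and the closest thing to an obstacle — is that the conclusion's constant must be proportional to the \emph{actual} Morrey energy rather than merely to the threshold $\delta_0$; this is exactly what the quantitative form of the proof of Lemma \ref{ep-reg} delivers, since there the smallness of the energy is used only to derive the universal bound $\sup_{B_{3/4}}G\leq 4$ (via the blow-up/contradiction step), after which the linear sub-solution estimate $\sup_{B_{1/2}}G\leq C\int_{B_{3/4}}G$ controls the pointwise bound by the energy itself. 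For $n=2$ the Hölder step degenerates but becomes trivial, as \eqref{inq1} is then literally the $L^2$ bound on $\n_A u$ and the $L^1$ bound on $F_A$, and monotonicity of the integral over nested balls gives energy bound I at once.
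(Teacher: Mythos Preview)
Your proposal is correct and matches the paper's intended argument: the corollary is stated there as a ``direct corollary'' of Lemma~\ref{ep-reg} with no proof given, and the reduction via H\"older's inequality that you carry out is precisely the omitted step. Your additional remark that the quantitative form of Lemma~\ref{ep-reg} yields a bound proportional to the actual Morrey energy (not just the threshold) is a valid and useful observation.
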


For later applications, we also need the following more general $\ep$-regularity theorem for 3-dimensional YMH fields.
\begin{cor}\label{ep-es1}
Let $B_R\subset \Real^3$ be a ball with radius $R\leq 1$. There exists an $\ep_0>0$ such that for any YMH field $(A,u)$ on $B_{R}$, satisfying
\begin{equation}\label{inq2}
\sup_{B_{\rho(y)\subset B_{R}}}\(\frac{1}{\rho}\int_{B_{\rho}(y)}|\n_A u|^2dx\)+R^{\ka}\norm{F_A}_{L^p(B_R)}\leq \ep^2_0,
\end{equation}	
for some $0\le \ka\le 2-\frac{3}{p}$ and $p\ge \frac{3}{2}$,
then we have
\[\sup_{B_{R/2}}\(R^2|\n_A u|^2+R^{\frac{3}{p}+\ka}|F_A|\)\leq C_N\ep^2_0,\]
where $C_N$ is a constant depending only on the curvature of $N$ and $p$.
\end{cor}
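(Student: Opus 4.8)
The plan is to rescale to the unit ball and then run two essentially separate Nash--Moser iterations, one for $|\n_A u|^2$ and one for $|F_A|$, using that $p\ge\frac32=\frac n2$ is exactly the critical exponent for the curvature potential. Write $\beta=\frac3p+\ka$; the hypotheses $p\ge\frac32$ and $0\le\ka\le 2-\frac3p$ give $0<\beta\le 2$, and this $\beta$ is precisely the scaling weight that must appear on the curvature. Concretely, set $\tilde u(x)=u(Rx)$ and $\tilde A(x)=RA(Rx)$ on $B_1$. The Morrey quantity $\sup_\rho\frac1\rho\int_{B_\rho}|\n_A u|^2$ is scale invariant in dimension $3$, while $\norm{F_{\tilde A}}_{L^p(B_1)}=R^{2-\frac3p}\norm{F_A}_{L^p(B_R)}\le R^{2-\beta}\ep_0^2\le\ep_0^2$ by $R\le1$. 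Thus after rescaling I may assume $\sup_\rho\frac1\rho\int_{B_\rho}|\n_{\tilde A}\tilde u|^2\le\ep_0^2$ and $\norm{F_{\tilde A}}_{L^p(B_1)}\le R^{2-\beta}\ep_0^2$, and it suffices to prove $\sup_{B_{1/2}}|\n_{\tilde A}\tilde u|^2\le C\ep_0^2$ and $\sup_{B_{1/2}}|F_{\tilde A}|\le CR^{2-\beta}\ep_0^2$, which rescale back to the asserted bounds.

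For the Higgs field I would follow the proof of Lemma~\ref{ep-reg} almost verbatim. The only new feature is that the curvature now enters the Bochner inequality of Lemma~\ref{bochner}(1) as a potential: rescaling gives $\De|\n_{\tilde A}\tilde u|^2\ge -(a|F_{\tilde A}|+aR^2+b|\n_{\tilde A}\tilde u|^2)|\n_{\tilde A}\tilde u|^2$. Since $p\ge\frac32$, Hölder bounds $\norm{F_{\tilde A}}_{L^{3/2}(B_1)}$ by $C\norm{F_{\tilde A}}_{L^p(B_1)}\le C\ep_0^2$, so the curvature contributes a potential that is small in the critical space $L^{n/2}=L^{3/2}$; the term $aR^2$ is a bounded constant, and the quartic term is absorbed exactly as in Lemma~\ref{ep-reg} through the $e(s)=(1-s)\sup_{B_s}G$ rescaling device. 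Choosing $\ep_0$ small lets the iteration close and yields $\sup_{B_{1/2}}|\n_{\tilde A}\tilde u|^2\le C\ep_0^2$.

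For the curvature I would use the bound just obtained as a source. Rescaling the Bochner inequality of Lemma~\ref{bochner}(2) gives $\De|F_{\tilde A}|\ge -a|F_{\tilde A}|^2-R^2 b|\n_{\tilde A}\tilde u|^2$, and the decisive structural point is the factor $R^2$: since $|\n_A u|^2$ carries one fewer scaling dimension than $|F_A|$, the Higgs source is \emph{subcritical} and, after the previous step, is bounded by $CR^2\ep_0^2$ in $L^\infty(B_{1/2})$, hence negligible. Treating $a|F_{\tilde A}|\in L^p\subset L^{3/2}$ (small) as the potential, the De Giorgi--Nash--Moser local boundedness estimate for nonnegative subsolutions produces a sup bound linear in the data, $\sup_{B_{1/2}}|F_{\tilde A}|\le C\(\norm{F_{\tilde A}}_{L^p(B_1)}+R^2\ep_0^2\)\le CR^{2-\beta}\ep_0^2$. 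Rescaling back gives $R^{\beta}|F_A|=R^{\frac3p+\ka}|F_A|\le C\ep_0^2$ on $B_{R/2}$, which together with the Higgs estimate is the claim.

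I expect the main obstacle to be justifying that both iterations genuinely close with the curvature controlled only in $L^p$ rather than in the Morrey $L^1$ norm of Lemma~\ref{ep-reg}: one must check that the borderline case $p=\frac32$ is admissible and that all constants depend only on $N$ and $p$. The second delicate point is the scaling bookkeeping that extracts the \emph{sharp} exponent $\frac3p+\ka$; this hinges entirely on the observation that the coupling term $|\n_A u|^2$ is of lower order in the curvature equation, so that the curvature estimate is governed by $\norm{F_A}_{L^p}$ alone and inherits its scaling weight $2-\frac3p-\ka$, while the naive route of merely feeding a Morrey bound for $|F_A|$ into Lemma~\ref{ep-reg} would only give the weaker weight $2$.
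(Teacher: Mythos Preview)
Your plan is correct and would close, but it takes a more laborious route than the paper. The paper's proof begins with the one-line observation you dismiss as ``naive'': by H\"older, $\frac{1}{\rho}\int_{B_\rho}|F_A|\le C\rho^{2-3/p}\norm{F_A}_{L^p}\le CR^{\ka}\norm{F_A}_{L^p}\le C\ep_0^2$, so hypothesis \eqref{inq2} already implies the full Morrey bound \eqref{Mor-c} of Lemma~\ref{ep-reg}. That lemma, applied to the combined quantity $f=|\n_A u|^2+|F_A|$, then gives a preliminary pointwise bound $R^2(|\n_A u|^2+|F_A|)\le C\ep_0^2$ on $B_{3R/4}$ in one stroke. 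The payoff is that this preliminary bound \emph{linearizes} the Bochner inequality for $|F_A|$: item (2) of Lemma~\ref{bochner} becomes $\De|F_A|\ge -CR^{-2}|F_A|-CR^{-2}\ep_0^2$ with a \emph{bounded} potential, so the upgrade to the sharp weight $R^{3/p+\ka}$ is a single standard Nash--Moser estimate $\sup_{B_{R/2}}|F_A|\le CR^{-3/p}\norm{F_A}_{L^p}+C\ep_0^2$, with no critical-exponent issues. Your approach instead decouples from the start and treats $|F_{\tilde A}|$ as a small $L^{3/2}$ potential in both iterations; this works, but it forces you to carry the borderline $p=\tfrac32$ case through the Moser iteration (absorbing $\int|F_{\tilde A}|v^{2k}$ via Sobolev and smallness), which is exactly the technicality the paper sidesteps. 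In short, the ``naive'' route does give only the weak weight $2$ on $|F_A|$ at first, but that weak bound is what makes the subsequent sharp estimate effortless.
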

\begin{proof}
Since $0\le \ka\le 2-\frac{3}{p}$ and $p\ge \frac{3}{2}$, by H\"older inequality, we have for any $0<\rho\le R \le 1$,
\[ \frac{1}{\rho}\int_{B_\rho}|F_A|dx\le C\rho^{2-3/p}\|F_A\|_{L^p(B_\rho)}\le CR^{\ka}\norm{F_A}_{L^p(B_R)}.\]
So condition \eqref{inq2} implies \eqref{Mor-c} and it follows from Theorem \ref{ep-reg} that
\[\sup_{B_{3R/4}}R^2(|\n_A u|^2+|F_A|)\leq C\ep^2_0\]
where $C$ is a constant depending only on the curvature of $N$ and $p$.
	
To improve the estimate of $F_A$, note that by item (2) of Lemma \ref{bochner},
\[\De |F_A|\geq -a|F_A|^2-b|\n_A u|^2\geq -\frac{C}{R^2}|F_A|-\frac{C\ep^2_0}{R^2}.\]
By rescaling and letting $v(x)=|F_A|(Rx)$, we get
\[\De v\geq  -Cv-C\ep^2_0\]
on $B_{\frac{3}{4}}$. Then standard Nash-Morse estimate gives
\[\sup_{B_{1/2}}v\leq C\norm{v}_{L^p(B_{3/4})}+C\ep^2_0.\]
Rescaling back, we obtain
\[\sup_{B_{R/2}}|F_A|\leq \frac{C}{R^{\frac{3}{p}}}\norm{F_A}_{L^p(B_{3R/4})}+C\ep^2_0\leq C(\frac{\ep^2_0}{R^{\frac{3}{p}+\ka}}+\ep^2_0).\]
This gives the desired estimate of $|F_A|$ and finishes the proof.
\end{proof}

\section{Kato inequalities}\label{s:Ka-inq}
In this section, we establish almost-optimal Kato inequalities for YMH fields in general dimensions, which play a crucial roles in obtaining decay estimates for 3d YMH fields in the next section and in our sequel for n-dimensional YMH fields.

Let $B_1$ be the unit ball in $\Real^n$. Let $E$ be a trivial vector bundle over $B_1$, which is equipped with a smooth bundle metric $h$. Suppose that there is a connection $\n$ on $E$, which is compatible with the metric $h$, and we use $D$ to denote the exterior derivative induced by $\n$.
For any $k\in \mathbb{N}$, we denote $\Om^k(E)=\G(\wedge^kT^*B_1\otimes E)$ the space of $E$-valued $k$-forms on $B_1$. Let $\{x^i\}_1^n$ be the natural coordinates on $B_1$, $\{e_{\beta}\}_1^K$ be a moving frame on $E$.

\medskip
\subsection{Kato inequalities for 1-forms} Let $\om $ be a 1-form in $\Om^1(E)$. Then $\om$ and $\n \om$ can be written locally as
\[\om=\om^\beta_idx^i\otimes e_\beta\quad\text{and}\quad \n \om=\om^\beta_{ij}dx^i\otimes dx^j\otimes e_\beta.\]
Then for each $\beta$, $\(\om^\beta_{ij}\)$ is a matrix in $M_{n\times n}(\Real)$. We start with the following basic lemma.
\begin{lem}\label{orth1}
Let $B\in M_{n\times n}(\Real)$ be a matrix. Then $B$ has the following orthogonal decomposition with respect with the standard inner product
\[B=\breve{B}_{sym}+\frac{1}{n}\tr{B}\,I_{n\times n}+B_{anti},\]
where $B_{anti}=\frac{B-B^{T}}{2}$ is the antisymmetric part of $B$,  $B_{sym}=\frac{B+B^{T}}{2}$ is the symmetric part of $B$ and $\breve{B}_{sym}$ is the trace free part of $B_{sym}$.
\end{lem}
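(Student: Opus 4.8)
The plan is to verify the stated identity as a purely algebraic decomposition and then to check that the three summands are mutually orthogonal with respect to the standard (Frobenius) inner product $\<A,B\>=\tr(A^{T}B)=\sum_{i,j}A_{ij}B_{ij}$ on $M_{n\times n}(\Real)$.

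First I would record the algebraic identity. By definition $B_{sym}+B_{anti}=\frac{B+B^{T}}{2}+\frac{B-B^{T}}{2}=B$. Since $\tr(B^{T})=\tr(B)$, the antisymmetric part is traceless, so $\tr(B_{sym})=\tr(B)$; hence the trace-free part of $B_{sym}$ is exactly $\breve{B}_{sym}=B_{sym}-\frac{1}{n}\tr(B)\,I_{n\times n}$. Substituting this into $B=B_{sym}+B_{anti}$ yields the claimed three-term expression.

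It then remains to establish pairwise orthogonality. For the symmetric/antisymmetric cross term, if $S=S^{T}$ and $A=-A^{T}$ then, using the cyclic invariance of the trace together with $\tr(M)=\tr(M^{T})$, one gets $\tr(SA)=\tr((SA)^{T})=\tr(A^{T}S^{T})=-\tr(AS)=-\tr(SA)$, so $\<S,A\>=\tr(S^{T}A)=\tr(SA)=0$; applying this with $S=\breve{B}_{sym}$ and with $S=\frac{1}{n}\tr(B)I$ shows that both symmetric pieces are orthogonal to $B_{anti}$. The two symmetric pieces are orthogonal to each other since $\<\breve{B}_{sym},\frac{1}{n}\tr(B)I\>=\frac{1}{n}\tr(B)\,\tr(\breve{B}_{sym})=0$, because $\breve{B}_{sym}$ is traceless by construction. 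The argument is entirely elementary linear algebra, so I anticipate no genuine obstacle; the only points requiring care are fixing the correct inner product and invoking the transpose and cyclic identities for the trace to annihilate the symmetric--antisymmetric cross term.
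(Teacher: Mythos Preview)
Your proof is correct and complete. The paper itself states this lemma without proof, treating it as an elementary linear-algebra fact; your verification of the algebraic identity together with the pairwise orthogonality checks via the trace identities is exactly what is needed.
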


For the $1$-form $\om$, since locally
\begin{itemize}
\item $\n \om=\om^\beta_{ij}dx^i\otimes dx^j\otimes e_\beta$ corresponds to the matrix $ B=(\om^\beta_{ij})$,
\item $D\om=\sum_{i<j}(\om^\beta_{ij}-\om^\beta_{ji})dx^i\wedge dx^j\otimes e_\beta$ corresponds $B_{anti}$ with norm $|D\om|^2=\sum_{\beta}\sum_{i<j}(\om^\beta_{ij}-\om^\beta_{ji})^2$,
\item $D^*\om=-\tr B,$
\end{itemize}
then Lemma \ref{orth1} implies that $\n \om$ locally has the following orthogonal decomposition
\[\n \om =-\frac{1}{n}D^*\om I_{n\times n}+\frac{1}{\sqrt{2}}D\om+{\breve{\n \om}}_{sym}.\]

\begin{lem}\label{Kato-ineq-1}
Let $\om$ be a $C^1$-smooth $1$-form in $\Om^1(E)$ with $n\geq 2$. Then we have
\[\frac{n}{n-1}\Big|\n|\om|-\frac{D^*\om}{n}\frac{\om}{|\om|}-\frac{1}{2}D\om\cdot\frac{\om}{|\om|}\Big|^2\leq |\n \om|^2-\frac{(D^*\om)^2}{n}-\frac{1}{2}|D\om|^2.\]
Furthermore, if $\om$ is harmonic, i.e. $D\om=0$ and $D^*\om=0$, then
\[\frac{n}{n-1}|\n|\om||^2\leq |\n \om|^2.\]
\end{lem}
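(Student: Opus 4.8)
The plan is to prove the refined Kato inequality for $1$-forms $\om\in\Om^1(E)$ by working pointwise and exploiting the orthogonal decomposition of $\n\om$ already established in the text. Since the statement is pointwise, I would fix a point $p$ where $\om(p)\neq 0$ (the inequality is trivial where $\om=0$) and carry out the computation in the local frame, treating each component matrix $B=(\om^\beta_{ij})$ via Lemma~\ref{orth1}. The key observation is that $|\n\om|^2$ decomposes orthogonally as
\[
|\n\om|^2 = \frac{1}{n}(D^*\om)^2 + \frac{1}{2}|D\om|^2 + |\breve{\n\om}_{sym}|^2,
\]
so the right-hand side of the desired inequality is exactly $|\breve{\n\om}_{sym}|^2$, the squared norm of the trace-free symmetric part. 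The task reduces to bounding the left-hand side by this single remaining term.

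Next I would compute $\n|\om|$ explicitly. Writing $|\om| = \sqrt{\langle\om,\om\rangle}$, we have $\p_j|\om| = \langle \n_j\om, \om\rangle/|\om|$, so $\n|\om|$ is the contraction of $\n\om$ against the unit section $\om/|\om|$. The correction terms subtracted on the left-hand side, namely $\frac{D^*\om}{n}\frac{\om}{|\om|}$ and $\frac{1}{2}D\om\cdot\frac{\om}{|\om|}$, are precisely the contractions against $\om/|\om|$ of the trace part $-\frac1n D^*\om\, I$ and the antisymmetric part $\frac{1}{\sqrt2}D\om$ in the orthogonal decomposition. **Therefore** the vector inside the norm on the left is exactly the contraction of the trace-free symmetric part $\breve{\n\om}_{sym}$ against the unit vector $\om/|\om|$. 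The inequality to prove thus becomes the clean statement
\[
\frac{n}{n-1}\bigl|\,\breve{\n\om}_{sym}\cdot\tfrac{\om}{|\om|}\,\bigr|^2 \le |\breve{\n\om}_{sym}|^2.
\]

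**The main obstacle** is establishing this last inequality, which is the genuinely sharp ingredient. It is a statement about a trace-free symmetric tensor $S$ (the trace-free symmetric part, for each bundle index $\beta$): contracting $S$ against a fixed unit vector $v=\om/|\om|$ and measuring the resulting covector cannot capture more than a $\frac{n-1}{n}$ fraction of the full Frobenius norm. I would prove this by choosing an orthonormal basis with $v$ as the first vector, so that $S\cdot v$ is the first column of $S$; the constraint is that $S$ is symmetric with $\tr S=0$, which forces $S_{11}=-\sum_{k\ge2}S_{kk}$. An application of Cauchy--Schwarz on the diagonal entries, combined with counting the off-diagonal symmetric entries (each appearing twice in the Frobenius norm but once in the column), yields the factor $\frac{n-1}{n}$, with equality in the conformal case. **The subtlety** is handling the bundle index $\beta$ correctly: the contraction $\n|\om|$ mixes all $\beta$ through the single unit section $\om/|\om|$, so rather than applying the scalar inequality componentwise I would verify it directly for the combined object, using Cauchy--Schwarz across the $\beta$ indices to reduce to the single-tensor statement. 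The harmonic case follows immediately by setting $D\om=0$ and $D^*\om=0$, which kills both correction terms and leaves $\frac{n}{n-1}|\n|\om||^2\le|\n\om|^2$.
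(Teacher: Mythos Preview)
Your plan is correct and follows essentially the same architecture as the paper: both arguments rest on the orthogonal decomposition of $\n\om$ from Lemma~\ref{orth1}, identify the right-hand side as $|\breve{\n\om}_{sym}|^2$, and use Cauchy--Schwarz to extract the sharp constant $\frac{n-1}{n}$. The organizational difference is this: the paper introduces an auxiliary test vector $X$, decomposes \emph{both} $\n\om$ and $X\otimes\om$ via Lemma~\ref{orth1}, and obtains the sharp constant from the bound $|B_2|^2\le\frac{n-1}{n}|X|^2|\om|^2$ on the trace-free symmetric part of $X\otimes\om$; the bundle index is absorbed automatically because this bound is proved $\beta$-by-$\beta$ and then summed. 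Your route instead works directly with the contraction $S\cdot v$, proves the sharp scalar inequality $|Sv|^2\le\frac{n-1}{n}|S|^2$ for a single trace-free symmetric matrix (by aligning $v$ with a coordinate axis and using $\tr S=0$), and then reduces the vector-bundle case to this via Cauchy--Schwarz over $\beta$: writing $v^\beta=|v^\beta|\hat v^\beta$, one has $|\sum_\beta |v^\beta|\,S^\beta\hat v^\beta|^2\le(\sum_\beta|v^\beta|^2)(\sum_\beta|S^\beta\hat v^\beta|^2)\le\frac{n-1}{n}\sum_\beta|S^\beta|^2$. These are dual formulations of the same estimate; the paper's auxiliary-$X$ device is slightly slicker in that it handles the bundle index without a separate step, while your version has the advantage of isolating the sharp scalar statement (with its conformal equality case) explicitly.
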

\begin{proof}
For any $C^1$-smooth 1-form $X\in \Om^1(E)$, a simple computation shows
\[X\cdot\n|\om||\om|=X\cdot \n(\frac{1}{2}|\om|^2)=\<\n \om, X\otimes \om\>.\]
On the other hand, Lemma \ref{orth1} yields the following orthogonal decomposition for $\n \om$ and $X\otimes \om$ respectively
\begin{itemize}
	\item $\n \om =-\frac{1}{n}D^*\om I_{n\times n}+\frac{1}{\sqrt{2}}D\om+B_1$,
	\item $X\otimes \om=\frac{1}{n}\<X,\om\>I_{n\times n}+\frac{1}{\sqrt{2}}X\wedge \om+B_2.$
\end{itemize}
Then we have
\begin{align*}
	\<\n \om, X\otimes \om\>=&-\frac{1}{n}D^*\om<X,\om>+\frac{1}{2}\<D\om,X\wedge\om\>+\<B_1,B_2\>.
\end{align*}
Consequently, we can derive from this formula that
\begin{equation}\label{eq-1}
\begin{aligned}
	&\Big|X\cdot\n|\om||\om|-\frac{1}{n} D^*\om<X,\om>-\frac{1}{2}\<D\om,X\wedge\om\>\Big|^2\\
	=&\<B_1,B_2\>^2\leq |B_1|^2|B_2|^2\\
	\leq &\frac{n-1}{n}\(|\n w|^2-\frac{1}{n}(D^*\om)^2-\frac{1}{2}|D\om|^2\)|X|^2|\om|^2.
\end{aligned}
\end{equation}
Here we have used the fact that
\begin{align*}
	|B_1|^2=&|\n w|^2-\frac{1}{n}(D^*\om)^2-\frac{1}{2}|D\om|^2;\\
	|B_2|^2=&|X|^2|\om|^2-\frac{1}{n}\<X,\om\>^2-\frac{1}{2}(|X|^2|\om|^2-\<X,\om\>^2)\\
	=&\frac{1}{2}|X|^2|\om|^2+\frac{n-2}{2n}\<X,\om\>^2
	\leq \frac{n-1}{n}|X|^2|\om|^2.
\end{align*}
Now the lemma follows from \eqref{eq-1} and the arbitrariness of $X$.
\end{proof}

\medskip
\subsection{Kato inequalities for 2-forms}
A $C^1$-smooth 2-form $F\in \Om^2(E)$ can be written locally by
\[F=\frac{1}{\sqrt{2!}}F^\beta_{ij}dx^i\wedge dx^j\otimes e_\beta,\]
 where
\[F^\beta_{ij}=-F^\beta_{ji}\]
for each $\beta\in \{1,2,\cdots, K\}$. Thus we can view $F$ as an antisymmetric matrix $(F^\beta_{ij})$.

Let $\Om$ be an another 2-form $(\Om^\beta_{ij})$. The inner product of $F$ and $\Om$ is defined by
\[\<F,\Om\>=\frac{1}{2}F_{ij}\Om_{ij}=\sum_{\beta}\sum_{i<j}F^\beta_{ij}\Om^\beta_{ij}.\]
We also have
\begin{itemize}
\item $\n F=\frac{1}{\sqrt{2}}F^\beta_{ij,k}dx^i\otimes dx^j\otimes dx^k\otimes e_\beta$ with $F^\beta_{ij,k}=F^\beta_{ji,k}$ and
\[|\n F|^2=\frac{1}{2}F_{ij,k}\cdot F_{ij,k}=\sum_\beta\sum_{i<j,k}F^\beta_{ij,k}\cdot F^\beta_{ij,k},\]
\item $DF=\frac{1}{\sqrt{3!}}(F^\beta_{ij,k}+F^\beta_{jk,i}+F^\beta_{ki,j})dx^i\wedge dx^i\wedge dx^k\otimes e_\beta$ and
\begin{align*}
|DF|^2=&\frac{1}{3!}\sum_{\beta}\sum_{i,j,k}(F^\beta_{ij,k}+F^\beta_{jk,i}+F^\beta_{ki,j})^2\\
=&\sum_\beta\sum_{i<j<k}(F^\beta_{ij,k}+F^\beta_{jk,i}+F^\beta_{ki,j})^2,
\end{align*}
\item $D^*F=\sum_{j\neq i}F^\beta_{ij,j}dx^i\otimes e_\beta$.
\end{itemize}

The following result for 2-forms is analogous to Lemma \ref{orth1} for 1-forms.
\begin{lem}\label{oth2}
Let $F$ be a $C^1$-smooth 2-form in $\Om^2(E)$. Then $\n F$ has the following orthonogal decomposition
\begin{equation}\label{oth}
\n F=\frac{1}{(n-1)\sqrt{2}}\de_{jk}(i)(D^*F)_idx^i\otimes dx^j\otimes dx^k+\frac{1}{\sqrt{3}}DF+\breve{F},
\end{equation}
where $\breve{F}$ satisfies
\[\sum_{j\neq i}\breve{F}_{ij,j}=0 \,\,\text{and}\,\, \breve{F}_{ii,k}=0;\]
 the $\(\de_{jk}(i)\)$ satisfies $\de_{jk}(i)=1$ for $j=k\neq i$ and $d_{jk}(i)=0$ for other cases.
\end{lem}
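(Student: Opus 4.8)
The plan is to read \eqref{oth} as a pointwise statement in linear algebra. Fixing $p\in B_1$ and a frame index $\beta$, it suffices to decompose the real $3$-tensor with components $F^\beta_{ij,k}$, which is antisymmetric in its first two slots, and then recover the $E$-valued statement by summing over $\beta$. The ambient space $W=\{T\in\otimes^3\Real^n:T_{ijk}=-T_{jik}\}\cong\wedge^2\Real^n\otimes\Real^n$, with the standard inner product, splits orthogonally under $O(n)$ into three invariant summands: the totally antisymmetric tensors $\wedge^3\Real^n$, a ``trace'' summand isomorphic to $\Real^n$ and spanned by tensors of the form $\de_{jk}v_i-\de_{ik}v_j$, and the trace-free complement. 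The three terms on the right of \eqref{oth} are precisely the orthogonal projections of $\n F$ onto these summands, so the content of the lemma is that these projections sum to $\n F$ and are mutually orthogonal.

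First I would introduce the two natural equivariant maps on $W$: the contraction $c_i:=\sum_j F_{ij,j}$, which reproduces $(D^*F)_i$, and the total antisymmetrization $\mathrm{Alt}$, which reproduces a multiple of $DF$. I would then let $T^{(1)}$ denote the first term of \eqref{oth}, set $T^{(2)}=\tfrac{1}{\sqrt3}DF$, and define $\breve F:=\n F-T^{(1)}-T^{(2)}$, so that completeness is built in. What must be checked is that $\breve F$ lands in the trace-free complement. A short computation gives $\sum_j T^{(1)}_{ijj}=c_i$ — this is exactly where the factor $\tfrac{1}{n-1}$ enters, since contracting $\de_{jk}v_i-\de_{ik}v_j$ produces $(n-1)v_i$ — while $\sum_j(DF)_{ijj}=0$ because contracting two slots of a totally antisymmetric tensor vanishes; hence $\sum_{j\neq i}\breve F_{ij,j}=0$. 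Likewise $\mathrm{Alt}(T^{(1)})=0$, since $\de_{jk}$ is symmetric in $j,k$ and antisymmetrization annihilates it, while $T^{(2)}$ is by construction the totally antisymmetric part of $\n F$, so $\mathrm{Alt}(\breve F)=0$. The remaining property $\breve F_{ii,k}=0$ is then immediate, because $\n F$, $T^{(1)}$ and $DF$ are all antisymmetric in $(i,j)$, hence so is $\breve F$.

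With these trace and antisymmetrization properties in hand, orthogonality follows cleanly. One has $\<T^{(1)},T^{(2)}\>=0$ and $\<T^{(2)},\breve F\>=0$ because $T^{(2)}$ is totally antisymmetric while $T^{(1)}$ and $\breve F$ have vanishing total antisymmetrization (pairing a totally antisymmetric tensor against one whose $\mathrm{Alt}$ is zero gives zero, using that $\mathrm{Alt}$ is a self-adjoint projection). For $\<T^{(1)},\breve F\>$ I would expand $T^{(1)}_{ijk}=\tfrac{1}{n-1}(\de_{jk}c_i-\de_{ik}c_j)$ and contract: the $\de_{jk}$ term pairs $c_i$ against $\sum_j\breve F_{ijj}=0$, and the $\de_{ik}$ term pairs $c_j$ against $\sum_i\breve F_{iji}$, which equals $-\sum_i\breve F_{jii}=0$ by antisymmetry in $(i,j)$ together with the trace-freeness just established. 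Thus the three pieces are pairwise orthogonal and sum to $\n F$, proving \eqref{oth}.

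The step I expect to be the main obstacle is not the conceptual decomposition but the bookkeeping of the normalization constants $\tfrac{1}{(n-1)\sqrt2}$ and $\tfrac{1}{\sqrt3}$. One must reconcile the $\tfrac{1}{\sqrt2}$ and $\tfrac{1}{\sqrt{3!}}$ conventions used to represent $\n F$, $DF$ and $D^*F$ as tensors with the requirement that $T^{(1)}$ contract exactly to $D^*F$ and that $T^{(2)}$ be exactly the totally antisymmetric part; in particular, the single-$\de$ expression written for $T^{(1)}$ in \eqref{oth}, together with the $\tfrac{1}{\sqrt2}$ factor, must be seen to encode the antisymmetrized tensor $\de_{jk}c_i-\de_{ik}c_j$. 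Confirming that every such factor is consistent — and that the $\tfrac{1}{n-1}$ is forced by the contraction identity — is the delicate part, whereas the orthogonality and trace-free verifications above are routine once the symmetries are tracked carefully.
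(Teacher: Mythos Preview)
Your proposal is correct and follows the same overall architecture as the paper: define $\breve F$ as the remainder, verify its two trace conditions, and then check pairwise orthogonality of the three summands. The paper does exactly this, and like you it notes that $\breve F_{ii,k}=0$ is automatic from antisymmetry in $(i,j)$ and that the $\tfrac{1}{n-1}$ is forced by the contraction.

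Where you differ slightly is in the orthogonality step. The paper verifies (i) $\langle T^{(1)},DF\rangle=0$, (ii) $\langle\nabla F,T^{(1)}\rangle=\tfrac{1}{2(n-1)}|D^*F|^2$, and (iii) $\langle\nabla F,\tfrac{1}{\sqrt3}DF\rangle=\tfrac13|DF|^2$, proving (iii) by the cyclic-sum identity $C_{ijk}=C_{jki}=C_{kij}$ for $C_{ijk}=F_{ij,k}+F_{jk,i}+F_{ki,j}$; orthogonality of $\breve F$ with $T^{(1)},T^{(2)}$ then follows by subtraction. You instead argue the three pairwise orthogonalities directly, using that $\mathrm{Alt}$ is a self-adjoint projection (so anything with zero total antisymmetrization is orthogonal to $\wedge^3$) and that pairing the $\delta$-tensor against $\breve F$ reduces to the vanishing traces. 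Your route is a bit more structural and sidesteps the explicit cyclic computation; the paper's route yields the norm identities $|T^{(1)}|^2=\tfrac{1}{2(n-1)}|D^*F|^2$ and $|T^{(2)}|^2=\tfrac13|DF|^2$ as a byproduct, which are used immediately afterward in Lemma~\ref{Kato-ineq-2}. Your flagged obstacle---reconciling the $\tfrac{1}{\sqrt2}$ convention so that the single-$\delta$ expression for $T^{(1)}$ really encodes the antisymmetrized tensor $\delta_{jk}c_i-\delta_{ik}c_j$---is genuine bookkeeping but not a gap.
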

\begin{proof}
By definition, $\breve{F}=\n F-\frac{1}{n-1}\de_{jk}(i)(D^*F)_idx^i\otimes dx^j\otimes dx^k-\frac{1}{\sqrt{3}}DF$. Then a simple calculation gives
\begin{align*}
\sum_{j\neq i}\breve{F}_{ij,j}=&\sum_{j\neq i}F^\beta_{ij,j}e_\beta-\frac{1}{n-1}\(D^*F\)^\beta_ie_\beta\sum_{j\neq i}1=0,\\
\breve{F}_{ii,j}=&F^\beta_{ii,j}=0.
\end{align*}

To show $\n F$ has the orthogonal decomposition \eqref{oth}, we only need to show
\begin{itemize}
	\item[$(1)$] $\<\frac{1}{n-1}\de_{jk}(i)(D^*F)_idx^i\otimes dx^j\otimes dx^k, DF\>=0$,
	\item[$(2)$] $\<\n F, \frac{1}{(n-1)\sqrt{2}}\de_{jk}(i)(D^*F)_idx^i\otimes dx^j\otimes dx^k\>=\frac{1}{2(n-1)}|D^*F|^2$;
	\item[$(3)$] $\<\n F, \frac{1}{\sqrt{3}}DF\>=\frac{1}{3}|DF|^2$.
\end{itemize}
The identity (1) and (2) follows from a straight forward calculation and we only give the proof of (3). For simplicity, we set $C^\beta_{ijk}=F^\beta_{ij,k}+F^\beta_{jk,i}+F^\beta_{ki,j}$, then $C^\beta_{ijk}=C^\beta_{jki}=C^\beta_{kij}$ for any $i,j,k$. On the other hand, we have
\begin{align*}
\<\n F,DF\>=&\frac{1}{2\sqrt{3}}\sum_{\beta}\sum_{i,j,k}\<F^\beta_{ij,k}, C^\beta_{ijk}\>\\
=&\frac{1}{2\sqrt{3}}\sum_{\beta}\sum_{i,j,k}\<F^\beta_{jk,i}, C^\beta_{jki}\>\\
=&\frac{1}{2\sqrt{3}}\sum_{\beta}\sum_{i,j,k}\<F^\beta_{ki,j}, C^\beta_{kij}\>.
\end{align*}
Thus, the fact that $C^\beta_{ijk}=C^\beta_{jki}=C^\beta_{kij}$ yields
\[3\<\n F, DF\>=\frac{1}{2\sqrt{3}}\sum_{\beta}\sum_{i,j,k}(C^\beta_{ki,j})^2=\sqrt{3}|DF|^2.\]
\end{proof}

Therefore, we can apply Lemma \ref{oth2} to obtain the following improved kato inequalities for 2-forms.

\begin{lem}\label{Kato-ineq-2}
Let $F$ be a $C^1$-smooth 2-form in $\Om^2(E)$ with $n\geq 3$. Then we have
\begin{itemize}
\item[$(1)$] If $F$ is a harmonic 2-form, i.e. $D^*F=0$ and $DF=0$, then
\begin{equation*}
\begin{cases}
\frac{3}{2}|\n |F||^2\leq |\n F|^2 \quad &n=3;\\[1ex]
\frac{n-1}{n-2}|\n |F||^2\leq |\n F|^2 \quad &n\geq 4.
\end{cases}	
\end{equation*}
\item[$(2)$]  If $F$ is closed, then
\begin{equation*}
\begin{cases}
\frac{3}{2}\Big|\n |F|-\frac{1}{2\sqrt{2}}\<D^*F,\frac{F}{|F|}\>\Big|^2\leq |\n F|^2-\frac{1}{4}|D^*F|^2 \quad &n=3,\\[1ex]
\frac{n-1}{n-2}\Big|\n |F|-\frac{1}{(n-1)\sqrt{2}}\<D^*F,\frac{F}{|F|}\>\Big|^2\leq |\n F|^2-\frac{1}{2(n-1)}|D^*F|^2 \quad &n\geq 4.
\end{cases}	
\end{equation*}
\end{itemize}
\end{lem}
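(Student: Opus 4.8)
The plan is to imitate the proof of Lemma~\ref{Kato-ineq-1}, with the role of Lemma~\ref{orth1} now played by the orthogonal splitting of $\n F$ from Lemma~\ref{oth2}. The starting identity is that, for any (real) $1$-form $X$,
\[
|F|\,(X\cdot\n|F|)=X\cdot\n\(\tfrac12|F|^2\)=\<\n F,\,F\otimes X\>,
\]
where $F\otimes X$ is the tensor with components $F_{ij}X_k$ ($X$ in the derivative slot). Being antisymmetric in its first two indices, $F\otimes X$ lives in the same tensor space as $\n F$ and hence admits the three-piece decomposition of Lemma~\ref{oth2}: its divergence part is carried by the contraction $\iota_X F$, with $(\iota_X F)_i=\sum_j F_{ij}X_j$, its totally antisymmetric part is $X\wedge F$, and the rest is a trace-free remainder $\widetilde B$. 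First I would record the three norms $|F\otimes X|^2=|F|^2|X|^2$, then $\tfrac1{2(n-1)}|\iota_X F|^2$ for the divergence part and $\tfrac13\(|F|^2|X|^2-|\iota_X F|^2\)$ for the antisymmetric part, which give
\[
|\widetilde B|^2=\tfrac23|F|^2|X|^2+\(\tfrac13-\tfrac1{2(n-1)}\)|\iota_X F|^2 .
\]

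Next I would pair $\n F$ against $F\otimes X$ and invoke the mutual orthogonality of the three pieces from Lemma~\ref{oth2}, so that only like pieces pair: the divergence pieces contribute a multiple of $\<D^*F,\iota_X F\>$, the totally antisymmetric pieces a multiple of $\<DF,X\wedge F\>$, and the remainders the term $\<\breve F,\widetilde B\>$. When $F$ is closed the $DF$ term vanishes; for part~$(1)$ the hypothesis $D^*F=0$ also kills the divergence term, whereas for part~$(2)$ the divergence term survives and, after rewriting $\<D^*F,\iota_X F\>$ as the pairing of $X$ with the $1$-form $\<D^*F,F\>$, reproduces exactly the correction $\<D^*F,\tfrac{F}{|F|}\>$ in the statement. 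Collecting everything yields $\<X,W\>=\<\breve F,\widetilde B\>$ for a fixed $1$-form $W$ (independent of $X$), equal to $|F|\,\n|F|$ minus the stated multiple of $\<D^*F,\tfrac{F}{|F|}\>$, while Lemma~\ref{oth2} gives $|\breve F|^2=|\n F|^2-\tfrac1{2(n-1)}|D^*F|^2$ for closed $F$. Cauchy--Schwarz then gives $\<X,W\>\le|\breve F|\,|\widetilde B|$, and, exactly as in Lemma~\ref{Kato-ineq-1}, the arbitrariness of $X$ (taking $X=W$) upgrades this to $|W|^2\le c_n\,|\breve F|^2|F|^2$, i.e. $\tfrac1{c_n}\,|W|^2/|F|^2\le|\breve F|^2$, which is the asserted inequality once $c_n$ is identified.

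The crux, and the step I expect to be hardest, is to determine the sharp constant $c_n$ in $|\widetilde B|^2\le c_n|X|^2|F|^2$ and to explain the split between $n=3$ and $n\ge4$. The crude bound $|\iota_X F|^2\le|F|^2|X|^2$ (saturated only when $F$ has rank $2$) gives merely $c_n=1-\tfrac1{2(n-1)}$, which is strictly larger than the sharp value and hence yields a weaker Kato constant than claimed, and in fact no harmonic configuration saturates it. To obtain the sharp constants $\tfrac32$ ($n=3$) and $\tfrac{n-1}{n-2}$ ($n\ge4$) one must use that the operative direction is $X=W=\<\breve F,F\>$, so that $|\iota_W F|^2$ is not free but is tied to $\breve F$ through $F$; feeding in the defining conditions on $\breve F$ from Lemma~\ref{oth2} (the vanishing divergence $\sum_{j\neq i}\breve F_{ij,j}=0$ and the vanishing of the totally antisymmetric part) bounds this contraction well below the generic estimate. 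Equivalently, this amounts to computing the operator norm of the contraction map $\breve F\mapsto\<\breve F,F\>$ on the trace-free remainder space, which is precisely where the dimension dependence enters. The case $n=3$ must be singled out because there every $2$-form has rank $2$ --- indeed $F=*\alpha$ for a $1$-form $\alpha$, with $|\n F|=|\n\alpha|$ and $|\n|F||=|\n|\alpha||$ --- so that via the Hodge star the inequality reduces exactly to the $1$-form inequality of Lemma~\ref{Kato-ineq-1} with its constant $\tfrac{n}{n-1}=\tfrac32$, rather than to the formal value $\tfrac{n-1}{n-2}=2$ given by the general formula. Once the sharp $c_n$ is secured, substituting back and using $|\breve F|^2=|\n F|^2-\tfrac1{2(n-1)}|D^*F|^2$ delivers both parts simultaneously, the harmonic case $(1)$ being the specialization $D^*F=0$ of $(2)$.
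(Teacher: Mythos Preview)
Your overall strategy---decompose both $\nabla F$ and $F\otimes X$ via Lemma~\ref{oth2}, pair them, and apply Cauchy--Schwarz to the remainders---is exactly the paper's approach. The gap is in the arithmetic for $|\widetilde B|^2$, and this error sends you on an unnecessary detour.

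The norm of the divergence piece of $F\otimes X$ is $\tfrac{1}{n-1}|\iota_X F|^2$, not $\tfrac{1}{2(n-1)}|\iota_X F|^2$. With the correct value (and $|F\wedge X|^2=|F|^2|X|^2-|\iota_X F|^2$) one gets
\[
|\widetilde B|^2=\tfrac{2}{3}|F|^2|X|^2+\tfrac{n-4}{3(n-1)}|\iota_X F|^2.
\]
Now the dimension split is elementary: for $n\ge4$ the coefficient $\tfrac{n-4}{3(n-1)}\ge0$, so the crude bound $|\iota_X F|^2\le|F|^2|X|^2$ yields exactly $c_n=\tfrac{n-2}{n-1}$---this \emph{is} the sharp constant, not something weaker. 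For $n=3$ the coefficient equals $-\tfrac16<0$, so one simply drops the negative term and obtains $c_3=\tfrac23$ directly. No operator-norm analysis, no special choice $X=W$, and no Hodge-star reduction to the $1$-form case is needed. Once you have the correct $c_n$, Cauchy--Schwarz and the arbitrariness of $X$ finish both parts just as you described.
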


\begin{proof}
For any $C^1$-smooth 1-form $X\in \Om^1(E)$, we have
\begin{align*}
X\cdot\n|F||F|=\<\n F, F\otimes X\>.
\end{align*}
By Lemma \ref{oth2}, $\n F$ has the orthonormal decomposition
	\[\n F=\frac{1}{(n-1)\sqrt{2}}\de_{jk}(i)(D^*F)_idx^i\otimes dx^j\otimes dx^k+\frac{1}{\sqrt{3}}DF+\breve{F}.\]
Similarly, we can also show that $F\otimes X$ has the following orthonogal decomposition
\[F\otimes X=\frac{1}{n-1}\de_{jk}(i)\<F,X\>_idx^i\otimes dx^j\otimes dx^k+\frac{1}{\sqrt{3}}F\wedge X+\breve{B}.\]
	
It follows that
\begin{align*}
\<\n F, F\otimes X\>
=&\frac{1}{(n-1)\sqrt{2}}D^*F\cdot\<F,X\>+\frac{1}{3}\<DF,F\wedge X\>+\<\breve{F}, \breve{B}\>,
\end{align*}
where
\begin{align*}
	|\breve{F}|^2=&|\n F|^2-\frac{1}{2(n-1)}|D^*F|^2-\frac{1}{3}|DF|^2;\\
	|\breve{B}|^2=&|F|^2|X|^2-\frac{1}{n-1}|\<F,X\>|^2-\frac{1}{3}|F\wedge X|^2\\
	=&|F|^2|X|^2-\frac{1}{n-1}|\<F,X\>|^2-\frac{1}{3}(|F|^2|X|^2-|\<F,X\>|^2)\\
	=&\frac{2}{3}|F|^2|X|^2+\frac{n-4}{3(n-1)}|\<F,X\>|^2\\
	\leq &\frac{n-2}{n-1}|F|^2|X|^2
\end{align*}
if $n\geq 4$. While for $n=3$ we have
\[|\breve{B}|^2\leq \frac{2}{3}|F|^2|X|^2.\]

Consequently, we get
\begin{align*}
&\Big|X\cdot\n|F||F|-\frac{1}{(n-1)\sqrt{2}}D^*F\cdot\<F,X\>+\frac{1}{3}\<DF,F\wedge X\>\Big|^2\leq |\breve{F}|^2|\breve{B}|^2\\
\leq &
\begin{cases}
\frac{2}{3}|\breve{F}|^2|F|^2|X|^2	 \quad &n=3,\\[1ex]
\frac{n-2}{n-1}|\breve{F}|^2|F|^2|X|^2  \quad &n\geq 4.
\end{cases}	
\end{align*}
Since the above inequality holds for all $X$, the lemma follows.
\end{proof}

In the 3-dimensional case, we also have the following useful Kato inequality.
\begin{lem}[Proposition 4.17 in \cite{D23}]\label{Kato-ineq-3}
Let $F$ be a $C^1$ smooth closed 2-form in $\Om^2(E)$ with $n=3$. Then we have
\[\frac{3}{2}|\n |F||^2\leq |\n F|^2+|D^*F|^2.\]

Similarly, if $\om$ is a $C^1$ smooth coclosed 1-form in $\Om^1(E)$ with $n=3$, then we have
\[\frac{3}{2}|\n |\om||^2\leq |\n \om|^2+|D\om|^2.\]
\end{lem}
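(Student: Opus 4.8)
The plan is to prove both assertions in parallel by reducing each to a single sharp inequality in linear algebra. In dimension $n=3$ the Hodge star is a parallel isometry $*\colon\Om^2(E)\to\Om^1(E)$ that interchanges $D$ and $D^*$ while preserving $|\cdot|$ and $|\n\cdot|$, so a closed $2$-form $F$ corresponds to a coclosed $1$-form $\om=*F$ with $|D^*F|=|D\om|$; hence it suffices to treat the $1$-form statement, the $2$-form statement following by duality (alternatively one repeats the argument below using the decomposition of Lemma \ref{oth2} in place of Lemma \ref{orth1}). I would deliberately avoid feeding the refined Kato inequality of Lemma \ref{Kato-ineq-1} into a Cauchy--Schwarz: when the trace-free symmetric part of $\n\om$ dominates, that route discards too much slack and the resulting sufficient condition actually fails, even though the inequality itself holds.

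First I would fix a point and, for a coclosed $1$-form $\om$, record the pointwise data produced by Lemma \ref{orth1}. For each frame index $\beta$ form the matrix $M^\beta=(\om^\beta_{ij})$ and split it as $M^\beta=S^\beta+A^\beta$ into its symmetric and antisymmetric parts. Coclosedness $D^*\om=0$ forces $\tr S^\beta=0$, the orthogonal decomposition gives $|\n\om|^2=\sum_\beta(|S^\beta|^2+|A^\beta|^2)$ together with $|D\om|^2=2\sum_\beta|A^\beta|^2$, and $\n|\om|=\tfrac1{|\om|}\sum_\beta M^\beta\om^\beta$. In particular the combination $\sum_\beta|S^\beta|^2+3\sum_\beta|A^\beta|^2$ is exactly the target right-hand side $|\n\om|^2+|D\om|^2$.

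The heart of the matter is the pointwise estimate: for a trace-free symmetric $S$ and an antisymmetric $A$ on $\Real^3$ and any unit vector $\xi$,
\[\tfrac32\,|(S+A)\xi|^2\le|S|^2+3|A|^2,\qquad\text{equivalently}\qquad\tfrac32\,\|S+A\|_{\mathrm{op}}^2\le|S|^2+3|A|^2.\]
I would prove this in the frame $\xi=e_1$ by expanding: the off-diagonal contributions reorganize into perfect squares $\tfrac12(S_{1j}+3A_{1j})^2\ge0$ for $j=2,3$, while the diagonal term is absorbed through $\tfrac12 S_{11}^2\le S_{22}^2+S_{33}^2$, which is precisely the trace constraint $S_{11}=-(S_{22}+S_{33})$. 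This is the step I expect to be the main obstacle: the cross term between $S$ and $A$ cannot be separated by Cauchy--Schwarz (that would demand $|S|\lesssim|A|$), so one must keep $(S+A)\xi$ intact and exploit tracelessness, and the sharp constant $\tfrac32$ is forced by the eigenvalue identity $\lambda_{\max}(S)^2\le\tfrac23|S|^2$ for trace-free symmetric $3\times3$ matrices.

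Finally I would thread the bundle index through by combining this pointwise inequality with a Cauchy--Schwarz in $\beta$ rather than arguing componentwise, which is essential because different $\beta$ contribute in different spatial directions. From $|\n|\om||^2=\tfrac1{|\om|^2}\big|\sum_\beta M^\beta\om^\beta\big|^2$ and $\big|\sum_\beta M^\beta\om^\beta\big|\le\sum_\beta\|M^\beta\|_{\mathrm{op}}|\om^\beta|\le\big(\sum_\beta\|M^\beta\|_{\mathrm{op}}^2\big)^{1/2}\big(\sum_\beta|\om^\beta|^2\big)^{1/2}$ one gets $|\n|\om||^2\le\sum_\beta\|M^\beta\|_{\mathrm{op}}^2$; multiplying by $\tfrac32$ and applying the operator-norm inequality termwise yields $\tfrac32|\n|\om||^2\le\sum_\beta(|S^\beta|^2+3|A^\beta|^2)=|\n\om|^2+|D\om|^2$. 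A check on the example with two frame indices and $\om^1\perp\om^2$ shows that the Cauchy--Schwarz step and the pointwise inequality are simultaneously saturated, which confirms that the constant $\tfrac32$ cannot be improved.
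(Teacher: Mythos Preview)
The paper does not supply a proof of this lemma; it is quoted from \cite{D23} (Proposition~4.17) and stated without argument, so there is nothing in the paper to compare your proposal against.

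Your argument is correct. The Hodge-duality reduction is legitimate in the flat setting of the paper: on $B_1\subset\Real^3$ the star is parallel and intertwines $D$ with $D^*$ on $E$-valued forms, so a closed $2$-form $F$ gives a coclosed $1$-form $\om=*F$ with $|\n F|=|\n\om|$, $|F|=|\om|$ and $|D^*F|=|D\om|$. Your alternative of redoing the computation with Lemma~\ref{oth2} would also work. The pointwise core $\tfrac32\|S+A\|_{\mathrm{op}}^2\le|S|^2+3|A|^2$ for trace-free symmetric $S$ and antisymmetric $A$ in $M_{3\times3}(\Real)$ checks out exactly as you say: with $\xi=e_1$ the off-diagonal terms regroup as $\tfrac12(S_{1j}\pm 3A_{1j})^2\ge0$ (the sign depends only on the row/column convention for $Me_1$), and the diagonal remainder $-\tfrac12 S_{11}^2+S_{22}^2+S_{33}^2\ge0$ is precisely $S_{11}=-(S_{22}+S_{33})$. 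The Cauchy--Schwarz over the frame index $\beta$ then gives $|\n|\om||^2\le\sum_\beta\|M^\beta\|_{\mathrm{op}}^2$, and summing the pointwise bound yields the claim because, by Lemma~\ref{orth1} and the identity $|D\om|^2=2\sum_\beta|A^\beta|^2$, one has $|\n\om|^2+|D\om|^2=\sum_\beta(|S^\beta|^2+3|A^\beta|^2)$. Your warning about Lemma~\ref{Kato-ineq-1} is also accurate: expanding $\tfrac32\big|\n|\om|-\tfrac12 D\om\cdot\tfrac{\om}{|\om|}\big|^2\le|\n\om|^2-\tfrac12|D\om|^2$ and absorbing the cross term by Young forces a factor $1+\ep$ in front of $|\n\om|^2$, so that route cannot reach the sharp constant $\tfrac32$.
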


\medskip
\subsection{Improved Kato inequalities for YMH fields}
Now we can apply Lemma \ref{Kato-ineq-1}-\ref{Kato-ineq-3} to obtain the following almost optimal Kato inequalities for YMH fields.

\begin{thm}\label{Kato-ineq-4}
Let $B_1\times N\to B_1$ be a trivial fiber bundle with compact structure group $G$, where $N$ is a compact Riemannian manifold. Suppose that $(A,u)$ is a smooth YMH field on $B^*_{2R_0}$, then for any $\de>0$, there exists a constant $C(n, \de)>0$ such that we have
\begin{itemize}
\item[$(1)$] the section $u$ satisfies
\begin{align}
|\n_A \n_A u|^2
\geq &(\frac{n}{n-1}-\de)|\n |\n_Au||^2-C(n, \de)(1+|F_A|^2);\label{Kato-ineq-4-1}
\end{align}
\item[$(2)$] the curvature $F_A$ of $A$ satisfies
\begin{equation}\label{Kato-ineq-4-2}
\begin{cases}
|\n F_A|^2\geq \frac{3}{2}|\n |F_A||^2-|u^*(\n_A u)|^2 \quad &n=3,\\[1ex]
|\n F_A|^2\geq (\frac{n-1}{n-2}-\de)|\n |F||^2-C(n, \de)|u^*(\n_A u)|^2 \quad &n\geq 4.
\end{cases}	
\end{equation}
\end{itemize}
\end{thm}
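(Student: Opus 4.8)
The plan is to regard both $\n_A u$ and $F_A$ as differential forms with values in a vector bundle equipped with a metric-compatible connection, feed them into the abstract Kato inequalities of Lemmas \ref{Kato-ineq-1}--\ref{Kato-ineq-3}, and then invoke the YMH equations \eqref{eq-YMH1} together with the Bianchi identity to rewrite the exterior-derivative and codifferential terms as geometric quantities bounded in terms of $|F_A|$ and the geometry of $N$. The only genuine loss comes from absorbing the cross terms produced by the lemmas, which is handled by Young's inequality and accounts for the parameter $\de$.

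\emph{Part (1).} View $\om:=\n_A u$ as a $1$-form with values in $u^*TN$, carrying the pullback Levi-Civita connection coupled with $A$; this connection is metric-compatible, so $|\n\om|^2=|\n_A\n_A u|^2$ and $|\n|\om||=|\n|\n_A u||$, and Lemma \ref{Kato-ineq-1} applies. The first step is to identify the two derivative terms. The commutation identity $\n_{A_i}u^\al_j-\n_{A_j}u^\al_i=(F_{ij}\cdot u)^\al$ already recorded in the derivation of \eqref{bch1} gives $D_A\om=F_A\cdot u$, while the definition of the codifferential and the second equation in \eqref{eq-YMH1} give $D^*_A\om=\n^*_A\n_A u=-\n V(u)$. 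Since $N$ is compact and $V$ is smooth, these satisfy $|D_A\om|\le C|F_A|$ and $|D^*_A\om|\le C$. Discarding the nonnegative terms $\frac{1}{n}(D^*\om)^2$ and $\frac{1}{2}|D\om|^2$ in Lemma \ref{Kato-ineq-1} yields
\[|\n_A\n_A u|^2\ge \frac{n}{n-1}\Big|\n|\n_A u|-b\Big|^2,\qquad b:=\frac{D^*_A\om}{n}\frac{\om}{|\om|}+\frac{1}{2}D_A\om\cdot\frac{\om}{|\om|},\]
where $|b|\le \frac{1}{n}|D^*_A\om|+\frac{1}{2}|D_A\om|\le C(1+|F_A|)$. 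The final step is to absorb the cross term: by Young's inequality $|\n|\n_A u|-b|^2\ge(1-\eta)|\n|\n_A u||^2-\eta^{-1}|b|^2$ for any $\eta\in(0,1)$; choosing $\eta$ so that $\frac{n}{n-1}(1-\eta)=\frac{n}{n-1}-\de$ and using $|b|^2\le C(1+|F_A|^2)$ produces \eqref{Kato-ineq-4-1} with a constant $C(n,\de)$.

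\emph{Part (2).} The Bianchi identity $D_AF_A=0$ says $F_A$ is a closed $2$-form valued in $\P\times_{ad}\g$, and the first equation of \eqref{eq-YMH1} gives $D^*_AF_A=-u^*\n_A u$, hence $|D^*_AF_A|^2=|u^*\n_A u|^2$. When $n=3$, the closed case of Lemma \ref{Kato-ineq-3} reads $\frac{3}{2}|\n|F_A||^2\le|\n F_A|^2+|D^*_AF_A|^2$, and substituting $|D^*_AF_A|^2=|u^*\n_A u|^2$ gives exactly the first line of \eqref{Kato-ineq-4-2}, with no loss (this sharp form is why no $\de$ enters in dimension three). When $n\ge4$, I use the closed case of Lemma \ref{Kato-ineq-2}(2), which gives $|\n F_A|^2\ge\frac{n-1}{n-2}|\n|F_A|-c|^2+\frac{1}{2(n-1)}|D^*_AF_A|^2$ with $c:=\frac{1}{(n-1)\sqrt{2}}\<D^*_AF_A,F_A/|F_A|\>$ obeying $|c|\le\frac{1}{(n-1)\sqrt{2}}|D^*_AF_A|$; the same Young's-inequality bookkeeping as above, trading an $\eta$-fraction of the Kato constant, yields the second line of \eqref{Kato-ineq-4-2} with error $C(n,\de)|u^*\n_A u|^2$.

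The routine portion is the Young-inequality absorption of the cross terms. The only substantive input is the identification of $D_A\om$, $D^*_A\om$, and $D^*_AF_A$ with $F_A\cdot u$, $\n V(u)$, and $u^*\n_A u$, which is precisely where the YMH equations and the curvature commutation formula enter; compactness of $N$ and smoothness of $V$ are what upgrade these identifications into the legitimate bounds $|b|^2\le C(1+|F_A|^2)$ and $|c|^2\le C|u^*\n_A u|^2$, so that the error terms are controlled rather than being new unknowns. I therefore expect no real obstacle beyond keeping the constants consistent when $\de$ is taken small.
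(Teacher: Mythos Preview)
Your proposal is correct and follows essentially the same route as the paper: identify $D_A(\n_A u)=F_A\cdot u$, $D^*_A(\n_A u)=-\n V(u)$, $D_AF_A=0$, $D^*_AF_A=-u^*\n_A u$, feed these into Lemmas~\ref{Kato-ineq-1}--\ref{Kato-ineq-3}, and use compactness of $N$ to bound $|\n V(u)|+|u|$. Your explicit Young's-inequality absorption of the cross terms is exactly what the paper leaves implicit when it passes from the lemmas to the stated inequalities with parameter $\de$.
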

\begin{proof}
 Since $D_A u$ satisfies
\[D^*_A D_A u=\n V(u),\quad D_AD_A u=F_A\cdot u,\]
we can apply Lemma \ref{Kato-ineq-1} to get
\begin{align*}
	|\n_A \n_A u|^2\geq &(\frac{n}{n-1}-\de)|\n |\n_Au||^2-C(n, \de)(|\n V(u)|^2+|F_A\cdot u|^2)\\
	\geq &(\frac{n}{n-1}-\de)|\n |\n_Au||^2-C(n, \de)(1+|F_A|^2),
\end{align*}
for any $\de>0$, where have used the fact that $N$ is compact to show
\[|\n V(u)|+|u|\leq C.\]

Next, by using the fact that $F_A$ satisfies the following equation
\[D^*_A F_A=u^*(\n_A u),\quad D_AF_A=0,\]
we can apply Lemmas \ref{Kato-ineq-2}-\ref{Kato-ineq-3} to get the desired inequality \eqref{Kato-ineq-4-2}.
\end{proof}

\subsection{Differential inequalities of 3d YMH fields}

By applying the above Kato inequalities, we can get the differential inequalities for 3d YMH fields, which plays a key role in our proof of the main theorems.

\begin{lem}\label{diff-ineq-YMH}
Suppose $(A,u)$ is a smooth YMH field on $B^*_{R_0}$. Then for any $\de\in (0,\frac{1}{2})$, there exists a universal constant $C$ and a constant $C_\de>0$ such that the function $(f,g)=((|F_A|^2+1)^{1/4},(|\n_A u|^2+1)^{\frac{1}{2}(\frac{1}{2}+\de)})$ satisfies
\begin{align}
&\De f+C(|\n_A u|^2+f^2)f\geq 0,\label{eq1}\\
&\De g+C_\de(|F_A|^2+|\n_A u|^2+1)g\geq 0\label{eq3}
\end{align}
on $B^*_{R_0}$.

Or equivalently, in cylindrical coordinates $(t=-\log r,\th)$, we have
\begin{align}
&\p^2_tf-\p_t f+\De_{\mathbb{S}^2}f+Ce^{-2t}(|\n_A u|^2+f^2)f\geq 0,\label{eq2}\\
&\p^2_tg-\p_t g+\De_{\mathbb{S}^2}g+C_\de e^{-2t}(|F_A|^2+|\n_A u|^2+1)g\geq 0.\label{eq4}
\end{align}
\end{lem}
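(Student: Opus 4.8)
The plan is to combine the Bochner inequalities of Lemma~\ref{bochner} (in their sharper forms \eqref{bch2} and \eqref{ineq2}) with the almost optimal Kato inequalities of Theorem~\ref{Kato-ineq-4}, and to exploit a precise cancellation of gradient terms that occurs because the exponents defining $f$ and $g$ are tuned to the Kato constants. Throughout I would work on the open set where $F_A\neq 0$ (resp. $\n_A u\neq 0$), where $|F_A|$ (resp. $|\n_A u|$) is smooth; the resulting inequalities for the smooth functions $f$ and $g$ then extend across the vanishing locus by continuity, which is the only mildly technical point.

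First I treat the curvature. Set $w=|F_A|^2+1$, so that $f=w^{1/4}$ and $|\n w|=2|F_A|\,|\n|F_A||$. Feeding the $3$d curvature Kato inequality \eqref{Kato-ineq-4-2}, namely $|\n F_A|^2\geq\frac32|\n|F_A||^2-|u^*\n_A u|^2$, into the Bochner inequality \eqref{ineq2}, and using $|u^*\n_A u|^2\leq C|\n_A u|^2$ together with $|F_A\cdot u|^2\geq 0$ (both from the compactness of $N$), I obtain
\begin{equation*}
\De w\geq 3|\n|F_A||^2-C|\n_A u|^2-C|F_A|\,|\n_A u|^2-C|F_A|^3.
\end{equation*}
Since $|F_A|^2\leq w$, the gradient term dominates $\frac{3}{4w}|\n w|^2$. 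The chain rule gives $\De f=\frac14 w^{-3/4}\De w-\frac3{16}w^{-7/4}|\n w|^2$, and the positive contribution $\frac14 w^{-3/4}\cdot\frac3{4w}|\n w|^2=\frac3{16}w^{-7/4}|\n w|^2$ cancels the second term exactly. This is the heart of the matter: the exponent $\frac14$ is precisely the borderline value $1-\frac{\mu}{2}$ for the Kato constant $\mu=\frac32$. What remains, after the elementary bounds $w^{-3/4}|\n_A u|^2\leq f|\n_A u|^2$, $w^{-3/4}|F_A|\,|\n_A u|^2\leq f|\n_A u|^2$ and $w^{-3/4}|F_A|^3\leq w^{3/4}=f^3$, is exactly an estimate of the form $\De f\geq -C(|\n_A u|^2+f^2)f$, which is \eqref{eq1}.

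The Higgs term is analogous but uses the free parameter in Theorem~\ref{Kato-ineq-4}. Set $v=|\n_A u|^2+1$, so $g=v^{b}$ with $b=\frac12(\frac12+\de)=\frac14+\frac\de2$. Inserting the section Kato inequality \eqref{Kato-ineq-4-1} with its parameter chosen equal to $\de$, i.e. $|\n_A\n_A u|^2\geq(\frac32-\de)|\n|\n_A u||^2-C_\de(1+|F_A|^2)$, into the Bochner inequality \eqref{bch2} and dropping $|u^*\n_A u|^2\geq0$ yields
\begin{equation*}
\De v\geq(3-2\de)|\n|\n_A u||^2-C_\de(1+|F_A|^2)-C(1+|F_A|+|\n_A u|^2)|\n_A u|^2.
\end{equation*}
The chain rule $\De g=b v^{b-1}\De v+b(b-1)v^{b-2}|\n v|^2$ then produces a net gradient coefficient proportional to $\frac{3-2\de}{4}+b-1$, which vanishes exactly for $b=\frac14+\frac\de2$; this explains both the precise exponent in the definition of $g$ and why the improvement of the Kato constant to be arbitrarily close to $\frac32$ is essential, since a weaker constant would force a larger exponent and spoil the decay rate obtained later. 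The surviving lower order terms are controlled using $v\geq 1$, $v^{b-1}\leq v^b$, and Young's inequality (e.g. $|F_A|\,|\n_A u|^2\leq C(|F_A|^2+|\n_A u|^4)$), which gives $\De g\geq -C_\de(|F_A|^2+|\n_A u|^2+1)g$, namely \eqref{eq3}.

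Finally, the equivalent forms \eqref{eq2} and \eqref{eq4} follow from the standard conformal rewriting of the flat Laplacian on $\Real^3\setminus\{0\}$: with $t=-\log r$ one computes $\De\phi=e^{2t}(\p^2_t\phi-\p_t\phi+\De_{\mathbb{S}^2}\phi)$, so multiplying \eqref{eq1} and \eqref{eq3} by $e^{-2t}$ reproduces the cylindrical inequalities verbatim. The hard part will be arranging the two gradient-term cancellations to come out exactly (i.e. matching exponents to Kato constants and choosing the Kato parameter equal to $\de$); once that is in place, everything else is a routine bounded-coefficient estimate relying on the compactness of $N$ and on $w,v\geq1$.
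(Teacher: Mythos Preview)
Your proposal is correct and follows essentially the same route as the paper: combine the Bochner formulas \eqref{bch2}, \eqref{ineq2} with the Kato inequalities of Theorem~\ref{Kato-ineq-4}, and choose the exponent $p$ (your $1/4$ and $b=\frac14+\frac\de2$) so that the negative chain-rule gradient term is dominated by the positive Kato contribution. The paper packages the same computation via the identity $\De h^p=ph^{p-2}\big(\tfrac12\De h^2+(p-2)|\n h|^2\big)$ applied to $h=\sqrt{|F_A|^2+1}$ and $h=\sqrt{|\n_A u|^2+1}$, observing that the net gradient coefficient $\frac32+p-2$ (resp.\ $\frac32-\de+p-2$) is nonnegative for $p=\frac12$ (resp.\ $p=\frac12+\de$); your direct chain-rule computation on $w^{1/4}$ and $v^{b}$ is the same calculation in slightly different notation.
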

\begin{proof}
First we derive the differential inequality \eqref{eq1} and hence \eqref{eq2}.

Let $p>0$. For any smooth function $f$ on $B^*_{2R_0}$, we have
\begin{equation}\label{eq-f}
\De f^p=pf^{p-2}\(\frac{1}{2}\De f^2+\frac{p-2}{4}\frac{|\n f^2|^2}{f^2}\).	
\end{equation}
By letting $f=\sqrt{|F_A|^2+1}$ in equation \eqref{eq-f} and $p\in (0,2)$, we get
\begin{align*}
\De f^p=&pf^{p-2}\(\frac{1}{2}\De |F_A|^2+\frac{p-2}{4}\frac{|\n |F_A|^2|^2}{|F_A|^2+1}\)\\
\geq& pf^{p-2}(\frac{1}{2}\De |F_A|^2+(p-2)|\n |F_A||^2).
\end{align*}

Then the Bochner formula \eqref{ineq2} for $F_A$ and the Kato inequality \eqref{Kato-ineq-4-2} Lemma \ref{Kato-ineq-4} together implies
\begin{align*}
\frac{1}{2}\De |F_A|^2 \geq &|\n_A F_A|^2-C|F_A||\n_Au|^2-C(1+|F_A|)|F_A|^2\\
\geq &\frac{3}{2}|\n |F_A||^2-C(1+|F_A|)(|\n_Au|^2+|F_A|^2).
\end{align*}
Therefore, by choosing $\frac{1}{2}\leq p<2$, we get
\begin{align*}
\De f^p\geq &pf^{p-2}(\frac{3}{2}+p-2)|\n |F_A||^2-Cpf^{p-2}(1+|F_A|)(|\n_Au|^2+|F_A|^2)\\
\geq & -Cf^{p-1}(|\n_A u|^2+f^2)\\
\geq &-C(|\n_A u|^2+f)f^p,
\end{align*}
where we have used the facts that $|F_A|+1\leq \sqrt{2}f$ and $f\geq 1$. Then inequality \eqref{eq1} follows by taking $p=\frac{1}{2}$.

Next we derive the differential inequality \eqref{eq3} and hence \eqref{eq4}.

Replacing $f$ with $g=\sqrt{|\n_A u|^2+1}$ in formula \eqref{eq-f} and $p\in (0,2)$, we obtain
\[\De g^p\geq pg^{p-2}(\frac{1}{2}\De |\n_A u|^2+(p-2)|\n |\n_A u||^2).\]
For any $\de>0$, the Bochner formula \eqref{bch2} and the Kato inequality \eqref{Kato-ineq-4-1} in Lemma \ref{Kato-ineq-4} together gives
\begin{align*}
\frac{1}{2}\De|\n_A u|^2
\geq &|\n_A \n_A u|^2+|u^*(\n_Au)|^2-C(1+|F_A|+|\n_A u|^2)|\n_A u|^2\\
\geq &(\frac{3}{2}-\de)|\n |\n_Au||^2-C_{\de}(1+|F_A|^2+|\n_A u|^2)(|\n_A u|^2+1)\\
=&(\frac{3}{2}-\de)|\n |\n_Au||^2-C_{\de}(|F_A|^2+g^2)g^2.
\end{align*}

Therefore, by choosing $2>p\geq \frac{1}{2}+\de$ for any small $\de>0$, we get
\begin{align*}
\De g^p\geq &pg^{p-2}(p-\de-1/2)|\n |\n_A u||^2-C_{\de}pg^{p-2}(|F_A|^2+g^2)g^2\\
\geq & -C_{\de}(|F_A|^2+g^2)g^p.
\end{align*}
Taking $p=\frac{1}{2}+\de$, we get the desired inequality \eqref{eq3}.
\end{proof}

\medskip
\section{Decay estimates of 3d YMH fields: energy bound I}\label{s:ebound-I}

In this section, we derive the desired decay estimates for 3d YMH fields and prove the main Theorem~\ref{decay-YMH} under energy bound I \eqref{Mor-c0}.

\subsection{Improved regularity of $F_A$}

For 3d YMH fields satisfying energy bound I, we first observe that the curvature $F_A$ actually satisfies an estimate that is stronger than the standard $\ep$-regularity theorem (Theorem~\ref{ep-reg}). For this purpose, we first recall the broken Coulomb gauge constructed by Uhlenbeck in \cite{U82'} (see also \cite{S84}). For any fixed number $1<\tau<\frac{5}{4}$, let
\begin{align*}
	\Om_i=&\{x|\,\, \tau^{-i-1}<r\leq \tau^{-i} \},\\
	S_i=&\{x|\,\, r=\tau^{-i}\}.
\end{align*}
Then we have
\[B^*_{1}=\cup_{i=0}^\infty \Om_i.\]
\begin{lem}\label{Bro-gauge}
There exists an $\ep'_0>0$ such that for any connection $A$ satisfying $r^2|F_A|(x)\leq \ep'_0$ for all $x\in B^*_{1}$, there exists a continuous gauge transformation on $B^*_1$ such that the following properties hold for any $i\geq 0$:
\begin{itemize}
\item[$(1)$] $d^*A^i=0$ in $\Om^i$, where we set $A^i=A|_{\Om_i}$;
\item[$(2)$] $A^i_\th |_{S_i}=A^{i-1}_\th |_{S_i}$;
\item[$(3)$] $d^*_\th A^i_\th=0$ on $S_i$ and $S_{i+1}$;
\item[$(4)$] $\int_{S_i} A^i_rd\th=\int_{S_{i+1}}A^i_rd\th=0$;
\item[$(5)$] $|A_i|(x)\leq C\ep'_0\tau^{i+1}$;
\item[$(6)$] $\int_{\Om_i}|A^i|^2dx\leq \frac{1}{(\mu-C\ep'_0)\tau^{2(i+1)}}\int_{\Om_i}|F_{A^i}|^2dx$, where $\mu\geq 2$;
\item[$(7)$] $\int_{S_0}|A^0_\th|^2dV_{S_0}\leq \frac{1}{(\nu-C\ep'_0)}\int_{S_0}|F_A|^2dV_{S_0}$,
where $\nu=2$ is the first eigenvalue of Hodge Laplace operator for coclosed 1-form on $\U^2$.
\end{itemize}
Here we set $A=A_rdr+A_\th$, where $(r,\th)$ is the polar coordinates for $B^*_1$.
\end{lem}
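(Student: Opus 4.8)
The plan is to follow the inductive annulus-by-annulus construction of Uhlenbeck~\cite{U82'} and Sibner~\cite{S84}, exploiting the conformal invariance of the hypothesis. First I would rescale: on the shell $\Om_i$ the change of variables $y=\tau^i x$ carries $\Om_i$ onto the fixed model annulus $\Om_0=\{\tau^{-1}<|y|\leq 1\}$, and under this map a connection $A$ transforms to $\tilde A(y)=\tau^{-i}A(\tau^{-i}y)$ with curvature $\tilde F=\tau^{-2i}F$. The scale-invariant bound $r^2|F_A|\leq\ep'_0$ then reads $|y|^2|\tilde F|\leq\ep'_0$ on $\Om_0$, so that $\|\tilde F\|_{L^\infty(\Om_0)}\leq C\ep'_0$ uniformly in $i$ (since $|y|\geq\tau^{-1}$ is bounded below). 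This reduces every assertion to a single model-annulus problem, independent of $i$, with the powers of $\tau$ appearing in $(5)$--$(7)$ generated purely by scaling back.

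On the model annulus I would invoke Uhlenbeck's gauge-fixing theorem: when the curvature is sufficiently small there is a gauge transformation placing $A$ in the Coulomb gauge $d^*A=0$ in the interior, together with the natural boundary conditions (the coclosedness $d^*_\th A_\th=0$ and the mean-zero condition $\int A_r\,d\th=0$ on each bounding sphere) and the basic estimate $\|A\|_{W^{1,2}}\leq C\|F_A\|_{L^2}$. The global gauge is then built by induction on $i$. Having fixed the gauge on $\Om_{i-1}$, the tangential trace $A^{i-1}_\th|_{S_i}$ is determined and coclosed; on $\Om_i$ I would solve the Coulomb boundary value problem with this tangential datum prescribed on the inner sphere $S_i$ and the natural condition imposed on the outer sphere $S_{i+1}$, which forces the matching condition $(2)$ and realizes $(1)$, $(3)$, $(4)$. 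Because only the tangential components are required to agree across $S_i$, the radial component $A_r$ may jump there, so the resulting global gauge transformation is merely continuous; this is the sense in which the gauge is \emph{broken}.

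The quantitative estimates come next. For $(6)$ I would use the Poincar\'e/eigenvalue inequality on the model annulus for Coulomb $1$-forms satisfying the prescribed boundary conditions: writing $F_A=dA+\frac12[A,A]$ and using $\|dA\|_{L^2}\geq\sqrt{\mu}\,\|A\|_{L^2}$, where $\mu$ is the first eigenvalue of the associated $d$--$d^*$ system, while bounding the quadratic term by $\|[A,A]\|_{L^2}\leq C\ep'_0\|A\|_{L^2}$ via the pointwise smallness of $A$, one absorbs the nonlinearity to get $\|A\|_{L^2}\leq(\mu-C\ep'_0)^{-1/2}\|F_A\|_{L^2}$; scaling back produces the factor $\tau^{-2(i+1)}$. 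Estimate $(7)$ is the exact analogue on the single sphere $S_0\cong\mathbb{S}^2$, where the relevant eigenvalue of the Hodge Laplacian on coclosed $1$-forms is $\nu=2$. The pointwise bound $(5)$ then follows by elliptic bootstrapping from the $L^2$ bound and the curvature bound for the Coulomb system $d^*A=0$, $dA=F_A-\frac12[A,A]$ on the model annulus, upgrading to an $L^\infty$ bound on $\tilde A$ and scaling back to yield $|A_i|\leq C\ep'_0\tau^{i+1}$.

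I expect the main obstacle to be the inductive matching in the second step: on each shell one must solve a mixed boundary value problem for the gauge transformation that simultaneously enforces the interior Coulomb condition, prescribes the tangential trace on $S_i$ to agree with the previous shell, and imposes the natural conditions on $S_{i+1}$, all while keeping the transformations uniformly controlled through the scaling so that $(5)$--$(7)$ hold with constants independent of $i$ and the smallness threshold $\ep'_0$ does not degenerate along the induction. The crux is ensuring that the quadratic term $[A,A]$ can be absorbed at every stage, so that the eigenvalue constants $\mu$ and $\nu$ are perturbed only by $O(\ep'_0)$ and the gain encoded in conditions $(6)$ and $(7)$ survives uniformly in $i$.
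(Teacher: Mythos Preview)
The paper does not give its own proof of this lemma: it states the result and refers the reader to Uhlenbeck~\cite{U82'} (see also Sibner~\cite{S84}) for the construction of the broken Coulomb gauge. Your proposal is a faithful outline of exactly that construction --- rescaling each shell to a fixed model annulus, applying Uhlenbeck's Coulomb gauge with the mixed boundary conditions, and propagating the gauge inductively by matching tangential traces --- so there is nothing to contrast. Your identification of the inductive matching and the uniform absorption of the quadratic term $[A,A]$ as the technical crux is on target; these are precisely the points handled in the cited references, and the specific eigenvalue constants $\mu\geq 2$ and $\nu=2$ are computed there for the three-dimensional geometry.
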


\begin{lem}\label{decay-F1}
Assume that $(A,u)$ is a smooth YMH field on $B^*_{R_0}$, which satisfies
\begin{equation}\label{es-YMH*}
	r^2(|\n_A u|^2+|F_A|)(x)\leq \ep'_0.
\end{equation}	
for any $x\in B^*_{R_0}$, where $\ep'_0$ is given in Lemma \ref{Bro-gauge}. Then there exists a positive constant $\kappa_0\in [\frac{1}{16},1)$ such that
\begin{equation}\label{eq:lem4.2}
  r^2|F_A|\leq C\ep'_0\(\frac{r}{R_0}\)^{2\kappa_0}
\end{equation}
for any $x\in B^*_{R_0/2}$.
\end{lem}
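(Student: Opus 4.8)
The plan is to exploit the broken Coulomb gauge of Lemma~\ref{Bro-gauge} to run an energy iteration over the annuli $\Om_i$, and then to upgrade the resulting integral decay to the pointwise bound \eqref{eq:lem4.2} through the $\ep$-regularity theorem. After rescaling $B^*_{R_0}$ to $B^*_1$, the hypothesis \eqref{es-YMH*} gives $r^2|F_A|\le \ep'_0$, so Lemma~\ref{Bro-gauge} applies and furnishes a continuous gauge in which properties $(1)$--$(7)$ hold on every $\Om_i$. Writing $A=A_r\,dr+A_\th$, I introduce the annular curvature energy $E_i=\int_{\Om_i}|F_A|^2\,dx$ together with its tail $\mathcal{E}_i=\sum_{j\ge i}E_j$, and aim to prove geometric decay of the scale-invariant quantity $r_iE_i$ with $r_i=\tau^{-i}$.

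The heart of the argument is an energy recursion. On $\Om_i$ the Yang--Mills equation reads $d^*F_{A^i}=-[A^i\lrcorner F_{A^i}]-u^*\n_A u$, so writing $|F_A|^2=\langle F_{A^i}, dA^i+\tfrac12[A^i,A^i]\rangle$ and integrating by parts turns the bulk into $\int_{\Om_i}\langle d^*F_{A^i},A^i\rangle$ plus boundary integrals over $S_i$ and $S_{i+1}$. The gauge conditions $(1)$--$(4)$ eliminate the radial boundary contributions and the normalization of $A_r$, so the boundary terms are controlled by $\int_{S_i}|A^i_\th|^2$ and $\int_{S_{i+1}}|A^i_\th|^2$. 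The cubic term is bounded by $C\ep'_0E_i$ using the pointwise estimate $(5)$, the term $\int\langle[A\lrcorner F],A\rangle$ is absorbed using $(6)$, and the Higgs coupling is estimated by $|u^*\n_A u|\le C|\n_A u|\le C\sqrt{\ep'_0}\,r^{-1}$, which yields $\int_{\Om_i}|u^*\n_A u|^2\le C\ep'_0\,r_i$. Finally, the eigenvalue estimate $(7)$ with $\nu=2$ converts the boundary $A_\th$-energy into tangential curvature energy with a strictly contracting constant, so that collecting terms produces a difference inequality of the form $\mathcal{E}_{i+1}\le \theta\,\mathcal{E}_i+C\ep'_0\,r_i$ with contraction ratio $\theta<1$; tracking the dependence of $\theta$ on $\nu$ and $\ep'_0$ pins down a rate $\kappa_0\in[\tfrac1{16},1)$.

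Solving this recursion gives the geometric decay $r_iE_i\le C\ep'_0(r_i/R_0)^{4\kappa_0}$, i.e. $r\int_{B_r\setminus B_{r/2}}|F_A|^2\le C\ep'_0(r/R_0)^{4\kappa_0}$. To reach the pointwise conclusion \eqref{eq:lem4.2}, I rescale each annulus to unit size, where the $L^2$ curvature energy equals $r_iE_i$; interpolating the first-power Morrey energy $\int|F_A|$ between the a priori bound $r^2|F_A|\le\ep'_0$ and this $L^2$ decay shows the rescaled field satisfies the hypothesis of Lemma~\ref{ep-reg} with a Morrey norm of size $C\ep'_0(r_i/R_0)^{2\kappa_0}$. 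The $\ep$-regularity conclusion (equivalently the interior Nash--Moser estimate coming from the Bochner inequality in Lemma~\ref{bochner}$(2)$) then bounds $r^2|F_A|$ on $\Om_i$ by $C\ep'_0(r/R_0)^{2\kappa_0}$, which is exactly \eqref{eq:lem4.2}.

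The main obstacle is closing the recursion in the presence of the Higgs coupling and the broken gauge. Because $A$ is only continuous across the spheres $S_i$, the boundary integrals from adjacent annuli must be matched through conditions $(2)$--$(4)$ rather than integrated by parts freely, and the source term $u^*\n_A u$ must be shown to decay no slower than the geometric rate so that it does not overwhelm the contraction --- this is precisely where the pointwise control $r^2|\n_A u|^2\le\ep'_0$ built into \eqref{es-YMH*} is indispensable, and where the admissible lower bound $\kappa_0\ge\tfrac1{16}$ is extracted from the interplay between the spectral gap $\nu=2$, the smallness of $\ep'_0$, and the decay of the coupling.
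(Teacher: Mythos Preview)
Your strategy is sound and uses the same three pillars as the paper --- the broken Coulomb gauge of Lemma~\ref{Bro-gauge}, an energy identity obtained by writing $|F_A|^2=\langle F_{A^i},D_{A^i}A^i-\tfrac12[A^i,A^i]\rangle$ and integrating by parts, and a Nash--Moser step to pass from integral to pointwise control. The route, however, is genuinely different. You set up a \emph{discrete recursion} for the unweighted annular energies $E_i=\int_{\Om_i}|F_A|^2$ and aim for a contraction $\mathcal{E}_{i+1}\le\theta\,\mathcal{E}_i+C\ep'_0 r_i$, which is the Uhlenbeck-style iteration from the $4$d argument. The paper instead introduces the \emph{weighted} integral $I(R)=\int_{B_R}r^\al|F_A|^2\,dx$ with $\al$ slightly larger than $1$; after summing over all $\Om_i$ the interior boundary terms telescope (via property~(2)) and vanish at the origin, leaving only $\int_{\p B_R}r^\al|F_A|^2$. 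This produces the continuous differential inequality $I(R)\le C\ep'_0 R^{\al+3}+bRI'(R)$ with $b=8/(2-C\ep'_0)$, whose solution gives $I(R)\lesssim R^{1/b}$ and hence $2\kappa_0=\tfrac{1-\al}{2}+\tfrac{1}{2b}$ explicitly. The weighted approach buys a clean telescoping sum (property~(7) is never actually used), a transparent ODE, and a direct formula for $\kappa_0$; your recursion would require a scaled version of (7) on \emph{every} $S_i$ and careful matching of boundary terms across adjacent annuli, which is workable but heavier in bookkeeping. For Step~2, the paper does not re-enter Lemma~\ref{ep-reg} but applies the Nash--Moser estimate directly to the Bochner inequality $\De|F_A|\ge -Cr^{-2}\ep'_0(|F_A|+1)$ with an $L^p$ norm for small $p<6/(3+\al)$, combining H\"older with the weighted $L^2$ bound from Step~1; this is slightly more direct than your interpolation into the Morrey hypothesis.
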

\begin{proof}
The proof is divided into two steps.\

\medskip
\noindent\emph{Step 1.}
First we give an integral estimate of $F_A$ by using the broken Coulomb gauge.

Given any $R\le R_0$, $(A,u)$ restricts to a smooth YMH field on $B^*_{R}$. Rescaling by $R$, we may assume that $A$ satisfies the equation
\begin{equation}\label{eq01}
  D^*_A F_A=-R^2u^*(\n_A u)
\end{equation}
on the unit disc $B_1^*$, and \eqref{es-YMH*} holds for all $x\in B^*_{1}$. By Lemma \ref{Bro-gauge}, there exists a broken Coulomb gauge such that $A$ satisfies properties $(1)$-$(7)$.  By definition, in each annulus $\Om^i$, we have
$\tau^{-i-1}<r\leq \tau^{-i}.$
It follows from property $(6)$ that
\[(6^\prime) \quad \int_{\Om_i}r^\al|A^i|^2dx\leq \frac{\tau^\al}{(2-C\ep'_0)\tau^{2(i+1)}}\int_{\Om_i}r^{\al}|F_A|^2dx.\]
for any $\al\in (1,3)$.

On the other hand, since $F_{A^i}=dA^i+\frac{1}{2}[A^i,A^i]$, integration by parts gives
\begin{align*}
\int_{\Om_i}r^\al|F_{A^i}|^2dx=&\int_{\Om_i}\<dA^i+\frac{1}{2}[A^i,A^i],r^\al F_{A^i}\>dx\\
=&\int_{\Om_i}\<D_{A^i}A^i-\frac{1}{2}[A^i,A^i],r^\al F_{A^i}\>dx\\
=&\int_{\Om_i}\<A^i,D^*_{A^i}(r^\al F_{A^i})\>dx-\frac{1}{2}\int_{\Om_i}\<[A^i,A^i],r^\al F_{A^i}\>dx\\
&+(\int_{S_{i}}-\int_{S_{i+1}})r^\al(A^i_\th\wedge *F_{A^i})\\
\end{align*}
Inserting in equation \eqref{eq01}, we get
\begin{equation}\label{eq02}
  \begin{aligned}
\int_{\Om_i}r^\al|F_{A^i}|^2dx=&\int_{\Om_i}r^\al\<A^i,R^2u^*(\n_A u)\>dx-\al\int_{\Om_i}r^{\al-1}\<A^i,*(dr\wedge*F_{A^i})\>dx\\
&-\frac{1}{2}\int_{\Om_i}r^\al \<[A^i,A^i],F_{A^i}\>dx+(\int_{S_{i}}-\int_{S_{i+1}})r^\al(A^i_\th\wedge *F_{A^i})\\
=:&I+II+III+(\int_{S_{i}}-\int_{S_{i+1}})r^\al(A^i_\th\wedge *F_{A^i}).
\end{aligned}
\end{equation}

Next we estimate the above three terms $I$-$III$ respectively.  Using H\"older inequality and $(6^\prime)$ above, we have
\begin{align*}
|I|= &|\int_{\Om_i}r^\al\<A^i,R^2u^*(\n_A u)\>dx|\\
\leq &\ep\int_{\Om_i}r^\al|A^i|^2dx+C\ep^{-1}R^4\int_{\Om_i}r^\al|\n_A u|^2dx\\
\leq &\ep\frac{\tau^\al}{(2-C\ep'_0)\tau^{2(i+1)}}\int_{\Om_i}r^\al|F_{A^i}|^2dx+C\ep^{-1}R^4\int_{\Om_i}r^\al|\n_A u|^2dx,
\end{align*}
For the second term, we have
\begin{align*}
|II|\leq &\al\int_{\Om_i}|x|^{\al}|A^i||F_A|r^{-1}dx\\
\leq &\al\tau^{i+1}\(\int_{\Om_i}r^{\al}|A^i|^2dx\)^{1/2}\(\int_{\Om_i}r^{\al}|F_A|^2dx\)^{1/2}\\
\leq &\al(\frac{\tau^{\al}}{2-C\ep'_0})^\frac{1}{2}\int_{\Om_i}r^{\al}|F_A|^2dx,
\end{align*}
For the third term, using estimates $(5)$ in Lemma \ref{Bro-gauge}, we get
\begin{align*}
|III|\leq & \frac{1}{2}\norm{A^i}_{L^\infty}\(\int_{\Om_i}r^{\al}|A^i|^2dx\)^{1/2}\(\int_{\Om_i}r^{\al}|F_A|^2dx\)^{1/2}\\
\leq & C\ep'_0 \tau^{i+1}\(\int_{\Om_i}r^{\al}|A^i|^2dx\)^{1/2}\(\int_{\Om_i}r^{\al}|F_A|^2dx\)^{1/2}\\
\leq & C\ep'_0\(\frac{\tau^{\al}}{2-C\ep'_0}\)^\frac{1}{2}\int_{\Om_i}r^{\al}|F_A|^2dx.
\end{align*}
Since $1<\tau<\frac{5}{4}$, we can take $\al$ closed to $1$, $\ep_0$ and $\ep$ small enough such that
\[1-(C\ep'_0+\al)(\frac{\tau^{\al}}{2-C\ep'_0})^\frac{1}{2}-\frac{\ep\tau^\al}{(2-C\ep'_0)\tau^{2(i+1)}}\geq\frac{1}{8}.\]
Putting the above estimates into \eqref{eq02}, we arrive at
\begin{equation}\label{eq03}
\frac{1}{8}\int_{\Om_i}r^{\al}|F_A|^2dx\leq Cr^4\int_{\Om_i}r^\al|\n_A u|^2dx+(\int_{S_{i}}-\int_{S_{i+1}})r^\al(A^i_\th\wedge *F_{A^i}).
\end{equation}

Summing up \eqref{eq03} for all $i\ge 0$ and noting that
\[\limsup_{i\to \infty}\int_{S_i}r^\al|A^i_\th\wedge *F_{A^i}|dV_{S_i}\leq C(\ep'_0)^2\lim_{i\to \infty}\tau^{-i\al+1}=0,\]
we obtain
\begin{equation}\label{eq05}
  \int_{B_{1}}r^\al|F_A|^2dx\leq CR^4\int_{B_{1}}r^\al|\n_A u|^2dx+\frac{8}{(2-C\ep'_0)}\int_{\p B_1}|F_A|^2dV_{\p B_1}.
\end{equation}

Now rescaling back to $B_R$, inequality \eqref{eq05} becomes
\begin{equation}\label{eq04}
  \int_{B_{R}}r^\al|F_A|^2dx\leq CR^2\int_{B_{R}}r^\al|\n_A u|^2dx+CR\int_{\p B_R}r^\al|F_A|^2dV_{\p B_R}.
\end{equation}
If we set $I(R)=\int_{B_{R}}r^\al|F_A|^2dx$, inequality \eqref{eq04} and \eqref{es-YMH*} together yields
\[I(R)\leq C\ep'_0R^{\al+3}+bRI^\prime(R)\]
where $b=\frac{8}{(2-C\ep'_0)}$ is independent of $\al$. Solving this differential inequality, we find
\begin{equation}\label{eq:integral}
\begin{aligned}
I(R)\leq &C\ep'_0 (R^{3+\al-1/b}_0+\ep'_0R^{\al-1-1/b}_0)r^{1/b}\\
\leq &C\ep'_0(R^4_0+\ep'_0)R^{\al-1-1/b}_0R^{1/b}.
\end{aligned}
\end{equation}

\medskip
\noindent\emph{Step 2.}
Next we apply a similar argument as in Corollary~\ref{ep-es1} to get point-wise estimate of $F_A$.

For any $x\in B^*_{R_0/2}$, we have $B_{r}(x)\subset B_{2r}(0)$ with $|x|=r$. Combining item (2) in Lemma \ref{bochner} with \eqref{es-YMH*}, we have
\[\De |F_A|\geq -\frac{C\ep^\prime_0}{r^2}|F_A|-\frac{C\ep^\prime_0}{r^2}\]
on $B_{r}(x)$. Therefore,  for any fixed $0<p<\frac{6}{3+\al}$, the Nash-Moser estimate implies
\begin{align*}
r^2|F_A|(x)\leq &C_pr^{2-\frac{3}{p}}\(\int_{B_{2r}(x)}|F_A|^pdy\)^{1/p}+Cr^2\ep'_0\\
\leq &C_pr^{2-\frac{3}{p}}\(\int_{B_{2r}(x)}|y|^\al|F_A|^2dy\)^{1/2}\(\int_{B_{2r}(x)}|y|^{-\frac{\al p}{2-p}}dy\)^{\frac{2-p}{p}}+Cr^2\ep^2_0.\\
\leq & C_pI^{1/2}(r)r^{\frac{1-\al}{2}}+Cr^2\ep'_0
\end{align*}
Applying \eqref{eq:integral}, we finally obtain
\begin{align*}
r^2|F_A|(x)\leq C\ep'_0\(\frac{r}{R_0}\)^{2\kappa_0}+Cr^2\ep'_0,
\end{align*}
where $2\kappa_0=\frac{1+1/b-\al}{2}=\frac{1-\al}{2}+\frac{2-C\ep^2_0}{8}$.
\end{proof}

\subsection{Proof of Theorem~\ref{decay-YMH}: energy bound I}
Now we are ready to prove Theorem~\ref{decay-YMH} for YMH fields $(A,u)$ satisfying energy bound I, which we restate as follows.

\begin{thm}\label{t:decay-I}
Let $(A, u)$ be a  smooth YMH field on $B^*_{R_0}$, which satisfies energy bound I:
\begin{equation}\label{inq3}
\sup_{B_{\rho(y)\subset B_{R_0}}}\(\frac{1}{\rho}\int_{B_{\rho}(y)}|\n_A u|^2+|F_A|dx\)\leq \ep^2_0,
\end{equation}	
where $\ep_0$ is given in Lemma \ref{ep-reg}. Then there exists $r_0\leq R_0/2$ such that
\[r^2\sqrt{|F_A|^2(x)+1}\leq C(\ep^2_0+r^2_0)(\frac{r}{r_0})^{1+2\al},\]
and
\[r\sqrt{|\n_A u|^2 +1}\leq C\(\ep_0+r_0\)\(\frac{r}{r_0}\)^{\frac{8\al+1}{5}},\]
for any $r=|x|\leq r_0$, where $\al=\sqrt{\frac{1}{4}-C(\ep^2_0+r^{2/3}_0)}$.
\end{thm}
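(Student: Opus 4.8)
The plan is to convert the differential inequalities from Lemma~\ref{diff-ineq-YMH} into ODE comparison problems in the cylindrical variable $t=-\log r$, estimate the curvature first using the improved regularity of Lemma~\ref{decay-F1}, and then feed that curvature bound back into the inequality for $\n_A u$. First I would note that energy bound I together with the $\ep$-regularity theorem (Lemma~\ref{ep-reg}) gives the pointwise bound $r^2(|\n_A u|^2+|F_A|)\le C\ep_0^2$, so after shrinking $R_0$ if necessary the hypothesis \eqref{es-YMH*} of Lemma~\ref{decay-F1} holds; this yields $r^2|F_A|\le C\ep_0^2(r/R_0)^{2\ka_0}$ with $\ka_0\ge \tfrac{1}{16}$. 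Passing to cylindrical coordinates, the function $f=(|F_A|^2+1)^{1/4}$ obeys \eqref{eq2}, and one checks that the spherical average $\bar f(t):=e^{-t/2}\langle f\rangle_{\U^2}(t)$ (or the full function after separating the sphere Laplacian) satisfies the linear inequality
\begin{equation*}
\p_t^2\bar f+\De_{\U^2}\bar f-\al^2\bar f\ge 0,\qquad \al=\sqrt{\tfrac14-C(\ep_0^2+r_0^{2/3})},
\end{equation*}
where the coefficient $\al^2$ comes from absorbing the zeroth-order term $Ce^{-2t}(|\n_A u|^2+f^2)$ using the $\ep$-regularity bound and the improved curvature decay from Lemma~\ref{decay-F1}.

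Next I would run the ODE comparison argument. The crux is that a subsolution of $\p_t^2 w-\al^2 w\ge 0$ on a half-line that is bounded grows no faster than $e^{-\al t}$ away from the endpoint; integrating the sphere-Laplacian against a first eigenfunction or simply projecting onto the zero mode on $\U^2$ reduces \eqref{eq2} to such a scalar ODE. Because $\al<\tfrac12$, the naive bound $e^{-\al t}$ is \emph{weaker} than the $\ep$-regularity decay $f\sim e^{t/2}$ would suggest is possible, which is exactly why the improved integral/pointwise estimate from Lemma~\ref{decay-F1} (giving effective decay rate $e^{-\ka_0 t}$ with a definite $\ka_0$) is indispensable: it supplies a genuine initial decay that the comparison principle can then upgrade to the sharp exponent $1+2\al$ in \eqref{main-theorem-curvature}. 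Choosing $r_0$ appropriately and translating back via $r=e^{-t}$ gives
\begin{equation*}
r^2\sqrt{|F_A|^2+1}\le C(\ep_0^2+r_0^2)\Bigl(\frac{r}{r_0}\Bigr)^{1+2\al}.
\end{equation*}

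For the Higgs field I would use the curvature decay just obtained as an input. The function $g=(|\n_A u|^2+1)^{\frac{1}{2}(\frac12+\de)}$ satisfies \eqref{eq4}, and after setting $\bar g(t)=e^{-t/2}\langle g\rangle(t)$ the zeroth-order term $C_\de e^{-2t}(|F_A|^2+|\n_A u|^2+1)g$ splits: the $|\n_A u|^2$ and constant pieces are absorbed into the same $\al^2$ coefficient as before, while the curvature piece, now controlled by $|F_A|^2\le Ce^{-4\al(t-t_0)}e^{2t}/r_0^{\cdots}$ from \eqref{main-theorem-curvature}, produces an inhomogeneous term of the form $-Ce^{-4\al(t-t_0)}\bar g$. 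Thus $\bar g$ satisfies
\begin{equation*}
\p_t^2\bar g+\De_{\U^2}\bar g-\al^2\bar g\ge -Ce^{-4\al(t-t_0)}\bar g,\qquad t_0=-\log r_0,
\end{equation*}
and the comparison principle for this perturbed ODE yields decay of order $e^{-\beta t}$ with $\beta$ determined by balancing $\al$ against the forcing rate $4\al$; optimizing over the admissible $\de\in(0,\tfrac14)$ gives the exponent $\frac{8\al+1}{5}$ in \eqref{main-theorem-section}. The main obstacle I anticipate is precisely this last balancing: the forcing term depends on $\bar g$ itself (it is not a pure source), so one must argue that for $t$ large the perturbation is a small multiple of $\bar g$ and set up an iteration or a continuity/bootstrap argument on the decay rate rather than a one-shot comparison; getting the clean exponent $\tfrac{8\al+1}{5}$ requires tracking the interplay between the rate $4\al$ of the curvature forcing and the homogeneous rate $\al$ with care, and choosing $\de$ and $r_0$ so that all error terms close up.
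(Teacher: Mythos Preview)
Your outline is essentially the paper's proof: $\ep$-regularity plus Lemma~\ref{decay-F1} for the seed decay, the substitution $\bar f=e^{-t/2}f$ turning \eqref{eq2} into a linear subsolution inequality on the cylinder, a two-endpoint comparison with $T\to\infty$ (where the key point $\tfrac12-\ka_0<\al$ kills the far boundary term), and then feeding the curvature decay into \eqref{eq4} for $\bar g$.

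Two small clarifications relative to the paper. First, the paper works with the full functions $\bar f,\bar g$ on $[t_0,\infty)\times\U^2$ and applies the maximum principle directly to the elliptic operator $\p_t^2+\De_{\U^2}-\al^2$; there is no need to project onto the zero spherical mode or average over $\U^2$. Second, the obstacle you flag at the end---that the forcing $-Ce^{-4\al(t-t_0)}\bar g$ is multiplicative in $\bar g$---is handled in the paper without any iteration: one simply bounds $\bar g$ on the right-hand side by its crude a~priori size $\tilde g\le C\ep_0e^{\de t}+Ce^{-t/2}$ coming from $\ep$-regularity, which converts the term into a pure source $Q$ with explicit decay. A single comparison function $\tilde h$ then closes the argument in one shot, and the specific choice $\de=\tfrac18$ (not an optimization over all $\de$) produces the exponent $\tfrac{8\al+1}{5}$ via $2\frac{\al+\de}{1+2\de}\big|_{\de=1/8}$.
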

\begin{proof}
	
The proof is divided into three steps.\

\medskip
\noindent\emph{Step 1: Improved regularity of $F_A$.}\	
	
From \eqref{inq3}, we see that for any $x\in B^*_{2R_0/3}$ and $r=|x|$,
\[\sup_{B_{\rho(y)\subset B_{r/2}}(x)}\(\frac{1}{\rho}\int_{B_{\rho}(y)}|\n_A u|^2+|F_A|dx\)\leq \ep^2_0.\]
Then the standard $\ep$-regularity theorem (Theorem~\ref{ep-reg}) insures the bound \eqref{es-YMH*}. Hence Lemma \ref{decay-F1}  applies and \eqref{eq:lem4.2} holds on $B^*_{R_0/2}$.

In the following, we work in cylindrical coordinates for convenience. Let $t'_0=-\log(R_0/2)$ and $(t=-\log r, \th)\in [t_0', +\infty)\times \mathbb{S}^2$ . Then \eqref{es-YMH*} and \eqref{eq:lem4.2} becomes
\begin{equation}\label{es-cyl1}
e^{-2t}(|\n_A u|^2(t,\th)+|F_A|(t,\th))\leq C\ep^2_0,
\end{equation}
and
\begin{equation}\label{es-cyl2}
e^{-2t}|F_A|\leq C\ep^2_0 e^{-2\ka_0(t-t'_0)},	
\end{equation}
where $\ka_0\geq\frac{1}{16}$.

\medskip
\noindent\emph{Step 2: Decay estimate for $F_A$.}\	

Now inserting \eqref{es-cyl1} into the differential inequality \eqref{eq3} in Lemma \ref{diff-ineq-YMH}, we find that the function $f=(|F_A|^2+1)^{1/4}$ satisfies
\[\p^2_tf-\p_t f+\De_{\mathbb{S}^2}f+C(\ep^2_0+e^{-2t})f\geq 0\]
on $(t'_0,+\infty)\times \mathbb{S}^2$. Hence the product $\bar{f}=e^{-\frac{1}{2}t}f$ satisfies
\[\p^2_t\bar{f}+\De_{\mathbb{S}^2}\bar{f}-(\frac{1}{4}-C(\ep^2_0+e^{-2t}))\bar{f}\geq 0.\]
Then we may choose a large $t_0>t'_0$, such that
\begin{equation}\label{eq:alpha}
  \al^2:=\frac{1}{4}-C(\ep^2_0+e^{-2t_0})\ge (\frac{1}{2}-\ka_0)^2.
\end{equation}
Then for any $t>t_0$, we have
\begin{equation}\label{eq:f-bar}
\p^2_t\bar{f}+\De_{\mathbb{S}^2}\bar{f}-\al^2\bar{f}\geq 0,
\end{equation}
and by \eqref{es-cyl2},
\begin{equation}\label{eq:f-bar-1}
 \bar{f}(t,\th)\leq C\ep^2_0e^{\ka_0 t_0'}e^{(\frac{1}{2}-\ka_0) t}+Ce^{-\frac{1}{2}t}.
\end{equation}

Next for any $m\in \mathbb{Z}^+$ and $T>mt_0$, we consider the comparison function
\[h(t)=C(\ep^2_0e^{m\ka_0 t_0}e^{(\frac{1}{2}-\ka_0) T}+e^{-\frac{1}{2}T})e^{-\al(T-t)}+C(\ep^2_{0}e^{\frac{m}{2} t_0}+e^{-\frac{m}{2}t_0})e^{-\al(t-mt_0)}.\]
which satisfies
\[ \p^2_th+\De_{\mathbb{S}^2}h-\al^2h= 0\]
on $[mt_0, T]\times \mathbb{S}^2$. Inequality \eqref{eq:f-bar-1} guarantees that $h$ also satisfies the boundary condition
\[\bar{f}(mt_0,\th)\leq h(mt_0),\quad \bar{f}(T,\th)\leq f(T).\]
Therefore, the comparison principle gives
\[\bar{f}(t,\th)\leq h(t).\]
on $[mt_0,T]\times \mathbb{S}^2$.

In particular, by \eqref{eq:alpha}, we have $\frac{1}{2}-\ka_0-\al<0$. Therefore, we may let $T\to +\infty$, yielding
\begin{equation}\label{eq:FA}
  e^{-2t}\sqrt{|F_A|^2+1}=e^{-t}\bar{f}^2\leq C(\ep^2_{0}+e^{-2mt_0})e^{-(2\al+1)(t-mt_0)}.
\end{equation}
By choosing $r_0=e^{-3t_0}$, estimate \eqref{eq:FA} with $m=3$ gives the desired estimate of $F_A$.

\medskip
\noindent\emph{Step 3: Decay estimate for $\n_A u$.}\	

By \eqref{eq:FA} with $m=1$, we have
\begin{align*}
e^{-2t}|F_A|^2(t,\th)\leq C(\ep^4_0e^{2t_0}+e^{-2t_0})e^{- 4\al (t-t_0)},
\end{align*}
for all $t\ge t_0$. Inserting into inequality \eqref{eq4} in Lemma \ref{diff-ineq-YMH}, we find that on $[t_0,\infty)\times \mathbb{S}^2$ the function $\tilde{g}=e^{-\frac{1}{2}t}g$ satisfies
\begin{equation}\label{eq-cyl3}
\p^2_t \tilde{g}+\De_{\mathbb{S}^2}\tilde{g}-\al^2\tilde{g}\geq -C(\ep^4_0e^{2t_0}+e^{-2t_0}) e^{-4\al (t-t_0)}\tilde{g},	
\end{equation}
and
\begin{align*}
	\tilde{g}(t,\th)=e^{-\frac{1}{2}t}(|\n_A u|^2+1)^{\frac{1}{2}(\frac{1}{2}+\de)}\leq C\ep_0 e^{\de t}+Ce^{-\frac{1}{2}t}.
\end{align*}
In particular, we may assume that $\al> \frac{1}{4}$ by choosing $\ep_0$ and $r_0$ small enough. Then for $t\geq 3t_0$, the term on the right hand side of the inequality \eqref{eq-cyl3} satisfies
\[Q:=-C(\ep^4_0e^{2t_0}+e^{-2t_0}) e^{-4\al t}\tilde{g}\geq -C(\ep_0e^{\de t}+e^{-\frac{1}{2}t}).\]

Now for $T\geq 3t_0$, we consider the comparison function
\[\tilde{h}(t)=4C(\ep_0e^{\de T}+e^{-\frac{1}{2}T})e^{-\al(T-3t_0)}+4C(\ep_{0}e^{3\de t_0}+e^{-3/2t_0})e^{-\al(t-3t_0)}-2Q.\]
 for $T>3t_0$. A simple computation shows that $\tilde{\ga}=\tilde{g}-\tilde{h}$ solves
\begin{equation*}
\begin{cases}
\p^2_t\tilde{\ga}+\De_{g_{\mathbb{S}^2}}\tilde{\ga}-\al^2\tilde{\ga}\geq 0, \\[1ex]
\tilde{\ga}(t_0,\th)\leq 0, \\[1ex]
\tilde{\ga}(T,\th)\leq 0
\end{cases}	
\end{equation*}
on $[3t_0,T]\times \mathbb{S}^2$. Therefore, the comparison principle gives
\[\tilde{g}\leq \tilde{h}\]
on $[3t_0,T]\times \mathbb{S}^2$. Then by taking $\de=\frac{1}{8}<\al$ and letting $T\to +\infty$, we get
\[e^{-(1/2+\de)t}g(t,\th)\leq C(\ep_0 +e^{-(1/2+\de)3t_0})e^{-(\al+\de)(t-3t_{0})}.\]
Or equivalently,
\[e^{-t}\sqrt{|\n_A u|^2+1}\leq C(\ep_0+e^{-3t_0})e^{-2\frac{\al+\de}{1+2\de}(t-3t_0)}.\]
This gives the desired estimate of $\n_A u$ and the proof is completed.
\end{proof}

\medskip
\section{Decay estimate of 3d YMH fields: energy bound II}\label{s:ebound-II}

In this section, we prove the main Theorem~\ref{decay-YMH} for 3d YMH fields satisfying energy bound II \eqref{Mor-c1}.

\subsection{Monotonicity formula}

Let $(A,u)$ be a YMH field on $B_1\subset \Real^3$, which is equipped with the Euclidean metric $g_{euc}$. We call the following tensor as the twisted stress energy tensor of $u$
\[S_A(u)=\<\n_A u\otimes \n_A u\>-\frac{1}{2}|\n_A u|^2g_{euc}.\]
In fact, if $A$ is a flat connection and the Higgs potential vanishes, $S_A(u)$ is just the usual stress energy tensor for harmonic maps.

The divergence of $S:=S_A(u)$ satisfies
\begin{align*}
(\mbox{div}S)_j=&\<\n_{A_i}\n_{A_i} u, \n_{A_j}u\>+\<\n_{A_i} u, \n_{A_i}\n_{A_j} u\>-\<\n_{A_i}u, \n_{A_j}\n_{A_i} u\>\\
=&-\<\n_A^*\n_A u,\n_j u\>+\<(F_A)_{ij}\cdot u, \n_{A_i} u\>\\
=&\<\n V(u),\n_{A_j} u\>+\<(F_A)_{ij}, u^*(\n_{A_i} u)\>,
\end{align*}
where we have used the formula
\[\n_{A_i}\n_{A_j} u-\n_{A_j}\n_{A_i} u=(F_A)_{ij}\cdot u.\]
Thus, for any $0<R\leq 1$ and vector field $X\in C^\infty(B_R, \Real^3)$, we have
\begin{equation}\label{sta-YMH}
\begin{aligned}
&\int_{B_R}\<S, \n X\>dx-\int_{\p B_R}S(X,\frac{\p}{\p r})dV_{\p B_R}\\
=&-\int_{B_R}\<\n V(u), \n_A u(X)\>+\<F_A(X), u^*(\n_A u)\>dx.
\end{aligned}
\end{equation}
Here $dV_{\p B_R}$ denotes the volume form of $\p B_R$.

The following lemma says that, for a smooth YMH field on the punctured ball $B^*_1$ with bounded YMH energy, the formula \eqref{sta-YMH} still holds true for a vector field $X$ with $X(x)=O(|x|)$.
\begin{lem}\label{pre-lem}
Let $(A,u)$ be a smooth YMH field on $B^*_1$ with bounded YMH energy. Then for any vector field $X\in C^\infty(B_1, \Real^3)$ with $X(x)=O(|x|)$ when $x\to 0$,
then the formula \eqref{sta-YMH} still holds true in the whole $B_R$ for any $0<R\leq 1$.
\end{lem}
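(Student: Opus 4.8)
The plan is to exhaust $B_R$ by the annuli $B_R\setminus\overline{B_\rho}$, on which $(A,u)$ is smooth, repeat on each annulus the divergence-theorem computation that produced \eqref{sta-YMH}, and then let $\rho\to 0$. Since the outward normal of the annulus along the inner sphere $\p B_\rho$ is $-\p/\p r$, this computation gives
\begin{align*}
\int_{B_R\setminus B_\rho}\<S,\n X\>\,dx-\int_{\p B_R}S\big(X,\tfrac{\p}{\p r}\big)\,dV_{\p B_R}
=&-\int_{B_R\setminus B_\rho}\big(\<\n V(u),\n_A u(X)\>+\<F_A(X),u^*\n_A u\>\big)\,dx\\
&-\int_{\p B_\rho}S\big(X,\tfrac{\p}{\p r}\big)\,dV_{\p B_\rho},
\end{align*}
where $S=S_A(u)$. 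Thus it suffices to show that, along a suitable sequence $\rho_k\to 0$, the two volume integrals converge to their counterparts over the full ball $B_R$ while the inner boundary term tends to $0$.

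The convergence of the volume integrals follows from the finiteness of the YMH energy. Indeed, $|S|\le C|\n_A u|^2$ and $\n X$ is bounded on $B_R$ (as $X\in C^\infty(B_1,\Real^3)$), so $\<S,\n X\>$ is dominated by $C|\n_A u|^2\in L^1(B_R)$ and dominated convergence applies. For the divergence term, the bound $|X|\le C|x|$ together with the compactness of $N$ gives $|\n V(u)|\le C$, so $|\<\n V(u),\n_A u(X)\>|\le C|\n_A u|$, which is integrable; for the curvature term, Young's inequality and $|x|\le 1$ yield
\[|\<F_A(X),u^*\n_A u\>|\le C|x|\,|F_A|\,|\n_A u|\le C\big(|F_A|^2+|\n_A u|^2\big)\in L^1(B_R).\]
Hence both volume integrals converge to the corresponding integrals over $B_R$ as $\rho\to 0$.

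The main obstacle is the inner boundary term, which is exactly where the possible singularity at the origin must be controlled. Using $|S|\le C|\n_A u|^2$ and $|X|\le C\rho$ on $\p B_\rho$, I estimate
\[\Big|\int_{\p B_\rho}S\big(X,\tfrac{\p}{\p r}\big)\,dV_{\p B_\rho}\Big|\le C\rho\int_{\p B_\rho}|\n_A u|^2\,dV_{\p B_\rho}=:C\rho\, m(\rho).\]
By the coarea formula, $\int_0^R m(\rho)\,d\rho=\int_{B_R}|\n_A u|^2\,dx<\infty$, so $m\in L^1(0,R)$. If $\rho\, m(\rho)$ were bounded below by a positive constant for all small $\rho$, then $m(\rho)\ge c/\rho$ would fail to be integrable near $0$; therefore $\liminf_{\rho\to 0}\rho\, m(\rho)=0$, and one may select a sequence $\rho_k\to 0$ with $\rho_k m(\rho_k)\to 0$. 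Passing to the limit along $\rho_k$ annihilates the inner boundary term while the volume integrals converge by the previous paragraph, which yields \eqref{sta-YMH} on $B_R$ and completes the argument.
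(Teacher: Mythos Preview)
Your argument is correct. The paper proves the same lemma by a closely related but slightly different device: instead of excising a small ball and tracking the inner boundary term, it multiplies $X$ by a cutoff $\eta_\ep$ that vanishes on $B_\ep$ and equals $1$ outside $B_{2\ep}$, applies \eqref{sta-YMH} to $Y=\eta_\ep X$ (which is now legitimate since $Y$ vanishes near the origin), and then lets $\ep\to 0$. The only extra term produced is $\int_{B_{2\ep}\setminus B_\ep}\<S,\n\eta_\ep\otimes X\>\,dx$, and because $|\n\eta_\ep|\le C/\ep$ while $|X|\le C\ep$ on that annulus, this is bounded by $C\int_{B_{2\ep}\setminus B_\ep}|\n_A u|^2\,dx\to 0$.

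The practical difference is that the cutoff route gives convergence along \emph{every} sequence $\ep\to 0$, whereas your annulus argument needs the $\liminf$ observation to select a good sequence $\rho_k$ along which $\rho_k m(\rho_k)\to 0$. Both are standard ways to justify an integration-by-parts identity across an isolated singularity under an $L^1$ energy bound; your version is perhaps more transparent about where exactly finite energy is used, while the paper's avoids the (harmless) subsequence step.
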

\begin{proof}
For any $\ep>0$, let $\eta_\ep$ be a cut-off function such that $\eta_\ep(x)=0 $ in $B_\ep(0)$, $\eta_\ep(x)=1$ in $B_1\setminus B_{2\ep}$ and $|\n \eta_\ep(x)|\leq \frac{C}{\ep}$. Then for any $X\in C^\infty(B_1, \Real^3)$, setting $Y=\eta_\ep(x) X(x)$ and applying \eqref{pre-lem}, we get
\begin{equation}\label{eq:lem5.1}
  \begin{aligned}
&\int_{B_R}\<S, \n Y\>dx-\int_{\p B_R}S(Y,\frac{\p}{\p r})dV_{\p B_R}\\
=&-\int_{B_R}\<\n V(u), \n_A u(Y)\>+\<F_A(Y), u^*(\n_A u)\>dx.
\end{aligned}
\end{equation}

By assumption $X(x)=O(|x|)$, it follows
\begin{align*}
& |\int_{B_R}\<S, \n Y\>dx-\int_{B_R}\eta_\ep\<S, \n X\>dx|= |\int_{B_{2\ep}\setminus B_\ep}\<S, \n \eta_\ep\otimes X\>dx|\\
 \leq& C\int_{B_{2\ep}\setminus B_\ep}|S|dx \leq C\int_{B_{2\ep}\setminus B_\ep}|\n_A u|^2dx.
\end{align*}
Since $(A,u)$ has bounded YMH energy on the whole ball $B_1$, the desired results follows by letting $\ep\to 0$ in \eqref{eq:lem5.1}.
\end{proof}

As an application, we get a monotonicity formula for 3d YMH fields.

\begin{lem}\label{mono-formu}
Let $(A,u)$ be a smooth YMH field on $B^*_1\subset \Real^3$ with bounded YMH energy. Then for any $B_R(y)\subset B_1$ and $\de\in (0,1)$, we have
\begin{equation}\label{mono}
\frac{1}{\si}\int_{B_\si(y)}|\n_A u|^2dx\leq e^{\frac{1}{1-\de}R^{1-\de}}\frac{1}{R}\int_{B_R(y)}|\n_A u|^2dx+2\de^{-1}e^{\frac{1}{1-\de}}\Lambda(R)R^{\de},
\end{equation}
where $\si<R$ and $\Lambda(R)=\int_{B_R(y)}(|F_A|^2+|\n V(u)|^2)dx$.
\end{lem}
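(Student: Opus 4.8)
The plan is to adapt the classical stress-energy monotonicity argument for (twisted) harmonic maps, treating the curvature $F_A$ and the potential gradient $\n V(u)$ as inhomogeneous error terms, and then to close the estimate by a Gronwall argument. I would fix the center $y$ and apply the first variation identity \eqref{sta-YMH} over the ball $B_\si(y)$ with the radial vector field $X=x-y$; this is legitimate even when the singularity lies inside $B_\si(y)$ thanks to Lemma~\ref{pre-lem}, since the relevant field is controlled near the origin. Because $\n X=\mathrm{Id}$, in dimension three we have $\langle S,\n X\rangle=\tr S=-\tfrac12|\n_A u|^2$, while on $\p B_\si(y)$ one has $X=\si\,\frac{\p}{\p\rho}$ (with $\rho=|x-y|$), so that $S(X,\frac{\p}{\p\rho})=\si\big(|\n_{A,\rho}u|^2-\tfrac12|\n_A u|^2\big)$, where $\n_{A,\rho}u$ denotes the radial covariant derivative.

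Writing $E(\si)=\int_{B_\si(y)}|\n_A u|^2\,dx$ and $\Phi(\si)=E(\si)/\si$, the coarea formula gives $E'(\si)=\int_{\p B_\si(y)}|\n_A u|^2\,dV$. Substituting the two computations above into \eqref{sta-YMH} and discarding the nonnegative radial term $\si\int_{\p B_\si(y)}|\n_{A,\rho}u|^2$, I obtain
\[\frac{\si}{2}E'(\si)-\frac12 E(\si)\ \geq\ -|R_{\mathrm{err}}(\si)|,\qquad R_{\mathrm{err}}(\si):=\int_{B_\si(y)}\big(\langle \n V(u),\n_A u(X)\rangle+\langle F_A(X),u^*(\n_A u)\rangle\big)\,dx,\]
which in terms of the scale-invariant quantity $\Phi$ reads $\Phi'(\si)\geq-2\si^{-2}|R_{\mathrm{err}}(\si)|$.

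The key step is to bound $R_{\mathrm{err}}$. On $B_\si(y)$ we have $|X|=\rho\leq\si$, hence $|\n_A u(X)|\leq\si|\n_A u|$ and $|F_A(X)|\leq\si|F_A|$; combined with $|u^*(\n_A u)|\leq|\n_A u|$ (from the compactness of $N$) this yields $|R_{\mathrm{err}}(\si)|\leq\si\int_{B_\si(y)}(|\n V(u)|+|F_A|)|\n_A u|\,dx$. I would then apply Young's inequality with the scale-dependent weight $\lambda(\si)=\si^{-\de}$, splitting the integrand as $\tfrac{\lambda}{2}|\n_A u|^2+\tfrac1\lambda(|\n V(u)|^2+|F_A|^2)$. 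Since $E(\si)=\si\Phi(\si)$ and $\Lambda$ is monotone in the radius, this produces the linear differential inequality
\[\Phi'(\si)\ \geq\ -\si^{-\de}\Phi(\si)-2\si^{\de-1}\Lambda(R).\]

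Finally I would integrate this inequality using the integrating factor $I(\si)=\exp\big(\tfrac{1}{1-\de}\si^{1-\de}\big)$; here the hypothesis $\de\in(0,1)$ is exactly what makes the weight $\si^{-\de}$ integrable at the origin and keeps $I$ bounded. Multiplying through, integrating from $\si$ to $R$, and using $I(s)\leq I(R)$ together with $\int_\si^R s^{\de-1}\,ds\leq\de^{-1}R^\de$, $I(\si)\geq1$, and $R\leq1$ (so $I(R)\leq e^{1/(1-\de)}$ in the error term), I arrive at the stated estimate \eqref{mono}. I expect the main difficulty to lie in the bookkeeping of constants: the weight $\lambda=\si^{-\de}$ must be calibrated so that the absorbed $|\n_A u|^2$ term reproduces precisely the exponential factor $e^{\frac{1}{1-\de}R^{1-\de}}$ while the remaining term yields exactly $2\de^{-1}e^{\frac{1}{1-\de}}\Lambda(R)R^\de$. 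A secondary technical point, handled by the cut-off at scale $\ep$ underlying Lemma~\ref{pre-lem}, is the justification of the integration by parts when the singularity lies inside the ball.
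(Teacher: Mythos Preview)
Your proposal is correct and follows essentially the same route as the paper's proof: plug the radial vector field $X=x-y$ into the stress--energy identity \eqref{sta-YMH} (justified via Lemma~\ref{pre-lem}), bound the inhomogeneous right-hand side by Young's inequality with the scale-dependent weight $\si^{-\de}$, and integrate the resulting linear differential inequality for $\Phi(\si)=\si^{-1}E(\si)$ with the integrating factor $\exp\big(\tfrac{1}{1-\de}\si^{1-\de}\big)$. The paper phrases the last step in terms of $f(\si)=e^{\frac{1}{1-\de}\si^{1-\de}}\si^{-1}E(\si)$ and retains the nonnegative radial boundary term one step longer before discarding it, but these are purely cosmetic differences.
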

\begin{proof}
Let $X=r\frac{\p}{\p r}$ and hence $\n X=g_{euc}$. Then $X=O(|x|)$ and Lemma \ref{pre-lem} applies, yielding
\begin{equation}\label{eq:lem5.2-1}
  \begin{aligned}
&\int_{B_\si}\<S,g_{euc}\>dx-\int_{\p B_\si}r\<S, \frac{\p}{\p r}\otimes\frac{\p}{\p r}\>dV_{\p B_\si}\\
=&-\int_{B_\si} r\<\n V(u), \n_Au(\frac{\p}{\p r})\>dx -\int_{B_\si}r\<u^*(\n_A u), F_A(\frac{\p}{\p r})\>dx.
\end{aligned}
\end{equation}
By definition of $S$, the left hand side of \eqref{eq:lem5.2-1} can be written as
\begin{equation}\label{eq:lem5.2-2}
  \begin{aligned}
LHS=&-\frac{1}{2}\int_{B_\si}|\n_A u|^2dx+\frac{1}{2}\int_{\p B_\si}r|\n_A u|^2dV_{\p B_\si}\\
&-\int_{\p B_\si}r|\n_A u(\frac{\p}{\p r})|^2dV_{\p B_\si}.
\end{aligned}
\end{equation}
The right hand side of \eqref{eq:lem5.2-1} satisfies
\begin{equation}\label{eq:lem5.2-3}
  \begin{aligned}
|RHS| \leq & \si\int_{B_\si}(|F_A|+|\n V(u)|)|\n _A u|dx\\
\leq &\si^{1-\de}\int_{B_\si}|\n_A u|^2dx+\si^{1+\de}\int_{B_\si}\(|F_A|^2+|\n V(u)|^2\)dx
\end{aligned}
\end{equation}
for any $\de\in (0,1)$.

Denote $E(\si)=\int_{B_\si}|\n_A u|^2dx$ and $\Lambda(\si)=\int_{B_\si}|F_A|^2+|\n V(u)|^2dx$. Then we get from \eqref{eq:lem5.2-1}, \eqref{eq:lem5.2-2} and \eqref{eq:lem5.2-3} that
\begin{equation}
  \begin{aligned}
   & \si E^\prime(\si)-E(\si)-2\si\int_{\p B_\si}|\n_A u(\frac{\p}{\p r})|^2dV_{\p B_\si}\\
 \ge& -2\si^{1-\de}E(\si)- 2\si^{1+\de}\Ld(\si).
  \end{aligned}
\end{equation}
Then a simple calculation shows that the function $f(\si)=e^{\frac{1}{1-\de}\si^{1-\de}}\frac{1}{\si}E(\si)$ satisfies
\begin{align*}
f^\prime(\si)\geq \frac{2}{\si}\int_{\p B_{\si}}|\n_Au(\frac{\p}{\p r})|^2dV_{\p B_\si}-2e^{\frac{1}{1-\de}}\Lambda(\si) \si^{\de-1}.
\end{align*}
Integrating from $\si$ to $R$, we get
\[f(\si)+2\int_{B_R}r^{-1}|\n_Au(\frac{\p}{\p r})|^2dx\leq f(R)+2\de^{-1}e^{\frac{1}{1-\de}}\Lambda(R)(R^\de-\si^\de),\]
from which \eqref{mono} follows.
\end{proof}

By using the monotonicity formula \eqref{mono}, we obtain an improved $\ep$-regularity theorem for 3d YMH fields with energy bound II.
\begin{thm}\label{ep-3D}
There exists an $\ep_0>0$ such that for any smooth YMH field $(A,u)$ on $B^*_1$, satisfying
\[\frac{1}{R_0}\int_{B_{R_0}}|\n_A u|^2dx+R_0^{\ka}\(\int_{B_{R_0}}|F_A|^2+|\n V(u)|^2dx\)^{\frac{1}{2}}\leq \ep_0^2,\]
where $R_0\leq 1$ and $\ka\in (0,\frac{1}{2})$, for any $x\in B^*_{R_0/2}$  then we have
\begin{equation}\label{impr-es-YMH}
r^2|\n_A u|^2(x)+r^{\frac{3}{2}+\ka}|F_A|(x)\leq C_\ka\ep^2_0
\end{equation}
for a constant $C_\ka$ depending on the geometry of $N$ and $\ka$.
\end{thm}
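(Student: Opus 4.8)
The plan is to deduce the estimate from the $L^p$-version of $\ep$-regularity already proved in Corollary~\ref{ep-es1}, applied with $p=2$. Then $3/p=3/2$, the admissible range $0\le\ka\le 2-3/p$ becomes precisely $\ka\le\frac12$ (matching the hypothesis $\ka\in(0,\frac12)$), and the resulting exponent $3/p+\ka=\frac32+\ka$ on $|F_A|$ is exactly the one appearing in \eqref{impr-es-YMH}. The only obstruction is that Corollary~\ref{ep-es1} demands that the Higgs--Morrey quantity $\sup_{B_\rho(z)\subset B_R}\frac1\rho\int_{B_\rho(z)}|\n_A u|^2$ be small, whereas energy bound~II supplies only the single fixed-scale quantity $\frac1{R_0}\int_{B_{R_0}}|\n_A u|^2$. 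The role of the monotonicity formula of Lemma~\ref{mono-formu} is precisely to promote this one scale into a bound at all smaller scales; the curvature term, being a rescaled $L^2$-bound, is already of the type Corollary~\ref{ep-es1} can absorb and needs only a scaling check.

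Fix $x\in B^*_{R_0/2}$, write $r=|x|$, and aim to apply Corollary~\ref{ep-es1} on the ball $B_{r/2}(x)$, which avoids the origin (each of its points has modulus $\ge r/2$) and satisfies $B_{r/2}(x)\subset B_{R_0}$. First I would establish the Higgs--Morrey bound on this ball: for every $z\in B_{r/2}(x)$ (so that $r/2\le|z|\le 3r/2\le\frac34 R_0$) and every $\rho\le r/2$, apply Lemma~\ref{mono-formu} centered at $z$ with the outer radius $R=R_0/4$, noting $B_{R_0/4}(z)\subset B_{R_0}$ and $\rho\le r/2\le R_0/4=R$. Choosing the parameter $\de\in(2\ka,1)$, a range which is nonempty precisely because $\ka<\frac12$, the leading term is controlled by $\frac1R\int_{B_R(z)}|\n_A u|^2\le\frac4{R_0}\int_{B_{R_0}}|\n_A u|^2\le 4\ep_0^2$, while the error term obeys $\Ld(R)R^\de\le R_0^{-2\ka}\ep_0^4\,(R_0/4)^\de\le 4^{-\de}R_0^{\de-2\ka}\ep_0^4\le\ep_0^4$, where I used $\Ld(R)\le\int_{B_{R_0}}(|F_A|^2+|\n V(u)|^2)\le R_0^{-2\ka}\ep_0^4$ together with $\de-2\ka>0$ and $R_0\le1$. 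Since the prefactors $e^{\frac1{1-\de}R^{1-\de}}$ and $2\de^{-1}e^{\frac1{1-\de}}$ are bounded by a constant $C_\ka$ depending only on $\de$, hence on $\ka$, this yields $\frac1\rho\int_{B_\rho(z)}|\n_A u|^2\le C_\ka\ep_0^2$ for all such $z$ and $\rho$.

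With the Morrey bound secured, it remains to verify the curvature hypothesis, which follows from a scaling check: $(r/2)^\ka\norm{F_A}_{L^2(B_{r/2}(x))}\le (r/2)^\ka\norm{F_A}_{L^2(B_{R_0})}\le (r/(2R_0))^\ka\ep_0^2\le\ep_0^2$, using $r\le R_0/2$ and the hypothesis $R_0^\ka\norm{F_A}_{L^2(B_{R_0})}\le\ep_0^2$. After choosing $\ep_0$ small enough that $C_\ka\ep_0^2$ lies below the smallness threshold of Corollary~\ref{ep-es1}, that corollary with $p=2$ and radius $r/2$ applies on $B_{r/2}(x)$ and gives, at its center $x$, the bound $(r/2)^2|\n_A u|^2(x)+(r/2)^{3/2+\ka}|F_A|(x)\le C_\ka\ep_0^2$. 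Absorbing the fixed powers of $2$ into $C_\ka$ produces exactly \eqref{impr-es-YMH}.

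I expect the genuine subtlety to lie entirely in the monotonicity step: the error term $\Ld(R)R^\de$ inherits the factor $R_0^{-2\ka}$ from the rescaled $L^2$-curvature hypothesis, and it is only the freedom to pick $\de>2\ka$ --- available because $\ka<\frac12$ --- that keeps this term from blowing up as $R_0\to0$. This is also where the restriction $\ka\in(0,\frac12)$ is forced, matching the admissibility range of Corollary~\ref{ep-es1}. The remaining localization away from the origin and the appeal to Corollary~\ref{ep-es1} are routine scaling arguments once the uniform Higgs--Morrey bound is in place.
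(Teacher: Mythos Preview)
Your proposal is correct and follows essentially the same route as the paper: use the monotonicity formula (Lemma~\ref{mono-formu}) to promote the single-scale bound $\frac{1}{R_0}\int_{B_{R_0}}|\n_A u|^2$ to a uniform Morrey bound at all smaller scales, then feed this together with the $L^2$ curvature bound into Corollary~\ref{ep-es1} with $p=2$. The only cosmetic differences are that the paper takes $\de=2\ka$ exactly (which suffices since the factors $R_0^{-2\ka}$ and $R^{2\ka}$ cancel) and works on $B_r(x)$ with outer radius $R_0/2$, whereas you take $\de>2\ka$ and work on $B_{r/2}(x)$ with outer radius $R_0/4$; either choice yields the same conclusion.
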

\begin{proof}
For any $y\in B_{R_0/2}$ and $\rho<R_0/2$, we can derive from Lemma \ref{mono-formu} with $\de=2\ka$ that
\begin{align*}
&\frac{1}{\rho}\int_{B_{\rho}(y)}|\n_Au|^2dx\\
\leq& C_\ka\frac{2}{R_0}\int_{B_{R_0/2}(y)}|\n_A u|^2dx+C_\ka R_0^{2\ka}\int_{B_{R_0/2}(y)}|F_A|^2+|\n V(u)|^2dx\\
\leq &C_\ka\frac{1}{R_0}\int_{B_{R_0}}|\n_A u|^2dx+C_\ka R_0^{2\ka}\int_{B_{R_0}}|F_A|^2+|\n V(u)|^2dx\\
\leq &C_\ka\ep^2_0.
\end{align*}
This implies that for any $x\in B^*_{R_0/2}$,
\[\sup_{B_\rho(y)\subset B_{r}(x)}\(\frac{1}{\rho}|\n_A u|^2dx\)+r^{\ka}\norm{F_A}_{L^2(B_r(x))}\leq C_\ka\ep^2_0.\]
Then we can apply Corollary \ref{ep-es1} with $p=2$ on $B_{r}(x)$ to get the desired estimate \eqref{impr-es-YMH}.
\end{proof}

\subsection{Proof of Theorem~\ref{decay-YMH}: energy bound II}
Now we restate and prove Theorem~\ref{decay-YMH} for YMH fields satisfying energy bound II. The main difference with energy bound I is that we apply Theorem \ref{ep-3D} instead of Lemma~\ref{decay-F1} to get the desired decay estimate of $F_A$.

\begin{thm}\label{t:decay-II}
Let $(A, u)$ be a smooth YMH field on $B^*_{2R_0}$, which satisfies energy bound II:
\begin{equation}\label{en-es-YMH}
\frac{1}{R_0}\int_{B_{R_0}}|\n_A u|^2dx+R_0^{\ka}\(\int_{B_{2R_0}}|F_A|^2+|\n V(u)|^2dx\)^\frac{1}{2}\leq \ep^2_0,
\end{equation}
where $\ka\in (0, \frac{1}{2})$ and $\ep_0$ is the constant from Theorem \ref{ep-3D}. Then there exists a $r_0\leq R_0/2$ such that
\begin{equation*}
  r^2\sqrt{|F_A|^2+1}\leq C_{\ka}(\ep^2_0+r^2_0)(\frac{r}{r_0})^{1+2\al},
\end{equation*}
and
\begin{equation*}
  r\sqrt{|\n_A u|^2 +1}\leq C_\ka\(\ep_0+r_0\)\(\frac{r}{r_0}\)^{\frac{8\al+1}{5}},
\end{equation*}
for any $0<r\leq r_0$, where $\al=\sqrt{\frac{1}{4}-C_\ka(\ep^2_0+r^{2/3}_0)}$. Here the constant $C_\ka$ depends only on the geometry of $N$ and $\ka$.
\end{thm}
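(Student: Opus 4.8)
The plan is to mirror the three-step structure of the proof of Theorem~\ref{t:decay-I}, changing only the first step: under energy bound II the broken Coulomb gauge argument behind Lemma~\ref{decay-F1} is unavailable, so I would instead extract the improved regularity of $F_A$ directly from the improved $\ep$-regularity Theorem~\ref{ep-3D}, whose hypothesis is precisely \eqref{en-es-YMH}. The mismatch between $2R_0$ and $R_0$ in the domains only serves to guarantee that the curvature energy is controlled on balls slightly larger than those on which the monotonicity formula is applied, so a routine covering/rescaling reduces the situation to the hypothesis of Theorem~\ref{ep-3D}.

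\emph{Step 1 (improved regularity of $F_A$).} Applying Theorem~\ref{ep-3D} gives, for every $x\in B^*_{R_0/2}$ with $r=|x|$,
\[ r^2|\n_A u|^2(x)+r^{\frac{3}{2}+\ka}|F_A|(x)\leq C_\ka\ep^2_0. \]
Since $\ka\in(0,\frac{1}{2})$ and $r\le 1$ we have $r^2\le r^{3/2+\ka}$, so this yields at once the crude bound $r^2(|\n_A u|^2+|F_A|)\leq C_\ka\ep^2_0$, playing the role of \eqref{es-YMH*}, and an improved decay for the curvature: writing $t=-\log r$,
\[ e^{-2t}|F_A|\leq C_\ka\ep^2_0\,e^{-(\frac{1}{2}-\ka)t}=C_\ka\ep^2_0\,e^{-2\ka_0 t}, \qquad \ka_0:=\tfrac{1}{4}-\tfrac{\ka}{2}\in\left(0,\tfrac{1}{4}\right). \]
This is exactly the analogue of \eqref{es-cyl2}, now with a strictly positive decay rate $\ka_0$. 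This is the one place where Theorem~\ref{ep-3D}, and through it the monotonicity Lemma~\ref{mono-formu}, does the real work.

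\emph{Steps 2 and 3 (decay of $F_A$, then of $\n_A u$).} With the crude bound and the cylindrical decay in hand, Steps 2 and 3 of the proof of Theorem~\ref{t:decay-I} apply verbatim. I would insert the crude bound into the differential inequalities of Lemma~\ref{diff-ineq-YMH}, so that $\bar{f}=e^{-t/2}f$ and $\bar{g}=e^{-t/2}g$ satisfy, on $(t_0,\infty)\times\mathbb{S}^2$ for a suitably large $t_0$,
\[ \p^2_t\bar{f}+\De_{\mathbb{S}^2}\bar{f}-\al^2\bar{f}\geq 0, \qquad \al=\sqrt{\tfrac{1}{4}-C_\ka(\ep^2_0+r^{2/3}_0)}. \]
The improved curvature estimate bounds $\bar{f}$ by a multiple of $\ep^2_0 e^{(\frac{1}{2}-\ka_0)t}+e^{-t/2}$, and since $\ka_0>0$ forces $\frac{1}{2}-\ka_0-\al<0$ once $\ep_0,r_0$ are small, the ODE comparison argument (comparison function $h$, then $T\to\infty$) gives $r^2\sqrt{|F_A|^2+1}\leq C_\ka(\ep^2_0+r^2_0)(r/r_0)^{1+2\al}$. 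Feeding this back into inequality \eqref{eq4} produces $\p^2_t\bar{g}+\De_{\mathbb{S}^2}\bar{g}-\al^2\bar{g}\geq -Ce^{-4\al(t-t_0)}\bar{g}$, and a second comparison argument with $\de=\frac{1}{8}$ then yields the stated estimate for $\n_A u$.

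\emph{Main obstacle.} Conceptually the whole difficulty has been absorbed into Theorem~\ref{ep-3D}; the only point requiring care in the reduction is the compatibility condition $(\frac{1}{2}-\ka_0)^2\le\al^2$ from \eqref{eq:alpha}, which is exactly what makes the comparison function decay and legitimizes letting $T\to\infty$. Since $\ka\in(0,\frac{1}{2})$ gives $\ka_0\in(0,\frac{1}{4})$, one has $(\frac{1}{2}-\ka_0)^2<\frac{1}{4}$, and choosing $\ep_0$ and $r_0$ small enough (equivalently $t_0$ large) forces $\al^2$ arbitrarily close to $\frac{1}{4}$, so the condition is met. Thus the only genuine difference from the energy bound I case is the source of the improved curvature decay, and everything downstream is identical.
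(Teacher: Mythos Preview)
Your proposal is correct and essentially identical to the paper's own proof: the paper also applies Theorem~\ref{ep-3D} to obtain the pointwise bound $e^{-2t}|\n_A u|^2+e^{-(\frac{3}{2}+\ka)t}|F_A|\leq C_\ka\ep^2_0$, reads off the improved curvature decay with exactly your choice $\ka_0=\tfrac{1}{2}(\tfrac{1}{2}-\ka)$, and then declares Steps~2 and~3 to be the same as in Theorem~\ref{t:decay-I}. Your discussion of the compatibility condition $(\tfrac{1}{2}-\ka_0)^2\le\al^2$ is a helpful elaboration of a point the paper leaves implicit.
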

\begin{proof}
Since the YMH field $(A,u)$ satisfies the energy bound \eqref{en-es-YMH}, then Theorem \ref{ep-3D} implies
\begin{equation}\label{eq:5.4-1}
  e^{-2t}|\n_A u|^2(t,\th)+e^{-(\frac{3}{2}+\ka)t}|F_A|(t,\th)\leq C_\ka\ep^2_0
\end{equation}
with respect to cylinder coordinates on $B^*_{\frac{3}{2}R_0}$.

Then we follow the same arguments as in the proof of Theorem~\ref{t:decay-I}. The only difference is that in step 1, we use \eqref{eq:5.4-1} instead of \eqref{es-cyl1} and \eqref{es-cyl2}, where now we choose $\ka_0=\frac{1}{2}(\frac{1}{2}-\ka)>0$. The rest arguments in step 2 and 3 are the same and we omit the details.

\end{proof}

\subsection{Decay estimates for pure YM fields and harmonic maps}\label{s:decay-pure}

As a byproduct, we also obtain sharp decay estimates for pure YM fields, which are special cases of YMH fields.
\begin{thm}\label{c:pure-YM}
There exists an $\ep_1$ such that if $A$ is a pure YM connection on $B^*_{R_0}$, which satisfies the energy bound
\[\sup_{B_{\rho(y)\subset B_{R_0}}}\(\frac{1}{\rho}\int_{B_{\rho}(y)}|F_A|dx\)\leq \ep^2_1,\]
then
\[r^2|F_A|(x)\leq C\ep^2_1 \(\frac{r}{R_0}\)^{2}\]
for any $r=|x|<R_0/2$.

\end{thm}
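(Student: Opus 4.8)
The plan is to specialize the proof of Theorem~\ref{t:decay-I} to the degenerate case in which the fiber $N$ is a single point, so that the Higgs field disappears: $\n_A u\equiv 0$ and $V\equiv 0$. The YMH system then reduces to the pure Yang--Mills equation $D^*_A F_A=0$, and together with the Bianchi identity $D_A F_A=0$ this makes $F_A$ a harmonic $2$-form. The essential consequence is that the Higgs error terms drop out of both the Kato inequality and the Bochner formula: Lemma~\ref{Kato-ineq-2}(1) with $n=3$ gives $\tfrac32|\n|F_A||^2\le |\n F_A|^2$ with \emph{no} right-hand side, and the Bochner inequality \eqref{ineq2} becomes $\tfrac12\De|F_A|^2\ge |\n_A F_A|^2-C|F_A|^3$.

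First I would invoke the $\ep$-regularity theorem (the pure-curvature case of Lemma~\ref{ep-reg}): the Morrey bound on $|F_A|$ yields $r^2|F_A|(x)\le C\ep_1^2$ for $r=|x|$ small. Next, since $\n_A u\equiv 0$, the inhomogeneous term in the broken-Coulomb-gauge estimate of Lemma~\ref{decay-F1} vanishes, and that lemma applies to give the improved, strictly decaying regularity $r^2|F_A|(x)\le C\ep_1^2\big(r/R_0\big)^{2\ka_0}$ with $\ka_0>0$. This improved bound is the crucial input. Combining the simplified Kato and Bochner inequalities as in the derivation of \eqref{eq1} in Lemma~\ref{diff-ineq-YMH}, the unregularized function $f=|F_A|^{1/2}$ satisfies $\De f\ge -C|F_A|f$.

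Passing to cylindrical coordinates $(t=-\log r,\th)$ and setting $\bar f=e^{-t/2}f$, this reads
\[
\p_t^2\bar f+\De_{\mathbb{S}^2}\bar f-\tfrac14\bar f\ge -Ce^{-2t}|F_A|\,\bar f .
\]
Thanks to the improved regularity the coefficient satisfies $Ce^{-2t}|F_A|\le C\ep_1^2 e^{-2\ka_0(t-t_0)}$, which decays exponentially and is therefore integrable in $t$. Hence the effective potential is exactly $\tfrac14=(\tfrac12)^2$ up to an integrable perturbation. A comparison argument on $[mt_0,T]\times\mathbb{S}^2$ with decaying boundary data, letting $T\to\infty$ exactly as in Steps~2--3 of Theorem~\ref{t:decay-I} (the far-boundary term now behaves like $e^{-\ka_0 T}\to 0$), then gives $\bar f(t,\th)\le C\ep_1 e^{-t/2}$, i.e.\ decay at the optimal rate $\tfrac12$ set by the lowest spherical mode. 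Squaring and rescaling yields $r^2|F_A|=e^{-t}\bar f^2\le C\ep_1^2 e^{-2(t-t_0)}=C\ep_1^2(r/R_0)^2$.

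The main obstacle is obtaining the exponent exactly equal to $2$ rather than the strictly smaller value produced by a naive estimate. If one fed only the $\ep$-regularity bound $e^{-2t}|F_A|\le C\ep_1^2$ into the differential inequality, the potential would become $\tfrac14-C\ep_1^2$, forcing the sub-optimal rate $\sqrt{\tfrac14-C\ep_1^2}<\tfrac12$ and hence exponent $<2$, precisely as in the general YMH case. The gain in the pure Yang--Mills setting is twofold: $\n_A u\equiv 0$ removes the \emph{non-decaying} contribution to the potential, and the broken-Coulomb-gauge estimate of Lemma~\ref{decay-F1} forces the residual coefficient $Ce^{-2t}|F_A|$ to decay exponentially. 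The effective potential is then exactly $\tfrac14$ modulo an integrable perturbation, which is what restores the sharp rate $\tfrac12$ and the clean exponent $2$, parallel to Uhlenbeck's $4$d result.
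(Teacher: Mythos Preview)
Your outline assembles exactly the ingredients the paper uses --- the sharp Kato inequality $\tfrac32|\n|F_A||^2\le|\n_A F_A|^2$ for harmonic $2$-forms, the Bochner formula without Higgs terms, the $\ep$-regularity bound, and the preliminary decay $r^2|F_A|\le C\ep_1^2(r/R_0)^{2\ka_0}$ from Lemma~\ref{decay-F1}. The cylindrical reformulation for $\bar f=e^{-t/2}|F_A|^{1/2}$ is also the paper's.

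There is, however, a genuine gap in your final comparison step. You claim that because the coefficient $p(t):=Ce^{-2t}|F_A|\le C\ep_1^2e^{-2\ka_0(t-t_0)}$ is integrable, a comparison ``exactly as in Steps~2--3 of Theorem~\ref{t:decay-I}'' yields the sharp rate $\bar f\le C\ep_1 e^{-t/2}$ directly. But the natural barrier $h(t)=Ae^{-(t-t_0)/2}+Be^{-(T-t)/2}$ satisfies
\[
\p_t^2h+\De_{\mathbb S^2}h-(\tfrac14-p(t))h=p(t)h\ge 0,
\]
so $h$ is a \emph{sub}solution of the perturbed operator, not a supersolution, and the maximum principle does not give $\bar f\le h$. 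Steps~2--3 of Theorem~\ref{t:decay-I} only produce the rate $\al=\sqrt{\tfrac14-C\ep_1^2}<\tfrac12$, precisely because one must absorb $p(t)$ into the potential; invoking those steps verbatim cannot give $\al=\tfrac12$. The heuristic ``integrable perturbation $\Rightarrow$ sharp rate'' is correct in spirit but is not realized by a single maximum-principle comparison.

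The paper closes this gap by a two-pass bootstrap. First (its Step~1) it uses the \emph{constant} bound $p(t)\le C\ep_1^2$ to run the comparison with exponent $\al<\tfrac12$, using the $\ka_0$-decay only to kill the far-boundary term as $T\to\infty$; this yields $g\le C\ep_1e^{t_0/2}e^{-\al(t-t_0)}$. Then (its Step~2) it feeds this back so that the right-hand side becomes a purely inhomogeneous forcing $-C\ep_1^3e^{t_0/2}e^{-\si(t-t_0)}$ with $\si=1+3\al>\tfrac12$, and compares against
\[
h(t)=2C\ep_1e^{t_0/2}\bigl(e^{-(t-t_0)/2}+e^{-(T-t)/2}\bigr)-C\ep_1^3e^{t_0/2}e^{-\si(t-t_0)},
\]
for which $\p_t^2h-\tfrac14h=-(\si^2-\tfrac14)C\ep_1^3e^{t_0/2}e^{-\si(t-t_0)}$ has the correct sign. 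Letting $T\to\infty$ now gives the sharp rate $\tfrac12$. Your proposal is one inhomogeneous-comparison step short of this.
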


\begin{proof}

The proof is divided into two steps.\

\medskip
\noindent\emph{Step 1: Decay estimate for $F_A$.}\

Since a pure YM connection $A$ can be viewed as a YMH field $(A,u)$ with $V(u)=0$ and $\n_A u=0$, the standard $\ep$-regularity theorem (Theorem~\ref{ep-reg}) gives
\begin{equation}\label{c-decay-YM}
r^2|F_A|\leq C\ep^2_1
\end{equation}
for any $x \in B_{3R_0/4}$. Hence, we can apply Lemma \ref{decay-F1} to get
\begin{equation}\label{c-decay-YM-1}
r^2|F_A|\leq C\ep^2_1\(\frac{r}{R_0}\)^{2\kappa_0}	
\end{equation}
for any $x\in B_{2R_0/3}$ and some $\kappa_0 \in [\frac{1}{16}, 1]$.

On the other hand, since $A$ is a pure YM connection, $F_A$ is now a harmonic 2-form with respect to $D_A$. By Lemma~\ref{Kato-ineq-2} (1), $F_A$ satisfies the Kato inequality
\begin{equation}\label{eq:6-1}
    |\n_A F_A|^2\geq \frac{3}{2}|\n|F_A||^2.
\end{equation}
A simply calculation combining the standard Bochner formula and \eqref{eq:6-1} shows that
\[\De |F_A|^{\frac{1}{2}}\geq -C|F_A||F_A|^\frac{1}{2}.\]
It follows that in cylindrical coordinates, the function $g=e^{-\frac{1}{2}t}|F_A|^{\frac{1}{2}}$ satisfies
\begin{equation}\label{c-eq-YM}
\p^2_t g+\De_{\mathbb{S}^2}g\geq (\frac{1}{4}-Ce^{-2t}|F_A|)g.
\end{equation}
Now inserting \eqref{c-decay-YM} into inequality \eqref{c-eq-YM}, we have
\begin{equation}\label{c-eq-YM-1}
\p^2_t g+\De_{\mathbb{S}^2}g\geq \al^2g,
\end{equation}
where $\al^2=\frac{1}{4}-C\ep^2_1$. Moreover, by estimate \eqref{c-decay-YM-1}, we have $g(t,\th)\leq C\ep_1e^{(\frac{1}{2}-\kappa_0)t}$.

Hence by applying a similar argument as that in step 2 in the proof of Theorem~\ref{t:decay-I}, we get
\begin{align}
g(t,\th)\leq C\ep_1e^{\frac{1}{2}t_0}e^{-\al(t-t_0)}\label{c-decay-FA}
\end{align}
on $[t_0=-\log(R_0/2),+\infty)\times \mathbb{S}^2$.

\medskip
\noindent\emph{Step 2: Sharp decay estimate for $F_A$.}\

The estimate \eqref{c-decay-FA} in step 1 gives
\[e^{-2t}|F_A|\leq C\ep_1e^{-(\al+1/2)(t-t_0)}.\]
Then inserting this decay estimate into inequality \eqref{c-eq-YM}, we find that $g$ satisfies
\[\p^2_t g+\De_{\mathbb{S}^2}g-\frac{1}{4}g\geq -C\ep^3_1e^{\frac{1}{2}t_0}e^{-\si (t-t_0)},\]
where $\si=1+3\al$.

Now, we consider the following comparison function
\begin{align*}
h(t)=&2C\ep_1e^{\frac{1}{2}t_0}\(e^{-\frac{1}{2}(t-t_0)}+e^{-\frac{1}{2}(T-t)}\)-C\ep^3_1e^{\frac{1}{2}t_0}e^{-\si (t-t_0)},
\end{align*}
which satisfies
\[\p^2_t h+\De_{\mathbb{S}^2}h-\frac{1}{4}h\leq -(\si^2-\frac{1}{4})C\ep^3_1e^{\frac{1}{2}t_0}e^{-\si (t-t_0)}\leq C\ep^3_1e^{\frac{1}{2}t_0}e^{-\si (t-t_0)}.\]
The estimate \eqref{c-decay-YM-1} ensure that $g$ satisfies the boundary condition:
\[g(t_0)\leq h(t_0)\quad g(T)\leq h(T).\]

Therefore, by the comparison principle for ODE, we have
\[g(t)\leq 2C\ep_1e^{\frac{1}{2}t_0}\(e^{-\frac{1}{2}(t-t_0)}+e^{-\frac{1}{2}(T-t)}\).\]
Letting $T\to +\infty$, we get
\[|F_A|(t,\th)\leq C\ep^{2}_1e^{2 t_0},\]
and the proof is finished.
\end{proof}

Another byproduct is the sharp decay estimates for 3d harmonic maps.
\begin{thm}\label{c:pure-HM}
There exists an $\ep_2$ such that if $u$ is a smooth harmonic map on $B^*_{R_0}$, which satisfies the energy bound
\[\frac{1}{R_0}\int_{B_{R_0}}|\n u|^2dx\leq \ep^2_2.\]
Then $\n u$ satisfies
\[r|\n u|\leq C\ep_2\frac{r}{R_0}\]
for any $r=|x|\leq R_0/2$.
\end{thm}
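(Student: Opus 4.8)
The plan is to regard the harmonic map $u$ as a YMH field with trivial structure group, so that $A=0$, $F_A=0$ and $V\equiv 0$, and then to imitate the two-step comparison scheme used for pure Yang--Mills fields in Theorem~\ref{c:pure-YM}. The one genuinely new observation is that for harmonic maps the natural rescaled quantity is already \emph{bounded} by the $\ep$-regularity estimate, so—unlike in the Yang--Mills case—no broken Coulomb gauge is needed to manufacture an initial decay.

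First I would upgrade the single-scale hypothesis $\frac{1}{R_0}\int_{B_{R_0}}|\n u|^2\,dx\le\ep_2^2$ to a Morrey bound. Applying the monotonicity formula of Lemma~\ref{mono-formu} with $F_A=0$ and $\n V(u)=0$, so that $\Lambda\equiv 0$, gives $\frac{1}{\rho}\int_{B_\rho(y)}|\n u|^2\,dx\le C\ep_2^2$ for every $B_\rho(y)\subset B_{R_0/2}$, whence the $\ep$-regularity theorem (Lemma~\ref{ep-reg} with $n=3$) yields the pointwise bound $r^2|\n u|^2(x)\le C\ep_2^2$ on $B^*_{R_0/2}$. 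Next, since $u$ is harmonic, $du$ is a harmonic $u^*TN$-valued $1$-form ($D(du)=0$ and $D^*(du)=-\tau(u)=0$), so the sharp Kato inequality of Lemma~\ref{Kato-ineq-1} in the harmonic case with $n=3$ gives $\frac{3}{2}|\n|\n u||^2\le|\n\n u|^2$. Combining this with the Bochner formula \eqref{bch2} (specialised to $F_A=0$, $V=0$) and the elementary identity \eqref{eq-f} with $p=\frac12$, I would derive
\[\De|\n u|^{1/2}\ge -C|\n u|^2\,|\n u|^{1/2}.\]
Passing to cylindrical coordinates $(t=-\log r,\th)$ and setting $g=e^{-t/2}|\n u|^{1/2}$ converts this into
\[\p^2_t g+\De_{\mathbb{S}^2}g\ge\Big(\tfrac14-Ce^{-2t}|\n u|^2\Big)g\]
on $[t_0,\infty)\times\mathbb{S}^2$ with $t_0=-\log(R_0/2)$. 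Here the bound $r^2|\n u|^2\le C\ep_2^2$ translates exactly into $g\le C\sqrt{\ep_2}$, i.e. $g$ is bounded.

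Then I would run the comparison in two steps. Inserting $e^{-2t}|\n u|^2\le C\ep_2^2$ gives $\p^2_t g+\De_{\mathbb{S}^2}g\ge\al^2 g$ with $\al^2=\frac14-C\ep_2^2$; since $g$ is bounded it is $o(e^{\al T})$, so comparing with $h(t)=C\sqrt{\ep_2}\big(e^{-\al(t-t_0)}+e^{-\al(T-t)}\big)$ and letting $T\to\infty$ produces the preliminary decay $g(t,\th)\le C\sqrt{\ep_2}\,e^{-\al(t-t_0)}$. Feeding this back into the inequality, the error obeys $Ce^{-2t}|\n u|^2 g=Cg^5\le C\ep_2^{5/2}e^{-5\al(t-t_0)}$, so that
\[\p^2_t g+\De_{\mathbb{S}^2}g-\tfrac14 g\ge -C\ep_2^{5/2}e^{-5\al(t-t_0)}.\]
Choosing $\ep_2$ small so that $\al$ is near $\frac12$ (in particular $5\al>\frac12$ and $25\al^2>\frac14$), a comparison function of the form $C\sqrt{\ep_2}\big(e^{-\frac12(t-t_0)}+e^{-\frac12(T-t)}\big)-C'\ep_2^{5/2}e^{-5\al(t-t_0)}$ absorbs the inhomogeneity, and letting $T\to\infty$ gives the sharp rate $g(t,\th)\le C\sqrt{\ep_2}\,e^{-\frac12(t-t_0)}$. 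Since $|\n u|=e^t g^2$ and $e^{t_0}=2/R_0$, this reads $|\n u|\le C\ep_2/R_0$, i.e. $r|\n u|\le C\ep_2\,r/R_0$, as desired.

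The main difficulty is conceptual rather than computational and lies in Step~1: one must notice that the harmonic-map scaling makes $g=e^{-t/2}|\n u|^{1/2}$ bounded, whereas the analogue $e^{-t/2}|F_A|^{1/2}$ grows like $e^{t/2}$ in the Yang--Mills case and therefore required Uhlenbeck's broken gauge (Lemma~\ref{decay-F1}) to decay. This boundedness is precisely what forces the boundary term at $t=T$ in the comparison to vanish without any gauge fixing; once the preliminary decay is in hand, the sharp Step~2 comparison proceeds exactly as in the pure Yang--Mills argument.
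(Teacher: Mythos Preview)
Your proposal is correct and follows essentially the same route as the paper's proof. The only cosmetic difference is that you obtain the initial pointwise bound $r|\n u|\le C\ep_2$ by explicitly invoking the monotonicity formula (Lemma~\ref{mono-formu}) followed by Lemma~\ref{ep-reg}, whereas the paper cites Theorem~\ref{ep-3D} directly---but since $F_A=0$ and $V=0$ here, Theorem~\ref{ep-3D} reduces to precisely the combination you describe, so the arguments coincide; the Kato inequality, the differential inequality for $|\n u|^{1/2}$, the cylindrical function $g=e^{-t/2}|\n u|^{1/2}$, and the two-step comparison (preliminary rate $\al$, then sharp rate $\tfrac12$ via the inhomogeneous comparison function) are identical to the paper's.
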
	
\begin{proof}
The proof is divided into two steps.\

\medskip
\noindent\emph{Step 1: Decay estimate for $\n u$.}\

Since a harmonic map $u$ can be interpreted as a YMH field $(A,u)$ with $V(u)=0$ and $A=0$, the $\ep$-regularity theorem \ref{ep-3D} gives
\begin{equation}\label{es-pure-HM}
r|\n u|(x)\leq C\ep_2
\end{equation}
for any $x\in B^*_{R_0/2}$.

On the other hand, since $u$ is a harmonic map, $du$ is a harmonic 1-form with respect to the induced extrinsic derivative $D$, i.e.
\[Ddu=0\quad \text{and} \quad D^*du=0.\]
So, by the Kato inequality in Lemma \ref{Kato-ineq-1}, $du$ satisfies
\begin{equation}\label{eq:6-2}
    |\n d u|^2\geq \frac{3}{2}|\n|\n u||^2.
\end{equation}
Combining the standard Bochner formula and \eqref{eq:6-2}, we get
\begin{equation*}\label{c-eq-du}
\De |\n u|^{\frac{1}{2}}\geq -C|\n u|^2|\n u|^\frac{1}{2}.
\end{equation*}
It follows that in cylindrical coordinates, the function $\tilde{g}=e^{-\frac{1}{2}t}|\n u|^{\frac{1}{2}}$ satisfies
\begin{equation}\label{c-eq-du-1}
\p^2_t \tilde{g}+\De_{\mathbb{S}^2}\tilde{g}\geq(\frac{1}{4}-e^{-2t}|\n u|^2)\tilde{g}\geq (\frac{1}{4}-C\ep^2_2)\tilde{g}=\al^2\tilde{g},
\end{equation}
where $\al^2=\frac{1}{4}-C\ep^2_2$. By \eqref{es-pure-HM}, we also have $\tilde{g}(t,\th)\leq C\ep^\frac{1}{2}_1$ for $t\geq t_0=-\log (R_0/2)$.

Hence by applying a similar argument as that in step 2 in the proof of Theorem~\ref{t:decay-I}, we have
\begin{equation}\label{c-decay-es-du}
\tilde{g}(t,\th)\leq C\ep^\frac{1}{2}_2e^{-\al(t-t_0)}.
\end{equation}

\medskip
\noindent\emph{Step 2: Sharp decay estimate for $\n u$.}\

By \eqref{c-decay-es-du}, we have
\[e^{-2t}|\n u|^2\leq C\ep^2_2e^{4\al(t-t_0)}.\]
Inserting this estimate into inequality \eqref{c-eq-du}, we have
\[\p^2_t\tilde{g}+\De_{\mathbb{S}^2}\tilde{g}-\frac{1}{4}\tilde{g}\geq -C\ep^\frac{5}{2}_2e^{-5\al (t-t_0)}.\]

Now we consider the following comparison function
\[\tilde{h}(t)=2C\ep^{\frac{1}{2}}_2e^{-\frac{1}{2}(t-t_0)}+2C\ep^{\frac{1}{2}}_2e^{-\frac{1}{2}(T-t)}-C\ep^\frac{5}{2}_2e^{-5\al (t-t_0)},\]
which satisfies
\[\p^2_t \tilde{h}+\De_{\mathbb{S}^2}\tilde{h}-\frac{1}{4}\tilde{h}\leq -(25\al^2-\frac{1}{4})C\ep^\frac{5}{2}_2e^{-5\al (t-t_0)}\leq C\ep^\frac{5}{2}_2e^{-5\al (t-t_0)},\]
Moreover, estimate \eqref{c-decay-es-du} ensure that $\tilde{h}$ satisfies the boundary condition:
\[\tilde{g}(t_0)\leq \tilde{h}(t_0)\quad \tilde{g}(T)\leq \tilde{h}(T).\]
Then applying the comparison principle for ODE, we obtain
\[\tilde{g}(t)\leq 2C\ep^{\frac{1}{2}}_2e^{-\frac{1}{2}(t-t_0)}+2\ep^{\frac{1}{2}}_2e^{-\frac{1}{2}(T-t)}.\]
Letting $T\to +\infty$, we get
\[e^{-t}|\n u|(t,\th)\leq C\ep_2e^{-(t-t_0)},\]
and the proof is completed.
\end{proof}

\section{The removability of singularities for YMH fields}\label{s: remov-singu}

In this section, we apply the decay estimates in Theorem~\ref{decay-YMH} to prove the removable singularity theorems for YMH fields, i.e. Theorem \ref{remov-iso-singu} and Corollary \ref{remov-iso-singu1}. 

First we recall the local existence of Coulomb gauge established by P. Smith and K. Uhlenbeck in \cite{SU19}.
\begin{lem}\label{coulomb-gauge}
Let $B_R$ be a ball in $\Real^n$ with $n\geq 3$, $\mathcal{S}\subset B_R$ be a closed set with finite $n-3$ dimensional Hausdorff measure. There is an $\ep>0$ such that for any connection $A$ that is smooth on $B_R\setminus\mathcal{S}$, if for $p>n$ the curvature $F_A$ satisfies
\[\int_{B_R}|F_A|^pdx\leq \ep^pR^{-2p+3},\]
then there exists a gauge transformation $s$ such that $\bar{A}=s^*A\in W^{1,p}_{loc}(B_R)$ and $\bar{A}$ satisfies the following properties.
\begin{itemize}
\item[$(1)$] $\bar{A}$ solves $d^*\bar{A}=0$ in $B_R$,
\item[$(2)$] There exists a constant C depending only on $n$ and $p$ such that
\begin{align*}
	&R^{1-3/p}\norm{\bar{A}}_{L^p(B_{R/2})}+R^{2-3/p}\norm{\n \bar{A}}_{L^p(B_{R/2})}\\
\leq& CR^{2-3/p}\norm{F_{\bar{A}}}_{L^p(B_R)}\leq C\ep.
\end{align*}
\end{itemize}
\end{lem}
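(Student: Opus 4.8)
The plan is to follow Uhlenbeck's gauge-fixing method \cite{U82}, adapted to the presence of the singular set $\mathcal{S}$. First I would normalize by scaling: replacing $A(x)$ by $RA(Rx)$ sends $B_R$ to the unit ball $B_1$ and multiplies the curvature by $R^2$, so the combination $R^{2-3/p}\norm{F_A}_{L^p(B_R)}$ is scale invariant and the hypothesis becomes $\norm{F_A}_{L^p(B_1)}\le\ep$. It thus suffices to prove the statement on $B_1$ and undo the scaling at the end, which reinstates the weights $R^{1-3/p}$ and $R^{2-3/p}$. The analytic engine is the a priori estimate: if $\bar A$ is a $W^{1,p}$ connection with $d^*\bar A=0$ (and the normalization $\iota_\nu\bar A=0$ on $\partial B_1$ that makes the kernel of $d+d^*$ trivial on the ball), then $(d+d^*)\bar A=d\bar A=F_{\bar A}-\tfrac12[\bar A,\bar A]$, and the Hodge–Gaffney elliptic estimate for the first-order operator $d+d^*$ gives $\norm{\bar A}_{W^{1,p}(B_{1/2})}\le C\norm{d\bar A}_{L^p(B_1)}\le C\norm{F_{\bar A}}_{L^p}+C\norm{\bar A}_{W^{1,p}}^2$. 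Here I use $p>n$ so that $W^{1,p}\hookrightarrow L^{2p}\cap C^0$ controls the quadratic commutator. Absorbing the quadratic term, which is legitimate once $\norm{\bar A}_{W^{1,p}}$ is small, yields $\norm{\bar A}_{W^{1,p}(B_{1/2})}\le C\norm{F_{\bar A}}_{L^p(B_1)}$, i.e. estimate $(2)$.

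With this estimate in hand I would treat the smooth case $\mathcal{S}=\emptyset$ by the continuity method along the path $A_t=tA$, $t\in[0,1]$. Let $I\subset[0,1]$ consist of those $t$ for which $A_t$ admits a Coulomb gauge $\bar A_t$ obeying the a priori bound; then $0\in I$ trivially. Openness follows from the implicit function theorem: writing a nearby gauge as $s=\exp(\xi)$, the Coulomb condition linearizes to $d^*D_{\bar A}\xi=0$, and $d^*D_{\bar A}=\De+\text{l.o.t.}$ is a small perturbation of the invertible scalar Laplacian on $\g$-valued functions, so the gauge can be corrected for nearby $t$. Closedness is exactly the a priori estimate: along $t_k\to t_\infty$ the uniform bound $\norm{\bar A_{t_k}}_{W^{1,p}}\le C\ep$ extracts a weak $W^{1,p}$ limit that is again a Coulomb gauge for $A_{t_\infty}$. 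Hence $I=[0,1]$, and $t=1$ produces the gauge on the whole ball, the smallness of $\norm{\bar A}_{W^{1,p}}$ being maintained throughout because it is dominated by $\norm{F}_{L^p}\le\ep$.

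It remains to allow the singular set $\mathcal{S}$. The crucial point is that $\mathcal{H}^{n-3}(\mathcal{S})<\infty$ forces $\mathcal{H}^{n-1}(\mathcal{S})=0$, so $\mathcal{S}$ is Lebesgue-null and removable for bounded $W^{1,p}$ functions. I would run the smooth construction on domains exhausting $B_1\setminus\mathcal{S}$ (equivalently, after excising a shrinking neighborhood of $\mathcal{S}$), obtaining gauges $\bar A$ that are smooth off $\mathcal{S}$ and satisfy the uniform bound $\norm{\bar A}_{W^{1,p}}\le C\ep$, since the a priori estimate only sees $\norm{F_A}_{L^p(B_1)}$, which remains finite across $\mathcal{S}$. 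Because $p>n$, the uniform $W^{1,p}$ bound yields a uniform Hölder bound, so $\bar A$ extends continuously across the null set $\mathcal{S}$; the standard removability lemma then shows the distributional gradient acquires no singular part on $\mathcal{S}$, giving $\bar A\in W^{1,p}_{loc}(B_1)$ with $d^*\bar A=0$ holding distributionally on all of $B_1$. Elliptic regularity for the coupled system $d^*\bar A=0$, $d\bar A=F_{\bar A}-\tfrac12[\bar A,\bar A]$ then restores smoothness of $\bar A$ away from $\mathcal{S}$, and rescaling recovers the stated bounds.

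I expect the main obstacle to be the interaction between the nonlinear gauge fixing and the singular set: one must construct the Coulomb gauge with constants uniform with respect to the exhaustion of $B_1\setminus\mathcal{S}$, and then justify that $\bar A$ genuinely extends across $\mathcal{S}$ as a $W^{1,p}$ connection solving the Coulomb equation globally, with no curvature or gauge potential concentrating on $\mathcal{S}$. The threshold $\ep$ is dictated precisely by the absorption of the quadratic commutator in the a priori estimate, while the codimension-three hypothesis on $\mathcal{S}$ is exactly what renders it removable for the bounded, $p>n$ gauge potential; both must be tracked carefully to close the argument.
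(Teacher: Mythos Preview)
The paper does not prove this lemma at all; it simply quotes it from Smith and Uhlenbeck~\cite{SU19} as a black box, so there is no ``paper's own proof'' to compare against. Your outline of the smooth case ($\mathcal{S}=\emptyset$) is the standard Uhlenbeck continuity argument and is correct in spirit.

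However, your treatment of the singular set has a genuine gap. You propose to ``run the smooth construction on domains exhausting $B_1\setminus\mathcal{S}$,'' but Uhlenbeck's continuity method is set up on a ball: the boundary condition $\iota_\nu\bar A=0$, the triviality of the Hodge kernel, and the implicit-function step all use that the domain is simply a ball. Once you excise a neighbourhood of $\mathcal{S}$ you are working on a domain with extra boundary components and possibly nontrivial topology, and the argument as written does not apply there; nor do you explain how the gauges produced on different exhaustion levels are related to one another so that a limit gauge can be extracted. You correctly flag this as ``the main obstacle,'' but you do not actually overcome it. The proof in \cite{SU19} does not proceed by naive exhaustion: one first has to produce \emph{some} gauge in which $A$ lies in a Sobolev space across $\mathcal{S}$ (this is where the codimension hypothesis $\mathcal{H}^{n-3}(\mathcal{S})<\infty$ is used in an essential way, via covering arguments and the fact that such sets are removable for the relevant capacities), and only then apply the Coulomb gauge-fixing on the full ball to that Sobolev connection. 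Your sketch reverses this order, assuming the a~priori $W^{1,p}$ bound is available on the exhausted domains with constants independent of the exhaustion, which is precisely what needs to be proved.
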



\begin{proof}[Proof of Theorem \ref{remov-iso-singu}]


Since $(A,u)$ satisfies energy bounds \eqref{Mor-c0} or \eqref{Mor-c1}, by applying decay estimate \eqref{main-theorem-curvature} for $F_A$ in Theorem~\ref{decay-YMH} with $\ep_0$ and $r_0$ both small enough, we have
\begin{equation}\label{small-FA}
\int_{B_{r_0}}|F_A|^pdx\leq \ep^pr_0^{-2p+3}
\end{equation}
where we choose $p>6$ for convenience and $\ep$ is from Lemma \ref{coulomb-gauge}. Then by Lemma \ref{coulomb-gauge}, there exists a gauge transformation $s$ on $B^*_{r_0}$ such that under this gauge transformation we have
\begin{equation}\label{eq-A}
d^*A_1=0 \quad\text{in}\,\, B_{r_0},
\end{equation}
\begin{equation}\label{es-A}
\norm{A_1}_{W^{1,p}(B_{r_0/2})}\leq C(p,r_0)\norm{F_{A_1}}_{L^p(B_{r_0})},
\end{equation}
where $A_1=s^*A=s^{-1}ds+s^{-1}As$.

Moreover, decay estimate \eqref{main-theorem-section} for $\n_Au$ in Theorem~\ref{decay-YMH} implies that
\begin{equation}\label{es-u1}
\norm{u_1}_{W^{1,p}(B_{r_0/2})}\leq C(r_0),
\end{equation}
where we set $u_1=s^*u=s^{-1}\cdot u$.

On the other hand, using \eqref{eq-A}, we know from equation \eqref{eq-YMH3} that the YMH field $(A_1,u_1)$ satisfies the following extrinsic equation on $B_{r_0/2}$
\begin{equation}\label{eq-YMH-extrinsic}
\begin{cases}
\De A_1+[A_1,dA_1]+[A_1,[A_1,A_1]]=-u^{*}_1(\n_{A_1}u_1),\\
\Delta u_1+2A_1du_1+A_1(A_1u_1)=\Ga(u_1)(\n_{A_1} u_1,\n_{A_1} u_1)+\n V(u_1).
\end{cases}
\end{equation}
Then, by applying estimates \eqref{es-A}-\eqref{es-u1}, we can derive form equation \eqref{eq-YMH-extrinsic} that
\[(\De A_1, \De u_1)\in L^\frac{p}{2}_{loc}(B_{r_0/2}).\]
Hence the $L^p$-estimates for elliptic equations gives
\[(A_1,u_1)\in W^{2,\frac{p}{2}}_{loc}(B_{r_0/2})\hookrightarrow C^{1,1-\frac{6}{p}}_{loc}(B_{r_0/2}).\]

It follows that the singularity at the origin is removable and the fiber extends to the ball $B_1$. Then a standard bootstrap argument implies that $(A_1, u_1)$ is actually smooth.

\end{proof}

Finally, Corollary \ref{remov-iso-singu1} follows directly from H\"older's inequality and Theorem \ref{remov-iso-singu}, and we omit the proof.

\section*{Acknowledgements}
B. Chen is partially supported by NSFC (Grant No. 12301074) and Guangzhou Basic and Applied Basic Research Foundation (Grant No. 2024A04J3637). C. Song is partially supported by NSFC no. 12371061 and Natural Science Foundation of Fujian Province of China No. 2021J06005.


\end{document}